\newtheorem{theorem}{Theorem}[section]
\newtheorem{definition}[theorem]{Definition}
\newtheorem{proposition}[theorem]{Proposition}
\newtheorem{corollary}[theorem]{Corollary}
\newtheorem{lemma}[theorem]{Lemma}
\newtheorem{remark}[theorem]{Remark}
\newtheorem{example}[theorem]{Example}
\newcommand{\cali}[1]{\mathscr{#1}}
\newcommand{\Leb}{{\rm Leb}}
\newcommand{\Har}{{\rm Har}}
\newcommand{\dist}{{\rm dist}}
\newcommand{\dbar}{{\overline\partial}}
\newcommand{\ddbar}{{\partial\overline\partial}}
\newcommand{\GL}{{\rm GL}}
\newcommand{\id}{{\rm id}}
\newcommand{\pr}{{\rm pr}}
\newcommand{\hol}{{\rm hol}}
\newcommand{\Mb}{\mathop{\mathbf{M}}\nolimits}
\newcommand{\mb}{\mathop{\mathbf{m}}\nolimits}
\newcommand{\esup}{{\rm ess.\ sup}}
\newcommand{\bfp}{{\rm \bf p}}
\newcommand{\Ac}{\cali{A}}
\newcommand{\Bc}{\cali{B}}
\newcommand{\Cc}{\cali{C}}
\newcommand{\Ec}{\cali{E}}
\newcommand{\Lc}{\cali{L}}
\renewcommand{\Mc}{\cali{M}}
\newcommand{\Sc}{\cali{S}}
\newcommand{\C}{\mathbb{C}}
\newcommand{\D}{\mathbb{D}}
\newcommand{\K}{\mathbb{K}}
\newcommand{\N}{\mathbb{N}}
\newcommand{\Z}{\mathbb{Z}}
\newcommand{\R}{\mathbb{R}}
\newcommand{\T}{\mathbb{T}}
\newcommand{\B}{\mathbb{B}}
\newcommand{\U}{\mathbb{U}}
\renewcommand{\P}{\mathbb{P}}
\newcommand{\E}{\mathbb{E}}
\title{Geometric characterization of Lyapunov exponents for  Riemann surface   laminations}
\author{ Vi{\^e}t-Anh Nguy{\^e}n}
\dedicatory{Dedicated to the memory of Gennadi M. Henkin}
\begin{document}

\maketitle

\begin{abstract}
  We     characterize  geometrically  the  Lyapunov exponents of a   cocycle (of arbitrary  rank) with respect to 
a    harmonic  current defined  on a  hyperbolic  Riemann surface  lamination. Our  characterizations are formulated
in terms of  the expansion rates of the  cocycle  along geodesic rays.
%
  \end{abstract}

\noindent
{\bf Classification AMS 2010:} Primary: 37A30, 57R30;  Secondary: 58J35, 58J65, 60J65.

\noindent
{\bf Keywords:} Riemann surface lamination,    harmonic current,   cocycle, Lyapunov exponents,  geodesic ray, expansion rate.


 \section{Introduction} \label{intro}

 
 The present work is motivated  by the  interplay  between geometry, topology and  dynamics   in the theory of hyperbolic Riemann  surface laminations. 
This class of laminations has been  extensively  studied by numerous  authors from  different viewpoints and using  various methods.
The  reader is invited  to  consult  the surveys by  Forn\ae ss-Sibony \cite {FornaessSibony2} and 
by Ghys \cite{Ghys} as  well as the references therein
for  a recent account on this  subject.

To start with  we  fix  some notation and terminology. We refer the  reader to Definition \ref{D:hyperbolic} below for the notion of  hyperbolic  Riemann  surface laminations.
Throughout this  work $\D$   denotes the unit disc in $\C$ and
  $g_P$ is  the Poincar\'e metric on $\D,$ given by
$$ g_P(\zeta):={2\over (1-|\zeta|^2)^2} \;i d\zeta\wedge d\overline\zeta,\qquad\zeta\in\D, \qquad\text{where}\qquad i:=\sqrt{-1}.  $$
Let $(X,\Lc)$ be   a hyperbolic  Riemann  surface lamination. We emphasize that   $X$ is not necessarily compact.
For any point $x\in X,$  let $L_x$  be the   leaf passing  through $x$ and  consider a universal covering map
\begin{equation}\label{e:covering_map}
\phi_x:\ \D\rightarrow L_x\qquad\text{such that}\  \phi_x(0)=x.
\end{equation}
 This map is
uniquely defined by $x$ up to a rotation on $\D$. 
Then, by pushing   forward  the Poincar\'e metric $g_P$
on $\D$  
  via $\phi_x,$ we obtain the  so-called {\it Poincar\'e metric} on $L_x$ which depends only on the leaf.  The latter metric is given by a positive $(1,1)$-form on $L_x$  that we also denote by $g_P$ for the sake of simplicity.  A subset $M\subset X$  is  called
  {\it leafwise  saturated} if $x\in M$ implies  $L_x\subset M.$

 To  the lamination $(X,\Lc)$ we  associate  several objects of different  nature.
 On the  dynamical  side, consider  the  sample-path space $\Omega$  which describes the leafwise Brownian motion. Namely,
 let  $\Omega:=\Omega(X,\Lc) $  be  the space consisting of  all continuous  paths  $\omega:\ \R^+:=[0,\infty)\to  X$ with image fully contained  in a  single   leaf. 
Consider  the  semi-group $(\sigma_t)_{t\in\R^+}$ of shift-transformations 
  $\sigma_t:\  \Omega\to\Omega$ defined for  all $t,s\in\R^+$ by 
$$   \sigma_t(\omega)(s):=\omega(s+t),\qquad  \omega\in \Omega.$$
For $x\in X,$ let  $\Omega_x$ be the  subspace  consisting of all  paths $\omega$ in $\Omega$ starting from $x,$ i.e., $\omega(0)=x.$ 
We endow $\Omega_x$ with a canonical  probability measure: the {\it Wiener measure} $W_x$  with respect  to the  metric $g_P$ on $L_x$ (see Subsection \ref{ss:Wiener} below).

 As  objects of   topological nature, we  deal  with  (multiplicative)  cocycles  which have been   introduced 
in a previous work \cite{NguyenVietAnh1}.  Prior to their  formal definition,   
 we make  the following convention.   Throughout the article,  $\K$ denotes either $\R$ or $\C.$
Moreover,  given any  integer $d\geq 1,$  $\GL(d,\K)$   denotes  the general linear group of degree $d$ over $\K$
and $\P^d(\K)$ denotes the  $\K$-projective space of dimension $d.$
\begin{definition}\label{D:cocycle}\rm
 A {\it $\K$-valued   cocycle} (of rank $d$)   is
  a   map  
$\mathcal{A}:\ \Omega\times \R^+ \to  \GL(d,\K)      $
such that\\  
(1)  ({\it identity law})  
$\mathcal{A}(\omega,0)=\id$  for all $\omega\in\Omega ;$\\
(2) ({\it homotopy law}) if  $\omega_1,\omega_2\in \Omega_x$ and $t_1,t_2\in \R^+$ such that 
     $\omega_1(t_1)=\omega_2(t_2)$
and $\omega_1|_{[0,t_1]}$ is  homotopic  to  $\omega_2|_{[0,t_2]}$ (that is, the path $\omega_1|_{[0,t_1]}$ can be  deformed  continuously on  $L_x$ to the path  $\omega_2|_{[0,t_2]},$ the two endpoints of $\omega_1|_{[0,t_1]}$  being kept fixed  during the deformation), then 
$$
\mathcal{A}(\omega_1,t_1)=\mathcal{A}(\omega_2,t_2);
$$
(3) ({\it multiplicative law})    $\mathcal{A}(\omega,s+t)=\mathcal{A}(\sigma_t(\omega),s)\mathcal{A}(\omega,t)$  for all  $s,t\in \R^+$ and $\omega\in \Omega;$\\
(4) ({\it measurable law})  the {\it local expression} of $\mathcal{A}$ on each  laminated  chart is   Borel measurable
(see  Subsection \ref{ss:background} below  for the  definition of  local  expressions).
 \end{definition}
 It is  worthy noting that the  cocycles of rank $1$  have been  investigated  by several  authors (see, for example, Candel \cite{Candel2}, Deroin \cite{Deroin}, etc).   
 The  holonomy cocycle (or equivalently the normal  derivative cocycle)  of the  regular part of a $n$-dimensional (possibly singular) holomorphic  foliation  by hyperbolic Riemann  surfaces  provides a typical   example of  $\C$-valued cocycles of rank $n-1.$
Another  source of cocycles  are those associated  with foliations  which are obtained  from suspensions. These   cocycles   capture  the topological aspect of the considered  foliations. Moreover, 
we can produce  new  cocycles from the old ones by performing some basic operations such as the wedge  product and the tensor product
(see \cite[Section 3.1]{NguyenVietAnh1}). 
In  this  article,  we  are mainly concerned  with cocycles which  behave in a tempered manner 
relative to the   metric $g_P.$ More concretely, we will introduce in Definition
 \ref{D:moderate_cocycle} below   two large families of cocycles: the  moderate  cocycles and 
 the   H\"older ones. 
 
 The objects of   geometric nature  considered  here are       {\it harmonic   currents} given by Garnett \cite{Garnett}  which are  generalizations of the  {\it foliations cycles} previously 
introduced by  Sullivan  \cite{Sullivan}. When $X$ is compact, the  existence  of     non-zero  harmonic currents $T$ has been  established by
 Garnett \cite{Garnett}. The case when $X$ is  non compact  has  been  investigated by  Berndtsson-Sibony and  Forn\ae ss-Sibony \cite{BerndtssonSibony,FornaessSibony2}. 
To   a   non-zero  harmonic current $T$  we  associate a Borel positive  measure
\begin{equation}\label{E:harmonic_measure}
\mu=\mu_T:=T\wedge g_P,
\end{equation}
which is  also  a {\it harmonic  measure} in  good  cases (for  example, when $X$ is  compact, see Proposition \ref{P:moderate_criterion} below).
 The notion  of harmonic  measures  and harmonic  currents  will be  recalled in    Definition  \ref{D:harmonic_measure} and \ref{D:harmonic_current}.
The  following terminology  will be  repeatedly used in this  article.
Given a  positive finite measure space $(S,\Sc,\nu),$  a  set $A\in \Sc$  is  called {\it of full $\nu$-measure}
if $\nu(S\setminus A)=0.$
In what follows, 
we say that a  set $A\subset X$ is  {\it of full $T$-measure} for a harmonic  current $T$ if it is  of full $\mu$-measure,
where $\mu$ is  given in  (\ref{E:harmonic_measure}).



 
 In this  setting, using our    recent  work  \cite{NguyenVietAnh1} 
 we  obtain the   following Oseledec   multiplicative ergodic theorem
which relates different objects of  different natures.   
  \begin{theorem} \label{T:VA}  
 Let $(X,\Lc)$ be  a  $\Cc^2$-smooth hyperbolic Riemann surface lamination and $T$  a harmonic current. 
 Let 
  $\mu$ be the measure associated to $T$ by (\ref{E:harmonic_measure}).
Assume, moreover,  that  $T$ is  extremal  in the cone of all   harmonic  currents and that $\mu$  is a (finite)  harmonic measure.
Consider  a   moderate cocycle
$\mathcal{A}:\ \Omega\times \R^+ \to  \GL(d,\K)  .    $  
Then  there exist  a leafwise saturated  Borel  set $Y\subset X$ of  full $\mu$-measure  and a number $m\in\N$  together with $m$ integers  $d_1,\ldots,d_m\in \N$  such that
the following properties hold:
\begin{itemize}
\item[(i)] For   each $x\in Y$  
 there   exists a  decomposition of $\K^d$  as  a direct sum of $\K$-linear subspaces 
$$\K^d=\oplus_{i=1}^m H_i(x),
$$
 such that $\dim H_i(x)=d_i$ and  $\mathcal{A}(\omega, t) H_i(x)= H_i(\omega(t))$ for all $\omega\in  \Omega_x$ and $t\in \R^+.$   
Moreover,  $x\mapsto  H_i(x)$ is   a  measurable map from $  Y $ into the Grassmannian of $\K^d.$
For each $1\leq i\leq m$ and each $x\in Y,$ let $V_i(x):=\oplus_{j=i}^m H_j(x).$  Set $V_{m+1}(x)\equiv \{0\}.$
\item[
(ii)]  There   are real numbers 
$$\chi_m<\chi_{m-1}<\cdots
<\chi_2<\chi_1,$$
  and   for  each $x\in Y,$ there is a set $F_x\subset \Omega_x$ of full $W_x$-measure such that for every $1\leq i\leq m$ and  every  $v\in V_i(x)\setminus V_{i+1}(x)$
 and every  $\omega\in F_x,$
\begin{equation}
\label{e:Lyapunov}
\lim\limits_{t\to \infty, t\in \R^+} {1\over  t}  \log {\| \mathcal{A}(\omega,t)v   \|\over  \| v\|}  =\chi_i.    
\end{equation}
Moreover, 
\begin{equation}
\label{e:Lyapunov_max}
\lim\limits_{t\to \infty, t\in \R^+} {1\over  t}  \log {\| \mathcal{A}(\omega,t)  \|}  =\chi_1    
\end{equation}
 for  each $x\in Y$  and for   every  $\omega\in F_x.$ 
\end{itemize}
Here    $\|\cdot\|$  denotes the standard   Euclidean norm of $\K^d.$  
 \end{theorem}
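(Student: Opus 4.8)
\emph{Proof strategy.} The plan is to lift $\mathcal{A}$ to the sample-path space $\Omega$, where it becomes a genuine matrix-valued multiplicative cocycle over a measure-preserving semiflow, apply the Oseledec multiplicative ergodic theorem there, and then transfer the conclusions back to $X$. First I would put on $\Omega$ the probability measure $\bar\mu:=\int_X W_x\,d\mu(x)$ obtained by integrating the Wiener measures against $\mu$. The hypothesis that $\mu$ is a harmonic measure is precisely what makes $\bar\mu$ invariant under the shift semigroup $(\sigma_t)_{t\in\R^+}$: harmonicity says that $\mu$ is stationary for the leafwise heat-diffusion semigroup $(D_t)_{t\ge0}$, and $D_t$ is the time-$t$ marginal of $\sigma_t$ acting on $\bar\mu$, so $(\sigma_t)_*\bar\mu=\bar\mu$. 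Next, extremality of $T$ in the cone of harmonic currents translates into ergodicity of $(\Omega,\bar\mu,\sigma_t)$; this is the lamination counterpart of the classical fact that extremal stationary measures give ergodic Markov dynamics, and it is available from \cite{NguyenVietAnh1} and Garnett \cite{Garnett}.

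Second, axioms (1)--(4) of Definition \ref{D:cocycle} make $\mathcal{A}(\cdot,t)\colon\Omega\to\GL(d,\K)$ a measurable matrix cocycle over $\sigma_t$: the multiplicative law gives $\mathcal{A}(\omega,s+t)=\mathcal{A}(\sigma_t(\omega),s)\mathcal{A}(\omega,t)$, while measurability of the local expressions on laminated charts together with the homotopy law yields global Borel measurability on $\Omega$. The role of moderateness is to supply the Oseledec integrability bound
\[
\int_\Omega \sup_{0\le t\le 1}\big(\log^+\|\mathcal{A}(\omega,t)\|+\log^+\|\mathcal{A}(\omega,t)^{-1}\|\big)\, d\bar\mu(\omega)<\infty,
\]
which holds because the Poincar\'e metric controls the speed of the leafwise Brownian motion and a moderate cocycle grows slowly enough relative to the tails of $W_x$. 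Granting this, I would pass to the time-one map $\sigma_1$ with the discrete cocycle $\omega\mapsto\mathcal{A}(\omega,1)$, apply the Oseledec theorem there, and interpolate from $t\in\N$ to $t\in\R^+$ by means of the same $\sup_{0\le t\le1}$ estimate. To get a genuine splitting rather than just a decreasing filtration I would pass to the natural extension of $(\Omega,\bar\mu,\sigma_t)$ — realized concretely by two-sided leafwise Brownian paths, using the reversibility of the $\mu$-Brownian motion — and intersect the forward and backward filtrations. This produces the numbers $\chi_m<\cdots<\chi_1$, the integers $d_1,\dots,d_m$, and, over a shift-invariant set of full $\bar\mu$-measure, a measurable decomposition $\K^d=\oplus_{i=1}^m H_i(\omega)$ along which (\ref{e:Lyapunov}) and (\ref{e:Lyapunov_max}) hold for $\bar\mu$-a.e.\ $\omega$.

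Third, I would show that the Oseledec data in fact depend only on the base point $x=\omega(0)$ and are invariant under \emph{every} path issuing from $x$, not merely $W_x$-almost every one. Here the homotopy law is decisive: on $\Omega_x$ the matrix $\mathcal{A}(\omega,t)$ depends only on the homotopy class of $\omega|_{[0,t]}$, hence factors through the holonomy of $L_x$. Ergodicity of the induced dynamics on a transversal forces $H_i(\omega)$ to agree for $W_x$-a.e.\ $\omega$, so one may define $H_i(x)$ to be this common value on a leafwise-saturated Borel set $Y$ of full $\mu$-measure, with $x\mapsto H_i(x)$ measurable by a measurable-selection argument. Then for an arbitrary $\omega\in\Omega_x$ and any $t\ge0$, concatenating $\omega|_{[0,t]}$ with a $W_{\omega(t)}$-generic path from $\omega(t)$ and invoking the multiplicative law yields $\mathcal{A}(\omega,t)H_i(x)=H_i(\omega(t))$, where $\omega(t)\in Y$ since $Y$ is leafwise saturated; the growth identities (\ref{e:Lyapunov})--(\ref{e:Lyapunov_max}) descend the same way. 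The main obstacle is not the application of Oseledec but the two ergodic-theoretic inputs — shift-invariance and ergodicity of $(\Omega,\bar\mu,\sigma_t)$ from harmonicity and extremality, together with the upgrade of the splitting from ``$\bar\mu$-a.e.\ $\omega$'' to ``all $\omega\in\Omega_x$'' via the homotopy law — and the integrability bound for moderate cocycles; all of these rest on \cite{NguyenVietAnh1}.
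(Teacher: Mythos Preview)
Your proposal is correct and follows essentially the same route as the paper: verify the Oseledec integrability bound from moderateness (the paper does this explicitly in Lemma~\ref{L:sup_integrability} via the tail estimate $W_0\{\sup_{[0,1]}\dist_P(\omega(s),0)>r\}\lesssim e^{-r^2/64}$), deduce ergodicity of $\mu$ from extremality of $T$ (Theorem~\ref{thm_harmonic_currents_vs_measures}(iii)), and then invoke the general Oseledec theorem for laminations from \cite{NguyenVietAnh1}. The only difference is presentational: the paper treats \cite[Corollary~3.8 and Theorem~7.3]{NguyenVietAnh1} as a black box and checks its hypotheses in a few lines, whereas you unpack that box and sketch the $\bar\mu$/shift/natural-extension machinery that lives inside it.
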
  
The  above  result  is  the counterpart, in the context of hyperbolic Riemann surface laminations,
of the classical  Oseledec   multiplicative ergodic theorem for maps   (see \cite{KatokHasselblatt,Oseledec}).
  
 Assertion  (i) above says that the  Oseledec  decomposition exists for all points $x$ in  a leafwise saturated Borel  set
of  full $\mu$-measure and that this  decomposition is holonomy invariant.
It is  worthy noting that the Oseledec  decomposition  in  (i)  depends only on $x\in Y,$ in particular, it 
does not depend on paths $\omega\in\Omega_x.$

The decreasing  sequence  of  subspaces  of $\K^d$ given by assertion (i):
$$
\{0\}\equiv V_{m+1}(x)\subset V_m(x)\subset \cdots \subset V_1(x)=\K^d
$$
is  called the {\it Lyapunov filtration} associated to $\mathcal A$  at a given point $x\in Y.$ 
 
 The   numbers  $\chi_m<\chi_{m-1}<\cdots
<\chi_2<\chi_1$ given by  assertion (ii) above are called  the {\it  Lyapunov exponents} of the cocycle $\mathcal{A}$ with respect to the current $T.$
  Moreover, we infer from formulas (\ref{e:Lyapunov}) and (\ref{e:Lyapunov_max}) above  that these characteristic numbers measure heuristically the  expansion rate of
  $\mathcal A$ along different vector-directions $v$ and  along leafwise Brownian trajectories.
In other words,  the stochastic formulas (\ref{e:Lyapunov})-(\ref{e:Lyapunov_max}) only express the  dynamical  character  of the Lyapunov exponents.



The main purpose of this  work is  to find  a  geometric  interpretation  of these  characteristic  quantities.
 Our approach  consists    in replacing the Brownian trajectories by the more appealing objects, namely, the {\it     unit-speed   geodesic rays.}  These paths   are  parameterized  by  their  length (with respect to the leafwise Poincar\'e metric).   Therefore, we characterize 
 the Lyapunov exponents  in terms of  the  expansion rates of $\mathcal A$ along the geodesic rays.
 %

For this  purpose we need to introduce some more  notation and  terminology.
Denote by
$r\D$ the disc in $\C$ of center $0$ and of radius $r$ with $0<r<1$.   In the
Poincar{\'e} disc $(\D,\omega_P),$ 
$r\D$ is also the disc of center 0 and of radius 
\begin{equation}\label{e:change_radius}
R:=\log{1+r\over 1-r}\cdot
\end{equation}
So, we will also denote by $\D_R$ this disc and by $\partial \D_R$ its  boundary.
Conversely, for each $R>0$  we  denote by $r_R$  the unique number $0<r<1$ satisfying the above equation, 
  that is,  $r_R\D=\D_R .$
 
 Recall from (\ref{e:covering_map}) that
 $(\phi_x)_{x\in X}$ is  a given family of  universal covering maps $\phi_x:\ \D\to L_x$  with $\phi_x(0)=x.$
 For every $x\in X,$  
  the  set of all unit-speed geodesic rays $\omega:\ [0,\infty)\to L_x$ starting at $x$ (that is,    $\omega(0)=x$),     can  be described   by  the family  $(\gamma_{x,\theta})_ {\theta\in [0,1)},$ where
 \begin{equation}\label{eq_geodesics}
 \gamma_{x,\theta}(R):=  \phi_x(e^{2\pi i\theta}r_R),\qquad  R\in\R^+.
 \end{equation}
 The path $\gamma_{x,\theta}$ is called the {\it  unit-speed geodesic ray}   at $x$ with the leaf-direction $\theta.$
 Unless  otherwise  specified, the {\it space of leaf-directions} $[0,1)$ is  endowed with the Lebesgue measure. 
 The space of leaf-directions  is  visibly identified, via the map $[0,1)\ni\theta\mapsto e^{2\pi i\theta},$ with the unit circle $\partial \D$ endowed with the normalized rotation measure.

 In order to state  our main results,  the following   notions of expansion  rates for  cocycles  are  needed.
 
  \begin{definition}\label{D:expansion_rate}\rm
 Let $\mathcal A$ be a $\K$-valued   cocycle and  $R>0$  a  time.

  The {\it expansion rate}  of $\mathcal A$  at a  point $x\in X$ in the leaf-direction $\theta$  at time  $R$ along the vector $v\in \K^d\setminus\{0\}$ 
 is the number
 $$\Ec(x,\theta,v,R):={1\over  R}  \log {\| \mathcal A(\gamma_{x,\theta},R)v   \|\over  \| v\|}.$$ 
 
 The {\it  expansion rate}  of $\mathcal A$   at a  point $x\in X$ in the leaf-direction $\theta$ 
  at time  $R$
 is
 \begin{equation*}\begin{split}
 \Ec(x,\theta,R):= \sup\limits_{v \in \K^d\setminus\{0\}} \Ec(x,\theta,v,R)&=\sup_{v \in \K^d\setminus\{0\} } {1\over  R}  \log {\| \mathcal A(\gamma_{x,\theta},R)v   \|\over  \| v\|}\\
 &=
 {1\over  R}  \log {\| \mathcal A(\gamma_{x,\theta},R)  \|}.
 \end{split}
 \end{equation*}

 Given a $\K$-vector subspace   $\{0\}\not=H\subset \K^d,$ the {\it expansion rate}  of $\mathcal A$  at a  point $x\in X$ at time  $R$
 along the vector space  $H$  
 is the interval $\Ec(x,H,R):=[a,b],$ where
 $$
 a:=  \inf_{v \in H\setminus \{0\}} \int_0^1   \Big ({1\over  R}  \log {\| \mathcal A(\gamma_{x,\theta},R)v   \|\over  \| v\|}\Big) d\theta\ \text{and}\
 b:=  \sup_{v \in H\setminus \{0\}} \int_0^1   \Big ({1\over  R}  \log {\| \mathcal A(\gamma_{x,\theta},R)v   \|\over  \| v\|}\Big) d\theta.
 $$
 \end{definition} 
  Notice  that    $ 
 \Ec(x,\theta,v,R) $ (resp. $
 \Ec(x,\theta,R)$)  expresses  geometrically the  expansion rate (resp. the maximal expansion rate) of the  cocycle 
when one travels along the unit-speed geodesic ray $\gamma_{x,\theta}$ up to time $R.$  
  On the other hand, $ 
 \Ec(x,H,R) $   represents the  smallest closed interval which  contains all numbers
$$\int_0^1   \Big ({1\over  R}  \log {\| \mathcal A(\gamma_{x,\theta},R)v   \|\over  \| v\|}\Big) d\theta,
$$
where  $v$ ranges over $H\setminus \{0\}.$
Note that  the above integral is the average of  the expansion rate  of the  cocycle 
when one travels along the unit-speed geodesic rays along the vector-direction $v\in H$   from $x$  to the  Poincar\'e circle  with radius $R$  and  center $x$ spanned  on $L_x.$ 

We say  that a sequence of  intervals $[a(R),b(R)]\subset \R$ indexed by $R\in\R^+$ converges to  a number $\chi\in \R$ and write $\lim_{R\to\infty} [a(R),b(R)]=\chi,$ if   
  $\lim_{R\to\infty} a(R)= \lim_{R\to\infty} b(R)=\chi.$
  
  Now  we are able  to state the main result.
  
  \begin{theorem}  \label{T:main} {\rm (Main Theorem).}
   Let $(X,\Lc)$ be  a  $\Cc^2$-smooth hyperbolic Riemann surface lamination and $T$  a harmonic current. 
 Let 
  $\mu$ be the measure associated to $T$ by (\ref{E:harmonic_measure}).
Assume, moreover, that  $T$ is  extremal   and that $\mu$  is a  (finite) harmonic  measure.
Consider  a   moderate cocycle
$\mathcal{A}:\ \Omega\times \R^+ \to  \GL(d,\K)  .    $ 
Then there is a leafwise saturated Borel set $Y$  of full $T$-measure 
which satisfies the conclusion of  Theorem \ref{T:VA}
and  the following additional  properties: 
 \begin{itemize}
\item[(i)] Assume that  $\mathcal A$ is  H\"older of order $\alpha<2.$ Then,   for each $1\leq i\leq m$ and  for each $x\in Y,$ there is a  set $G_x\subset [0,1)$ of full Lebesgue measure 
 such that     for each  $v\in V_i(x)\setminus V_{i+1}(x),$ 
\begin{equation}\label{e:Lyapunov_geometric}
 \lim_{R\to\infty}\Ec(x,\theta,v,R)= \chi_i,\qquad \theta\in G_x.
\end{equation}
Moreover, the maximal Lyapunov exponent $\chi_1$ satisfies
 \begin{equation}\label{e:Lyapunov_geometric_max}
 \lim_{R\to\infty}\Ec(x,\theta,R)= \chi_1,\qquad \theta\in G_x.
\end{equation}  
\item[(ii)]  Assume that  $\mathcal{A}$ is  strongly  moderate. Then,   for  each  $1\leq i\leq m$ and each $x\in Y,$ 
\begin{equation}\label{e:Lyapunov_geometric_hard}\lim_{R\to\infty}\Ec(x,H_i(x),R)=\chi_i.
\end{equation} 
  \end{itemize}
Here
$\K^d=\oplus_{i=1}^m H_i(x),$ $x\in Y,$ is  the Oseledec  decomposition given by Theorem \ref{T:VA}   and      $\chi_m<\chi_{m-1}<\cdots
<\chi_2<\chi_1$ are the  corresponding     Lyapunov exponents.
\end{theorem}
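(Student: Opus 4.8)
The plan is to deduce the geometric statements from the stochastic formulas \eqref{e:Lyapunov}--\eqref{e:Lyapunov_max} of Theorem \ref{T:VA} by passing from Brownian trajectories to geodesic rays. The key mechanism is the comparison between the Wiener measure $W_x$ on $\Omega_x$ and the Lebesgue measure on the space of leaf-directions $[0,1)$, exploiting the well-known fact that almost every leafwise Brownian path converges to a point of $\partial\D$ (equivalently, has a well-defined asymptotic leaf-direction) and that the exit-point map pushes $W_x$ forward to the harmonic measure on $\partial\D$, which is comparable to Lebesgue measure. This will allow us to transfer a full-$W_x$-measure conclusion into a full-Lebesgue-measure conclusion on directions, \emph{provided} we can show that the exponential growth rate of $\|\Ac(\omega,t)v\|$ along a Brownian path $\omega$ asymptotic to direction $\theta$ agrees with the growth rate of $\|\Ac(\gamma_{x,\theta},R)v\|$ along the geodesic ray in the same direction. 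This last comparison is where the H\"older hypothesis of order $\alpha<2$ enters: one controls the discrepancy $\bigl\|\Ac(\omega,t)\Ac(\gamma_{x,\theta},R)^{-1}\bigr\|$ (with $R$ the Poincar\'e distance from $x$ to $\omega(t)$) by the H\"older norm of $\Ac$ times a power of the Poincar\'e distance between $\omega(t)$ and the geodesic ray, and one shows this distance grows sublinearly (in fact is bounded, by the sublinear tracking / shadowing property of Brownian paths in negative curvature) so that after dividing by $R$ and letting $R\to\infty$ the correction vanishes. The threshold $\alpha<2$ reflects the quadratic blow-up of the Poincar\'e metric near $\partial\D$.

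More concretely, I would proceed in the following steps. \emph{Step 1.} Set up the reparametrization: for a Brownian path $\omega\in\Omega_x$ lift it to $\widetilde\omega$ in $\D$ via $\phi_x$, record its exit point $e^{2\pi i\theta(\omega)}\in\partial\D$ (defined $W_x$-a.e.), and let $R(t):=\dist_P(0,\widetilde\omega(t))$ be the Poincar\'e distance travelled; note $R(t)/t\to 1$ as $t\to\infty$ for $W_x$-a.e.\ $\omega$ (linear drift of hyperbolic Brownian motion), so $\frac1t\log(\cdots)$ and $\frac1{R}\log(\cdots)$ have the same limits. \emph{Step 2.} Prove the shadowing estimate: $W_x$-a.e.\ path $\omega$ stays within bounded Poincar\'e distance of the geodesic ray $\gamma_{x,\theta(\omega)}$, and more precisely $\dist_P\bigl(\widetilde\omega(t),[0,e^{2\pi i\theta(\omega)})\bigr)=o(R(t))$; combine with the homotopy law (2) to write $\Ac(\omega,t)=\Ac(\text{short correction})\cdot\Ac(\gamma_{x,\theta(\omega)},R(t))$ and bound the correction factor via the H\"older/moderate estimate. \emph{Step 3.} Conclude \eqref{e:Lyapunov_geometric} and \eqref{e:Lyapunov_geometric_max}: from Theorem \ref{T:VA}(ii) the set $F_x$ has full $W_x$-measure; its image under $\omega\mapsto\theta(\omega)$ has full harmonic measure, hence full Lebesgue measure, on $[0,1)$; call this $G_x$; Steps 1--2 then give, for $v\in V_i(x)\setminus V_{i+1}(x)$ and $\theta\in G_x$, that $\Ec(x,\theta,R,v)\to\chi_i$, and similarly for the operator norm using \eqref{e:Lyapunov_max}. \emph{Step 4.} For part (ii), upgrade from a full-measure set of directions to \emph{all} directions in the averaged sense: the integral $\int_0^1\Ec(x,\theta,R,v)\,d\theta$ only requires an $L^1$ (dominated-convergence) control of $\theta\mapsto\Ec(x,\theta,R,v)$ uniformly in $R$, and the strong moderateness of $\Ac$ is precisely the hypothesis giving a uniform integrable majorant (in $R$, near the bad directions $\theta$ where the geodesic approaches $\partial\D$ slowly and the H\"older correction could a priori be large); then by Step 3 the integrand converges a.e.\ to $\chi_i$, so the integral converges to $\chi_i$, uniformly over $v\in H_i(x)\setminus\{0\}$ because $H_i(x)$ is a genuine Oseledec subspace (no logarithmic loss, the convergence in \eqref{e:Lyapunov} is uniform on the unit sphere of $H_i(x)$ off a null set by standard Oseledec arguments), whence $\Ec(x,R,H_i(x))=[a(R),b(R)]\to\chi_i$.

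I expect the main obstacle to be \emph{Step 2}, the quantitative shadowing/tracking estimate that makes the geodesic ray a good enough proxy for the Brownian path uniformly along the whole trajectory, together with the matching bound on the cocycle-correction factor: one needs not merely that the two paths have the same endpoint at infinity but a \emph{uniform} control of their Poincar\'e separation at time $t$ that is $o(R(t))$ on a full-measure set, and then one must feed this into the definition of H\"older (resp.\ strongly moderate) cocycle from Definition \ref{D:moderate_cocycle} to see that the accumulated distortion of $\Ac$ along the correction path is sub-exponential in $R$. A secondary subtlety, relevant to part (ii), is interchanging $\lim_{R\to\infty}$ with $\int_0^1(\cdot)\,d\theta$ and with $\sup/\inf$ over $v\in H_i(x)$: the strong moderateness hypothesis should furnish the domination needed for the integral, while the Oseledec structure of $H_i(x)$ (as opposed to an arbitrary subspace of $V_i(x)$, for which only a one-sided filtration bound holds) is what rules out an extra logarithmic term and yields the honest two-sided convergence to $\chi_i$.
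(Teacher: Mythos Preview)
Your outline for Part (i) is essentially the paper's argument: lift to $\D$, use the exit map $\omega\mapsto\theta(\omega)$ to turn full $W_x$-measure into full Lebesgue measure on directions, and control the discrepancy between Brownian path and geodesic via the H\"older bound on the specialization $f_{x,[v]}$. Two points need correcting. First, the Brownian path does \emph{not} stay within bounded Poincar\'e distance of the ray; Ancona gives only $\dist_P(\omega(t),\gamma_\omega(\R^+))=O(\log t)$. Second, your explanation of the threshold $\alpha<2$ is wrong. The paper compares $\omega(R)$ with $\gamma_\omega(R)$ at the \emph{same time} $R$, and the relevant estimate (combining Ancona with a result of Lyons--Cranston on the radial drift, $|\dist_P(\omega(t),0)-t|\le c_\omega t^{1/2}(\log t)^\rho$) is
\[
\dist_P\bigl(\omega(R),\gamma_\omega(R)\bigr)\le c_\omega R^{1/2}(\log R)^\rho.
\]
Feeding this into the H\"older bound gives a correction of order $R^{-1}\bigl(R^{1/2}(\log R)^\rho\bigr)^\alpha$, which tends to $0$ exactly when $\alpha<2$. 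Your ``quadratic blow-up of $g_P$'' is not the mechanism.

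For Part (ii) your proposed route is \emph{genuinely different} from the paper's, and the gap you yourself flag (interchanging $\lim_R$ with $\sup/\inf$ over $v$) is real; dominated convergence plus pointwise-in-$v$ convergence of $\int_0^1\Ec(x,\theta,R,v)\,d\theta$ does not by itself give uniformity in $v$. One can rescue your scheme by first upgrading Part (i) to $\sup_{v\in H_i(x),\|v\|=1}|\Ec(x,\theta,R,v)-\chi_i|\to0$ for $\theta\in G_x$ (this needs the co-norm Oseledec statement $\tfrac1t\log m(\mathcal A(\omega,t)|_{H_i(x)})\to\chi_i$, which is not in Theorem~\ref{T:VA} as stated, plus uniformity of the H\"older constant in $v$, which does follow from strong moderateness via the implication strongly moderate $\Rightarrow$ uniformly Lipschitz), and then applying DCT to $\theta\mapsto\sup_v|\Ec-\chi_i|$. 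The paper avoids all of this and does \emph{not} pass through Part (i) at all. Instead it proves an ``expectation convergence'': setting $\Mb_{i,n}(x)=\sup_v\tfrac1n\E_x[\log\|\mathcal A(\bullet,n)v\|/\|v\|]$ and $\mb_{i,n}$ similarly with $\inf$, it shows these sequences are sub/super-additive under $D_n$, applies Akcoglu--Sucheston's ratio ergodic theorem together with a Ledrappier-type identification of $\chi_i$ as $\int\varphi\,d\nu$ over weakly harmonic measures on the cylinder lamination, and obtains $\tfrac1n\Mb_{i,n}(x),\tfrac1n\mb_{i,n}(x)\to\chi_i$ for $\mu$-a.e.\ $x$. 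The passage from these heat-diffusion quantities to the geodesic averages $\int_0^1\Ec(x,\theta,R,v)\,d\theta$ is done by a separate analytic lemma: for $f$ with $|\Delta f|$ bounded (this is where strong moderateness enters), the Riesz representation and a heat-kernel estimate from \cite{DinhNguyenSibony1} give $\int_0^1 f(r_Re^{2\pi i\theta})\,d\theta=(D_{[R]}f)(0)+O(R^{1/2}\sqrt{\log R})$. The paper's route thus handles the uniformity in $v$ structurally (sub-additivity of the sup) rather than by a compactness/DCT argument, and uses the strong moderateness hypothesis in a completely different place (bounding $\Delta f_{x,u}$ rather than providing an integrable majorant).
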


 
 Theorem  \ref{T:main} gives a geometric meaning to  
 the stochastic formulas (\ref{e:Lyapunov})--(\ref{e:Lyapunov_max}).
 
Applying Theorem  \ref{T:main}    to the case where $X$ is compact and the cocycle $\mathcal A$ is  $\Cc^2$-differentiable (see Proposition
\ref{P:moderate_criterion} below),  we obtain the following 
 \begin{corollary}\label{C:main}
  Let $(X,\Lc)$ be  a  $\Cc^2$-smooth hyperbolic Riemann surface lamination and $T$  a harmonic current. 
Assume, moreover, that $X$ is  compact and  $T$ is  extremal. 
Consider  a $\Cc^2$-differentiable  cocycle
$\mathcal{A}:\ \Omega\times \R^+ \to  \GL(d,\K)  .    $ 
Then there is a leafwise saturated Borel set $Y$  of full $T$-measure 
which satisfies the conclusions of  Theorem \ref{T:VA}
 as well as those  of Theorem  \ref{T:main}.
 \end{corollary}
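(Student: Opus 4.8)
The plan is to deduce the corollary directly from the Main Theorem by checking that all of its hypotheses are satisfied in the present compact, $\Cc^2$-differentiable setting; the sole non-trivial input is Proposition \ref{P:moderate_criterion}, which (as announced in the statement) supplies the required regularity and harmonicity facts when $X$ is compact.

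First I would dispose of the standing hypotheses of Theorem \ref{T:VA} and Theorem \ref{T:main}. Extremality of $T$ is assumed in the corollary. The other standing assumption, that $\mu = T\wedge g_P$ be a harmonic measure, is automatic here: since $X$ is compact, Proposition \ref{P:moderate_criterion} guarantees that the measure associated to any harmonic current is a harmonic measure. Thus Theorem \ref{T:VA} applies and produces a leafwise saturated Borel set $Y$ of full $\mu$-measure (equivalently, of full $T$-measure), the holonomy-invariant Oseledec decomposition $\K^d = \oplus_{i=1}^m H_i(x)$ for $x \in Y$, the associated Lyapunov filtration, and the Lyapunov exponents $\chi_m < \cdots < \chi_1$.

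Next I would verify the regularity conditions in the two parts of Theorem \ref{T:main}. By Proposition \ref{P:moderate_criterion}, a $\Cc^2$-differentiable cocycle on a compact lamination is strongly moderate and uniformly Lipschitz. Being strongly moderate it is in particular moderate, so Theorem \ref{T:main} is applicable and furnishes a leafwise saturated Borel set $Y$ of full $T$-measure, a suitable subset of the one given by Theorem \ref{T:VA}, for which both conclusions of that theorem hold. Being uniformly Lipschitz, the cocycle is Hölder of order $\alpha = 1$, and $1 < 2$, so the hypothesis of Theorem \ref{T:main}(i) holds; hence for each $1 \le i \le m$ and each $x \in Y$ there is a set $G_x \subset [0,1)$ of full Lebesgue measure on which the equalities (\ref{e:Lyapunov_geometric}) and (\ref{e:Lyapunov_geometric_max}) are valid. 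Since the cocycle is also strongly moderate, Theorem \ref{T:main}(ii) gives (\ref{e:Lyapunov_geometric_hard}) for each $1 \le i \le m$ and each $x \in Y$. Because Theorem \ref{T:main} delivers a single set $Y$ serving all three displayed identities simultaneously, the corollary follows.

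There is no genuine obstacle in this argument: it is a straightforward specialization of Theorem \ref{T:main}. The only points that require care are bookkeeping ones, namely confirming through Proposition \ref{P:moderate_criterion} that compactness upgrades $\Cc^2$-differentiability to both strong moderateness and uniform Lipschitz continuity, and noting that the Lipschitz exponent $\alpha = 1$ indeed meets the constraint $\alpha < 2$ imposed in part (i), so that the same set $Y$ witnesses all of (\ref{e:Lyapunov_geometric}), (\ref{e:Lyapunov_geometric_max}), and (\ref{e:Lyapunov_geometric_hard}).
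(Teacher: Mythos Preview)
Your approach is correct and matches the paper's: both derive the corollary directly from Theorem \ref{T:main} by invoking Proposition \ref{P:moderate_criterion} to upgrade the $\Cc^2$-differentiable cocycle on a compact lamination to a strongly moderate, uniformly Lipschitz (hence H\"older of order $1<2$) cocycle. One small correction: the fact that $\mu = T\wedge g_P$ is a harmonic measure when $X$ is compact is the content of Theorem \ref{thm_harmonic_currents_vs_measures}~(i)--(ii), not of Proposition \ref{P:moderate_criterion}, which concerns only the regularity of the cocycle.
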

 
 Let  $(M,\Lc,E)$ be  a transversally $\Cc^2$-smooth (resp. transversally  holomorphic)    singular foliation  by  Riemann surfaces with the set of singularities $E$
in a Riemannian manifold (resp. Hermitian complex  manifold) $M.$
Consider  a leafwise saturated, compact   set $X\subset M\setminus E$  whose  leaves are all hyperbolic.  
So the  restriction of the foliation  $(M\setminus E,\Lc)$ to $X$ gives an inherited compact
  $\Cc^2$-smooth hyperbolic Riemann lamination $(X,\Lc).$ Moreover,
the   holonomy  cocycle of $(M\setminus E,\Lc)$ induces, by restriction, an inherited  $\Cc^2$-differentiable cocycle on $(X,\Lc)$
(see Example \ref{ex:2} below).
Hence, Corollary \ref{C:main} applies  to  the latter cocycle.   
 In particular, when $(M,\Lc,E)$ is a transversally  holomorphic    singular foliation  on a compact Hermitian complex  manifold $M$ of dimension $n,$   
 the corollary 
applies to the induced  holonomy cocycle of rank $n-1$ associated with every  minimal set  $X$ whose leaves are all  hyperbolic. Here 
    a {\it minimal set} is a  leafwise saturated closed  subset
of $M$   which  contains  no proper subset with this property.  

  We outline  the  strategy of the  proofs of Theorem \ref{T:VA}  and  Theorem \ref{T:main}. 
  In  the previous  work \cite[Theorem 3.7 and  3.11]{NguyenVietAnh1},
we have developed a  general  approach to obtain Oseledec multiplicative  ergodic theorems 
for  general laminations.
  For the proof of  Theorem \ref{T:VA}  we  adapt  this approach  
to the present context of hyperbolic Riemann surface laminations.


 The proof of Theorem \ref{T:main} constitutes the core of this  article.
The proof of its  first part  relies on the theory of  Brownian trajectories  on hyperbolic spaces.
More  concretely, 
some quantitative results on  the boundary behavior of   Brownian trajectories  
by  Lyons \cite{Lyons} and Cranston \cite{Cranston} and on  
the shadow of   Brownian trajectories   by geodesic rays (see, for example, Ancona \cite{Ancona})
are  our main  ingredients.
This,  combined with the H\"older  regularity of the  cocycle,
  allows us to replace a  Brownian  trajectory  by a unit-speed geodesic ray with uniformly  distributed leaf-direction.
Hence,   Part (i) of Theorem \ref{T:main} will follow from  Theorem \ref{T:VA}. 

To establish Part (ii) of Theorem \ref{T:main} we  need  two steps.  
In the  first step we  adapt   to our context 
the  so-called {\it Ledrappier  type  characterization  of Lyapunov  spectrum} which  was introduced in the previous  work \cite{NguyenVietAnh1}.  
 This, combined with the ratio ergodic theorem due to Akcoglu-Sucheston \cite{AkcogluSucheston}, allows us to show that
 a  similar version of  formula 
(\ref{e:Lyapunov_geometric_hard}) holds  when  the expansion  rates in terms of  geodesic rays
are replaced by some  heat diffusions associated  with the  cocycle.

The  second step shows that  the above  heat diffusions can be  approximated by  the expansion  rates. 
To do this we    establish a new  geometric estimate on the heat diffusions (see Lemma  \ref{L:key} below).
In fact, this  delicate  estimate  relies on  the  proof of the geometric Birkhoff ergodic theorem  developed in a previous  joint-work with  Dinh and Sibony \cite{DinhNguyenSibony1}. 
Combining the  two steps, Part (ii) of Theorem \ref{T:main}  follows.

 The  article  is  organized  as  follows.  Section  \ref{S:preparation} sets up the background  of this  work.
 Section  \ref{S:leaf-directions} is devoted to  the proof of Theorem \ref{T:VA}  and  Part (i)  of  Theorem  \ref{T:main}. 
 The first  step in the proof of Part (ii) of     Theorem  \ref{T:main} is  developed in Section \ref{S:expectation}.
 The second  step is given in Section   \ref{S:Proofs}.
  When the  lamination $(X,\Lc)$  and the cocycle $\mathcal A$ arise  from some particular  suspensions, in parallel  to  our  approach there is  another  classical way    to define  Lyapunov  exponents   
 using the geodesic flows (see, for example, \cite{BGM}).
The  last  section is devoted to the proof   
   that,  in this  context, both approaches   give  the same  Oseledec decomposition and  the same Lyapunov  spectrum. Nevertheless, our  method gives more  geometric  properties   than the  other  one.  
Moreover, this  context  corresponds to  a  very special case  of our result when the lamination consists only of a single leaf. 
    The section  is concluded  with some open questions and  remarks.

\smallskip

\noindent{\bf Acknowledgement.}
The author would like to thank Alano Ancona for  interesting  discussions.
He also thanks the referee for carefully reading the paper and for suggestions leading to the improvement of the exposition.
This  work was  partially prepared during the author's visit at the Max-Planck Institute for Mathematics in Bonn.
He would like  to express his  gratitude to this  organization for hospitality and for financial support.
 
 \section{Preparatory results}
 \label{S:preparation}

\subsection{Lamination, hyperbolicity and    cocycle} \label{ss:background} 

Let $X$ be a locally compact space.  A {\it   Riemann surface lamination}     $(X,\Lc)$   is  the  data of  a {\it (lamination)  atlas} $\Lc$ 
of $X$ with (laminated) charts 
$$\Phi_p:\U_p\rightarrow \B_p\times \T_p.$$
Here, $\T_p$ is a locally compact  metric space, $\B_p$ is a domain in $\C$,  $\U_p$ is  an open set in 
$X,$ and  
$\Phi_p$ is  a homeomorphism,  and  all the changes of coordinates $\Phi_p\circ\Phi_q^{-1}$ are of the form
$$x=(y,t)\mapsto x'=(y',t'), \quad y'=\Psi(y,t),\quad t'=\Lambda(t),$$
 where $\Psi,$ $\Lambda$ are continuous  functions and $\Psi$ is  holomorphic in  $y.$
 Moreover, we say  that  $(X,\Lc)$ is  {\it  $\Cc^k$-smooth} for some $k\in\N\cup\{\infty\}$ if
   $\Psi$ is $\Cc^k$-smooth   with
respect to $y,$ and its partial derivatives of any total order $\leq k$ with respect to $y$ and $\bar y$ are jointly continuous
with respect  to $(y,t).$

The open set $\U_p$ is called a {\it flow
  box} and the Riemann surface $\Phi_p^{-1}\{t=c\}$ in $\U_p$ with $c\in\T_p$ is a {\it
  plaque}. The property of the above coordinate changes insures that
the plaques in different flow boxes are compatible in the intersection of
the boxes. Two plaques are {\it adjacent} if they have non-empty intersection.
 
A {\it leaf} $L$ is a minimal connected subset of $X$ such
that if $L$ intersects a plaque, it contains that plaque. So a leaf $L$
is a  Riemann surface  immersed in $X$ which is a
union of plaques. 
 
 \begin{definition}\label{D:hyperbolic}
A leaf $L$  of a  lamination $(X,\Lc)$ is  said to be  {\it hyperbolic} if
it  is a   hyperbolic  Riemann  surface, i.e., it is  uniformized   by 
$\D.$   
The   lamination   is  said to be {\it hyperbolic} if  
  its leaves   are all  hyperbolic. 
\end{definition}
 \smallskip
 
\noindent {\bf Standing Hypothesis.} From now on, we always assume that $(X,\Lc)$ is 
a  $\Cc^2$-smooth  Riemann surface lamination.
 
 \smallskip
 
We denote by  $\Cc(X,\Lc)$ the  space of all functions  $f$ defined and  compactly supported  on $ X$ which are  leafwise  $\Cc^2$-smooth
 and transversally continuous, that is,  
 for each  laminated   chart
$\Phi_p:\U_p\rightarrow \B_p\times \T_p$ and all $m,n\in\N$ with $m+n\leq 2,$
 the derivatives  ${\partial^{m+n}(f\circ\Phi_p^{-1})\over  \partial y^m\partial\bar{y}^n}$ exist and are jointly continuous in $(y,t).$

  When a lamination $(X,\Lc)$ satisfies that $X$ is a   manifold and that the leaves of $\Lc$ are Riemann surfaces immersed in $X$, we say that $(X,\Lc)$ is a {\it foliation}.  Moreover,  $(X,\Lc)$ is called a  {\it transversally  $\Cc^k$-smooth foliation} 
(resp.    {\it transversally  holomorphic foliation}  when $X$ is a complex manifold)  if
there is an atlas $\Lc$ of $X$ with charts 
$$\Phi_i:\U_i\rightarrow \B_i\times \T_i,$$
with $\T_i$  an  open set of some $\R^d$ (resp. an open set of some $\C^d$) such that 
 each above map  $\Psi$ is a diffeomorphism  of class $\Cc^k$ (resp.  a biholomorphic map). 
 
 We say that $(M,\Lc,E)$ is  a {\it singular  foliation} if $M$ is  a  manifold and $E\subset M$ is  a closed subset such that
 $\overline{M\setminus E}=M$ and  $(M\setminus E,\Lc)$ is a  foliation. $E$ is  said to be the  {\it set of singularities}.

 Let $\mathcal A:\ \Omega(X,\Lc)\times \R^+\to \GL(d,\K)$   be  a   map that  satisfies the identity, homotopy and
multiplicative laws in Definition  \ref{D:cocycle}. 
In any chart
$\Phi_p:  \U_p\to \B_p\times \T_p$ with  $\B_p$ simply connected,  consider the map
$A_p:\  \B_p\times \B_p\times \T_p\to\GL(d,\K)$  defined  by
$$
A_p(y,z,t):=\mathcal A(\omega,1),
$$
where  $\omega$ is  any leafwise path  such that $\omega(0)=\Phi_p^{-1}(y,t),$ $\omega(1)=\Phi_p^{-1}(z,t)$ 
and  $\omega[0,1]$ is  contained in the simply connected  plaque   $\Phi_p^{-1}(\cdot,t).$
Now   we are able to   explain  the  last law  in  Definition   \ref{D:cocycle} and single out some new classes
of cocycles.  
\begin{definition}\label{D:local_exp}\rm 
 $A_p$  is  called  the {\it local expression} of  $\mathcal A$ on the  chart  $\Phi_p.$
 
  $\mathcal A$ is called   a {\it cocycle} if its local  expression on each chart is Borel measurable.
 
 Now let $(X,\Lc)$ be  a $\Cc^k$-smooth  hyperbolic Riemann surface lamination for some $k\in\N\cup\{\infty\}$.
 \begin{itemize}
 \item [$\bullet$] $\mathcal A$ is called   a {\it leafwise $\Cc^k$-differentiable cocycle}   if,
for each chart  $\Phi_p,$ the local expression   $A_p$  is   $\Cc^k$-differentiable with respect to  $(y,z).$

 \item [$\bullet$]  $\mathcal A$ is called   a {\it   $\Cc^k$-differentiable cocycle}  if,
for each chart $\Phi_p,$ the local expression   $A_p$  is   $\Cc^k$-differentiable with respect to  $(y,z)$
 and its  partial  derivatives of any  total order $\leq k$
 with respect to $(y,z)$ are jointly continuous   in $(y,z,t).$
 \end{itemize}
  \end{definition}
 
\begin{example}\label{ex:1} \rm A fundamental  example of $\Cc^k$-differentiable  $\R$-valued (resp. $\C$-valued) cocycles is  the holonomy  cocycle
 of a transversally $\Cc^k$-smooth (resp. transversally  holomorphic)  foliation $(X,\Lc)$ by hyperbolic Riemann surfaces 
in a Riemannian  manifold (resp. Hermitian complex  manifold) $X.$ See \cite[Proposition 3.3]{NguyenVietAnh1} for more details.  

A more sophisticated situation   will be  discussed in Example \ref{ex:2} below.
\end{example}
 \subsection{Heat diffusions and harmonic currents versus harmonic  measures}
 \label{ss:heat_diffusions}
     
Let  $(X,\Lc)$ be  a hyperbolic  Riemann surface  lamination.  
     The leafwise Poincar\'e metric
$g_P$  induces  the corresponding 
Laplacian $\Delta$  on leaves   (see \cite{DinhNguyenSibony1}).  
 For  every point  $x\in X$
 consider  the   {\it heat  equation} on $L_x$
 $$
 {\partial p(x,y,t)\over \partial t}=\Delta_y p(x,y,t),\qquad  \lim_{t\to 0+} p(x,y,t)=\delta_x(y),\qquad   y\in L_x,\ t\in \R_+.
 $$
Here   $\delta_x$  denotes  the  Dirac mass at $x,$ $\Delta_y$ denotes the  Laplacian  $\Delta$ with respect to the  variable $y,$
 and  the  limit  is  taken  in the  sense of distribution, that is,
$$
 \lim_{t\to 0+
}\int_{L_x} p(x,y,t) f(y) g_P( y)=f(x)
$$
for  every  smooth function  $f$   compactly supported in $L_x.$   

The smallest positive solution of the  above  equation, denoted  by $p(x,y,t),$ is  called  {\it the heat kernel}. Such    a  solution   exists   because  $(L_x,g_P)$ is
complete and   of bounded  geometry  (see, for example,  \cite{CandelConlon2,Chavel}).  
 The  heat kernel  $p(x,y,t)$  gives  rise to   a one  parameter  family $\{D_t:\ t\geq 0\}$ of  diffusion  operators    defined on bounded measurable functions  on $X$ by
 \begin{equation}\label{e:diffusions}
 D_tf(x):=\int_{L_x} p(x,y,t) f(y) g_P (y),\qquad x\in X.
 \end{equation}
 We record here  the  semi-group property  of this  family: 
\begin{equation}\label{e:semi_group}
D_0=\id\quad\text{and}\quad   D_t \mathbf{1} =\mathbf{1}\quad\text{and}\quad D_{t+s}=D_t\circ D_s \quad\text{for}\ t,s\geq 0,
\end{equation}
where $\mathbf{1}$ denotes the function which is identically equal to $1.$

Using   the map $\phi_x:\  \D\to L_x$ given in  (\ref{e:covering_map}), the following identity relates the diffusion operators in  $L_x$ and those  in   the  Poincar\'e disc $(\D,g_P):$ For   $x\in X$ and for every  bounded measurable  function $f$ defined on $L_x,$  
\begin{equation}\label{e:commutation}
 D_t(f\circ \phi_x)=(D_tf)\circ\phi_x, \qquad \textrm{on $L_x$  for all $t\in\R^+.$} 
\end{equation}
 See \cite[Proposition 2.7]{NguyenVietAnh1} for a proof.

 Now  we arrive  at  two notions of harmonic measures.
 \begin{definition}\label{D:harmonic_measure}\rm 
A positive locally finite  Borel measure  $\mu$ on $X$ is said  to be
{\it quasi-harmonic}  if
$$
\int_X  \Delta u \,d\mu=0
$$ 
 for all  functions  $u\in \Cc(X,\Lc).$
 
 A  quasi-harmonic measure $\mu$ is  said to be {\it harmonic }  if  $\mu$ is finite and 
 $\mu$  is {\it $D_t$-invariant} for all $t\in\R^+,$ i.e, 
$$ \int_X  D_tf d\mu=\int_X fd\mu,  \qquad  f\in \Cc(X,\Lc),\ t\in\R^+.   $$
\end{definition}
 
  Let $\Cc^1(X,\Lc)$ denote the  space   of all forms $h$ of bidegree $(1,1)$   defined  on
leaves  of the lamination and  compactly supported  on $X$  such that $h$ is   leafwise  continuous and transversally continuous, that is,  
 for each  laminated   chart
$\Phi_p:\U_p\rightarrow \B_p\times \T_p,$ the form
   $h\circ\Phi_p^{-1}$ is jointly continuous in $(y,t).$ 
  For each chart $\Phi_p:\U_p\rightarrow \B_p\times \T_p,$ the  complex   structure on $\B_p$ induces  a complex  structure on the leaves of $X.$
Therefore,  the operator $\partial$  and $\bar\partial$  can be  defined  so that they act leafwise on  forms
as in the case of manifolds. 
 So we get easily   that
$\ddbar:\  \Cc(X,\Lc)\to\Cc^1(X,\Lc)$. 
   A form $h\in \Cc^1(X,\Lc)$ is  said to be {\it positive} if its restriction to every plaque
 is  a  positive $(1,1)$-form in the  usual  sense of Lelong.
 \begin{definition}\label{D:harmonic_current}
\rm A {\it   harmonic current} $T$  on the lamination is  a  linear continuous  form
    on $\Cc^1(X,\Lc)$ which  verifies $\ddbar T=0$ in the  weak sense (namely  $T(\ddbar f)=0$ for all $f\in  \Cc(X,\Lc)$), and
which is  positive (namely, $T(h)\geq 0$ for all positive forms $h\in \Cc^1(X,\Lc)$). 
\end{definition}
  
   For 
   the  existence  of    nonzero    harmonic currents,  see  the discussion preceding   Theorem  \ref{T:VA}. 

Recall that a positive finite  measure  $\mu$ on the $\sigma$-algebra of Borel sets in $X$ is  said  to be  {\it ergodic} if for every  leafwise  saturated  measurable set $Z\subset X,$
   $\mu(Z)$ is  equal to either $\mu(X)$ or $0.$   A   harmonic  current $T$ is said to be {\it extremal}
   if  it  is an extremal point  in  the  convex  cone of  all  harmonic  currents. 
The  following result relates the notions of harmonic  measures and  harmonic currents. 
\begin{theorem}\label{thm_harmonic_currents_vs_measures}
Let $(X,\Lc)$ be  a hyperbolic Riemann surface  lamination. \\
(i) If $X$ is  compact, then  each quasi-harmonic    measure is  harmonic.
\\ 
(ii)   The map $T\mapsto  \mu=T\wedge g_P$ which is defined on the  convex  cone of all  harmonic  currents is one-to-one 
and  its image is contained in  the convex  cone of  all quasi-harmonic  measures $\mu$.
If, moreover, $X$ is  compact, then
 this map  is  an one-to-one  correspondence between the  convex  cone of  all harmonic  currents $T$ and  
the convex  cone of all harmonic  measures $\mu$. 
\\
(iii) If $T$ is an  extremal harmonic current and $\mu:=T\wedge g_P$ is  finite, then    $\mu$ is  ergodic.
\end{theorem}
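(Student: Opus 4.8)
The plan is to treat the three parts in turn, using part (i) to settle the compact case of (ii) and Garnett's local structure theory of harmonic currents for (iii). Starting with \emph{(i)}: fix a quasi-harmonic measure $\mu$ and a function $f\in\Cc(X,\Lc)$. Since $X$ is compact, $f$ and every diffusion $D_tf$ are automatically bounded and compactly supported, with $\|D_tf\|_\infty\le\|f\|_\infty$ by (\ref{e:semi_group}). The one input that is not purely formal is the regularity statement that $D_tf\in\Cc(X,\Lc)$ for every $t>0$: leafwise $D_tf$ is even $\Cc^\infty$ by parabolic regularity, and its transversal continuity follows from the joint continuity of the leafwise heat kernel $p(x,y,t)$ in all of its variables, which holds because the leaves are complete of bounded geometry and the lamination is $\Cc^2$-smooth (see \cite{CandelConlon2,NguyenVietAnh1}). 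Granting this, one differentiates under the integral sign --- justified by the uniform bounds on $\partial_tD_tf=\Delta(D_tf)$ for $t$ in compact subintervals of $(0,\infty)$, again a consequence of bounded geometry --- to obtain
\[
\frac{d}{dt}\int_X D_tf\,d\mu=\int_X\Delta(D_tf)\,d\mu=0\qquad(t>0),
\]
the vanishing being quasi-harmonicity of $\mu$ applied to $D_tf\in\Cc(X,\Lc)$. Hence $t\mapsto\int_X D_tf\,d\mu$ is constant on $(0,\infty)$, and letting $t\to0^+$ (so that $D_tf\to f$ pointwise) and using dominated convergence, that constant equals $\int_X f\,d\mu$. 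Thus $\mu$ is $D_t$-invariant, i.e.\ harmonic.

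For \emph{(ii)}, injectivity is seen by writing an arbitrary $h\in\Cc^1(X,\Lc)$ as $h=\varphi\,g_P$ with $\varphi:=h/g_P$ a continuous compactly supported function, $\varphi$ being continuous because $g_P$ is a leafwise-smooth, nowhere-vanishing, transversally continuous $(1,1)$-form (continuity of the leafwise Poincar\'e metric); then $(T\wedge g_P)(\varphi)=T(\varphi g_P)=T(h)$ by the definition of $T\wedge g_P$ in (\ref{E:harmonic_measure}), so $T$ is determined by $T\wedge g_P$. For the image, given a quasi-harmonic measure $\mu$ put $T(h):=\mu(h/g_P)$ on $\Cc^1(X,\Lc)$; this is linear, positive (since $h\ge0$ forces $h/g_P\ge0$), and continuous (finiteness of $\mu$ together with dominated convergence on a common compact support), and by the pointwise leafwise identity $\ddbar f=c_0\,(\Delta f)\,g_P$ --- valid with a universal nonzero constant $c_0$ fixed by the normalizations of $\ddbar$ and $\Delta$ --- one gets $T(\ddbar f)=c_0\,\mu(\Delta f)=0$ for all $f\in\Cc(X,\Lc)$, so $T$ is a harmonic current with $T\wedge g_P=\mu$. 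Reading the same identity in reverse shows that for any harmonic current $T$ the measure $\mu=T\wedge g_P$ satisfies $\mu(\Delta u)=c_0^{-1}T(\ddbar u)=0$, i.e.\ is quasi-harmonic; hence the image is exactly the cone of quasi-harmonic measures. If moreover $X$ is compact, part (i) identifies this cone with the cone of harmonic measures, and together with injectivity this gives the asserted bijection.

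For \emph{(iii)}, let $Z\subset X$ be leafwise saturated and Borel. In a flow box $\Phi_p:\U_p\to\B_p\times\T_p$, a plaque meeting $Z$ lies in a leaf contained in $Z$ and hence is itself contained in $Z$; so $Z\cap\U_p$ is a union of plaques, namely $Z\cap\U_p=\pi_p^{-1}(B_p)$ for a Borel set $B_p\subset\T_p$, where $\pi_p$ is the projection onto the transversal. By the local structure theory of harmonic currents on Riemann surface laminations (Garnett \cite{Garnett}; see also \cite{FornaessSibony2}), $T$ is represented on $\U_p$ as an integral of positive leafwise-harmonic functions on the plaques against a positive transverse measure $\nu_p$ on $\T_p$; replacing $\nu_p$ by its restriction to $B_p$ represents $\mathbf{1}_ZT$ in the same form, so $\mathbf{1}_ZT$, and likewise $\mathbf{1}_{X\setminus Z}T$, is again a positive harmonic current. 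Thus $T=\mathbf{1}_ZT+\mathbf{1}_{X\setminus Z}T$ is a decomposition of $T$ inside the cone of harmonic currents, and extremality forces $\mathbf{1}_ZT=s\,T$ for some $s\in[0,1]$. Pairing with $g_P$, compatibly with the above local representation, yields $\mathbf{1}_Z\mu=s\,\mu$; evaluating both sides on $X\setminus Z$ gives $s\,\mu(X\setminus Z)=0$, so either $s=0$ and $\mu(Z)=0$, or $\mu(X\setminus Z)=0$ and (using finiteness of $\mu$) $\mu(Z)=\mu(X)$. Hence $\mu$ is ergodic.

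The \emph{main obstacle} is concentrated in two places: in (i), the regularity $D_tf\in\Cc(X,\Lc)$ and the differentiation-under-the-integral step, both resting on completeness and bounded geometry of the leaves and on the $\Cc^2$-smoothness of the lamination; and in (iii), the assertion that the restriction $\mathbf{1}_ZT$ of a harmonic current to a leafwise saturated Borel set is again a harmonic current, which is exactly what the local structure theory of harmonic currents supplies. Everything else is routine bookkeeping with the definitions.
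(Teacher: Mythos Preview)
Your proof tracks the paper's closely: the paper defers (i) entirely to references (and the argument you spell out is exactly what those references contain), reads the first half of (ii) off the definitions and the compact case off \cite{DinhNguyenSibony1}, and proves (iii) by the same local-structure mechanism, phrased as restricting $\mu$ to the saturated set and lifting back to currents via the surjectivity in (ii) rather than restricting $T$ directly as you do --- a cosmetic difference.

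One point needs correction. In the first half of (ii) you assert that $g_P$ is transversally continuous. Candel's theorem \cite{Candel} supplies this only when $X$ is compact --- the paper itself invokes it only under that hypothesis, in the proof of Proposition~\ref{P:moderate_criterion} --- and in the general setting of the theorem the leafwise Poincar\'e metric need not be continuous, nor bounded below on compact sets. Consequently $\varphi=h/g_P$ need not be continuous, and your definition $T(h):=\mu(h/g_P)$ need not yield a continuous linear functional on $\Cc^1(X,\Lc)$. The repair is to use the local description you already invoke in (iii): on each flow box a harmonic current (resp.\ a quasi-harmonic measure) disintegrates as a transverse positive measure against plaquewise positive harmonic densities, and since $g_P>0$ pointwise on each plaque, wedging or un-wedging with $g_P$ is a bijection at the level of these local data, with no transversal regularity of $g_P$ required. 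With that adjustment your argument goes through.
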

\begin{proof}    Assertion (i) follows from  the theory developed in \cite{Garnett} (see also \cite[Proposition 2.4.2]{CandelConlon2} and   \cite[Theorem  5.7]{DinhNguyenSibony1}
for  more explicit proofs).

The first part of  assertion  (ii)  follows from Definition  \ref{D:harmonic_measure}
and  \ref{D:harmonic_current}. When $X$ is  compact, we know,  
by   \cite[Proposition 5.1]{DinhNguyenSibony1}, that  the map $T\mapsto  \mu=T\wedge g_P$ is an one-to-one correspondence  between the  convex  cone of   harmonic  currents $T$
  and  
the convex  cone of  quasi-harmonic  measures $\mu$. 
This, combined with assertion (i), completes the second part of
 assertion (ii).

To prove  assertion (iii),
 suppose in order to get a contradiction that $\mu$ is  not ergodic. So there is a leafwise saturated Borel set $A\subset X$ such that 
$0<\mu(A)<\mu(X).$ Let $\mu_1:=2\mu|_A$ and  $\mu_2:=2\mu|_{X\setminus  A}.$ So  $\mu={\mu_1+\mu_2\over 2},$ and $\mu_1,$ $\mu_2$ are not  co-linear.  Using the local description of $T$ on  each flow box (see
 \cite[Proposition 2.3 ]{DinhNguyenSibony1}), we can show that both $\mu_1$ and $\mu_2$ are quasi-harmonic  measures.
By the first part of assertion (ii), let $T_1,T_2$  be   harmonic currents such that $\mu_1:= T_1\wedge g_P$ and $\mu_2:= T_2\wedge g_P.$  This, combined  with   
  $\mu= {\mu_1+\mu_2\over 2},$ implies that  $T={T_1+T_2\over 2}$ and $T_1,T_2$  are not co-linear. 
This contradicts the extremality of $T.$
\end{proof}

 \subsection{Wiener  measures}
 \label{ss:Wiener}
  
In this  subsection
 we follow the   expositions  given in Section 2.2, 2.4 and 2.5 in our previous  work \cite{NguyenVietAnh1}, which are,  in turn,  inspired by
 Garnett's theory   of leafwise  Brownian  motion in \cite{Garnett} (see also \cite{Candel2,CandelConlon2}).
 
 We first recall the construction of the Wiener measure $W_0$ on the  Poincar\'e  disc $(\D,g_P).$
 Let $\Omega_0$ be  the space consisting of  all continuous  paths  $\omega:\ [0,\infty)\to  \D$  with $\omega(0)=0.$
A {\it  cylinder  set (in $\Omega_0$)} is a 
 set of the form
$$
C=C(\{t_i,B_i\}:1\leq i\leq m):=\left\lbrace \omega \in \Omega_0:\ \omega(t_i)\in B_i, \qquad 1\leq i\leq m  \right\rbrace.
$$
where   $m$ is a positive integer  and the $B_i$'s are Borel subsets of $\D,$ 
and $0< t_1<t_2<\cdots<t_m$ is a  set of increasing times.
In other words, $C$ consists of all paths $\omega\in  \Omega_0$ which can be found within $B_i$ at time $t_i.$
Let $\Ac_0$ be the  $\sigma$-algebra on $\Omega_0$  generated  by all  cylinder sets.
 For each cylinder  set  $C:=C(\{t_i,B_i\}:1\leq i\leq m)$ as  above, define
\begin{equation}\label{eq_formula_W_x_without_holonomy}
W_x(C ) :=\Big (D_{t_1}(\chi_{B_1}D_{t_2-t_1}(\chi_{B_2}\cdots\chi_{B_{m-1}} D_{t_m-t_{m-1}}(\chi_{B_m})\cdots))\Big) (x),
\end{equation}
where,  $\chi_{B_i}$
is the characteristic function of $B_i$ and $D_t$ is the diffusion operator
given  by  (\ref{e:diffusions}) where  $p(x,y,t)$ therein is replaced by the heat kernel  $\bfp(\xi,\zeta,t)$  of the Poincar\'e disc. 
 It is  well-known that $W_0$ can be   extended   to a unique probability measure on $(\Omega_0,\Ac_0).$
This  is the {\it canonical  Wiener measure}  at $0$  on the Poincar\'e disc.

Let $(X,\Lc)$ be  a hyperbolic Riemann surface lamination endowed  with the leafwise Poincar\'e metric $g_P.$
Recall from  Introduction that   $\Omega:=\Omega(X,\Lc) $  is  the space consisting of  all continuous  paths  $\omega:\ [0,\infty)\to  X$ with image fully contained  in a  single   leaf. This  space  is  called {\it the sample-path space} associated to  $(X,\Lc).$
  Observe that
$\Omega$  can be  thought of  as the  set of all possible paths that a 
Brownian particle, located  at $\omega(0)$  at time $t=0,$ might  follow as time  progresses.
For each $x \in  X,$
let $\Omega_x=\Omega_x(X,\Lc)$ be the  space  of all continuous
leafwise paths  starting at $x$ in $(X,\Lc),$ that is,
$$
\Omega_x:=\left\lbrace   \omega\in \Omega:\  \omega(0)=x\right\rbrace.
$$
For each $x\in X,$    the  following  mapping 
\begin{equation}\label{e:Omega_0_vs_Omega_x}
\Omega_0\ni\omega \mapsto  \phi_x\circ\omega\quad
 \text{maps}\quad  \Omega_0\quad\text{bijectively onto}\quad \Omega_x,
 \end{equation}
 where   $\phi_x:\D\to L_x$ is given in (\ref{e:covering_map}).   
Using this bijection  
we obtain   a  natural $\sigma$-algebra    $ \Ac_x$ on  the  space $\Omega_x,$ and  a natural    probability (Wiener) measure $W_x$  on $\Ac_x$  as follows:
\begin{equation}\label{e:W_x}
 \Ac_x:=\{ \phi_x\circ A:\  A\in \Ac_0\}\quad\text{and}\quad    W_x(\phi_x\circ A):=W_0(A),\qquad   A\in  \Ac_0,
\end{equation}
   where $\phi_x\circ A:= \{\phi_x\circ \omega:\ \omega\in A \}\subset \Omega_x.$

For any   function $F\in L^1(\Omega_x,\Ac_x,W_x),$
 the {\it  expectation} of $F$ at $x$ is  the  number
\begin{equation}\label{e:expectation}
\E_x[F]:=\int_{\Omega_x} F(\omega)dW_x(\omega).
\end{equation}
  It is well-known (see \cite[Proposition C.3.8]{CandelConlon2}) that for any  measurable  bounded function $f$ on $L_x,$
\begin{equation}\label{e:expectation_vs_diffusion}
\E_{x}[f(\bullet(t))]=(D_tf)(x),\qquad  t\in\R^+,
\end{equation} 
where $f(\bullet(t))$ is  the  function  given by $\Omega\ni\omega\mapsto f(\omega(t)).$
 
 \subsection{Specialization and several  classes of  cocycles}
 \label{ss:moderate_cocycle}

First we recall some notions and results from \cite[Section 9.1]{NguyenVietAnh1}.
 Fix a  point $x\in X$   and let
   $\phi_{x}:\ \D\to L=L_{x}$   be the universal covering map    given in (\ref{e:covering_map}).    
  We focus   on the leaf $L$  and consider the following {\it projectivization} of $\mathcal A:$ 
  \begin{equation}\label{e:projectivization}
\mathcal A(\omega,t)u:=\left\lbrack  \mathcal A(\omega,t)\tilde u \right\rbrack \ \text{and}\    \|\mathcal A(\omega,t)u\|:={\| \mathcal A(\omega,t)\tilde u \|\over \|\tilde  u\|}\ \text{for}\ t\in\R^+\ \text{and}\  u\in \P^{d-1}(\K),
   \end{equation}
 where  $\tilde u$ is any element in $\K^d\setminus\{0\}$   such that  $u=[\tilde u] .$
  Here $[\cdot]:\ \K^d\setminus \{0\}\to\P^{d-1}(\K)$  is the canonical  projection.
 For each $u\in\P^{d-1}(\K) ,$ consider  
      the  function  $f_{x,u}:\  \D\to \R$ defined by
\begin{equation}\label{e:specialization}
f_{x,u}(\zeta):= \log \| {\mathcal A}(\phi_x\circ\omega,1)u \|,\qquad  \zeta \in \D,
\end{equation}
where  $\omega\in \Omega_0$ is any path  such that  $\omega(1)=\zeta.$ 
This   definition is  well-defined   because of the homotopy law for $\mathcal A$ (see Definition \ref{D:cocycle}) and of the  simple connectedness  of $\D.$
Following \cite{NguyenVietAnh1},
  $f_{x,u}$ is  said  to be  the {\it specialization}  of $\mathcal A$ at $(x,u).$

By   \cite[identities (9.5) and  (9.8)]{NguyenVietAnh1}, we have that
\begin{equation}\label{e:varphi_n_diffusion}
f_{x,u}(0)=0  \quad\textrm{and}\quad  \E_x[\log {\|\mathcal A(\bullet,t)u\|    }  ] =  
 (D_t f_{x,u})(0),\qquad t\in\R^+,
\end{equation}
where  $(D_t)_{t\in\R^+}$ is the family of  diffusion  operators associated with  $(\D,g_P).$ 

Next, we recall  from \cite {NguyenVietAnh1} two conversion rules  for changing  specializations in the  same leaf. 
For this  purpose  let $y\in L$ and pick $\eta\in \phi_x^{-1}(y).$
Define  $v:=[\mathcal A(\phi_x\circ \omega,1) u],$  where $\omega\in \Omega_0$  is  a  leafwise path with $\omega(1)=\eta.$
As a  consequence of the  multiplicative law in Definition \ref{D:cocycle}, the  first  conversion rule (see \cite[identity (9.6)] {NguyenVietAnh1}) states that 
\begin{equation}\label{e:change_spec}
f_{y,v}(\zeta)=f_{x,u}(\zeta)-f_{x,u}(\eta),\qquad \zeta\in\D.
\end{equation}
 We deduce  from
  (\ref{e:varphi_n_diffusion})-(\ref{e:change_spec})  and  the identity 
$D_p\mathbf{1}=\mathbf{1}$ in (\ref{e:semi_group}) the following    second conversion rule  (see \cite[identity (9.9)]{NguyenVietAnh1})
\begin{equation}\label{e:exp_conversion_rule}
 \E_y[\log {\|\mathcal A(\bullet,t)v\|    }  ] =(D_t f_{x,u})(\eta) -f_{x,u}(\eta).
    \end{equation}    
    
Let $\Delta$ be the  Laplacian on the Poincar\'e disc $(\D,g_P),$ that is, for every function $f\in \Cc^2(\D),$
$$
(\Delta  f) g_P=i\ddbar f \qquad\text{on}\ \D.
$$
For every function $f\in \Cc^1(\D),$ let $|df|_P$ be the length  of  the differential $df$ with respect to $g_P,$
that is,   $|df|_P=|df|\cdot g_P^{-1/2}$ on $\D,$ where $|df|$ denotes the Euclidean norm of $df.$  Let $\dist_P$ denote the Poincar\'e distance on $(\D,g_P).$
Inspired by  Definition 8.3 and 8.4 in Candel \cite{Candel2},  we have the  following
\begin{definition}\label{D:moderate_function} \rm  Let $h$ be a real-valued function defined on   $\D$
and let $c,\alpha>0.$   

 $\bullet$ $h$   is called    {\it  moderate}
with   constant $c$  if
$$
\log |h(y)-h(z)|\leq  c\dist_P(y,z)+c,\qquad  y,z\in \D.
$$ 

 $\bullet$ $h$   is called {\it  H\"older of order $\alpha$}
with constant $c$  if
$$
 |h(y)-h(z)|\leq  c\big(\dist_P(y,z)\big)^\alpha+c,\qquad  y,z\in \D.
$$ 

$\bullet$ $h$   is called {\it  Lipschitz}
with constant $c$  if it is   H\"older of order $1$ with constant $c.$
  \end{definition}
 Notice that our definition  of H\"older  functions is   different  from the classical one  since  we  are only concerned about  the quotient
 ${|h(y)-h(z)|/(\dist_P(y,z))^\alpha}$   when $\dist_P(y,z)$ is  large  enough. 

 Now  we are in the position to  formulate   new classes of cocycles.
 \begin{definition}\label{D:moderate_cocycle}\rm 
 Let  $\mathcal A$  be a cocycle.  For every $(x,u)\in X\times\P^{d-1}(\K),$ let $f_{x,u}$ denote, as usual,  the  specialization of $\mathcal A$ at $(x,u).$ 
  
$\bullet$  $\mathcal A$ is called  {\it  moderate} if there  is a constant $c>0$ 
 such that
 for every $(x,u)\in X\times\P^{d-1}(\K),$   $f_{x,u}$  is a  moderate function with constant $c.$

 $\bullet$ A moderate  cocycle  $\mathcal A$ is called  {\it strongly  moderate}  if it is    leafwise $\Cc^2$-differentiable cocycle  and if there  is   a constant $c>0$ 
 such that
 for every $(x,u)\in X\times\P^{d-1}(\K),$
    $|\Delta f_{x,u}|\leq  c$ on $\D.$ 
 
 $\bullet$   $\mathcal A$ is called  {\it H\"older} if there is  $\alpha>0$ such that  
 for every $(x,u)\in X\times\P^{d-1}(\K),$   $f_{x,u}$  is a  H\"older  function of  order $\alpha.$ 
In this  context we also  say   that $\mathcal A$ is  {\it H\"older of order $\alpha$}.
If, moreover, there is  a constant $c>0$ such that  for every $(x,u)\in X\times\P^{d-1}(\K),$   $f_{x,u}$  is a  H\"older  function of  order $\alpha$  with constant $c,$ then we say that  $\mathcal A$ is  {\it uniformly  H\"older (of order $\alpha$)}. 
 
  $\bullet$  $\mathcal A$ is called  {\it Lipschitz}  (resp. {\it uniformly  Lipschitz}) if $\mathcal A$ is H\"older (resp.  uniformly  H\"older)  of order $1$.
\end{definition}
 
  \begin{remark}\label{r:cocycles} \rm 
 \begin{enumerate}
 
\item As an immediate consequence of Definition \ref{D:moderate_cocycle},
the  class of    H\"older (resp.  uniformly H\"older) cocycles $\mathcal A$ of order $\alpha$ is  increasing
  in $\alpha.$

 \item It is  worthy noting the following difference between  a moderate  cocycle and  a H\"older one. For 
a moderate  cocycle, each  specialization $f_{x,u}$  is a  moderate function with the same
constant $c;$ whereas for a
 H\"older  cocycle of order $\alpha,$ each  specialization $f_{x,u}$  is a  H\"older function of order $\alpha$ with some 
constant $c_{x,u}$ which depends  on $x$ and $u.$  So a   moderate cocycle  need not to be  H\"older, and vice versa.
Clearly,  every uniformly H\"older  cocycle of order $\alpha$ is   H\"older of order $\alpha.$ 
However, using  Definition \ref{D:moderate_function} and  Definition \ref{D:moderate_cocycle}, it can be checked  that a uniformly  H\"older  cocycle is  moderate. 
As a partial converse of the last fact, 
it is  shown in Lemma  
\ref{L:strongly_moderate} below that  a  strongly moderate cocycle is necessarily uniformly Lipschitz.
 
\item
Using formula  (\ref{e:change_spec}),  Definition \ref{D:moderate_cocycle} reduces  to
asking the desired properties of $f_{x,u}$  for only one point $x$ in each leaf 
$L$ of the lamination.
\end{enumerate}
\end{remark}

Strongly moderate and uniformly H\"older cocycles exist in abundance.   Here is   a  simple  sufficient criterion.
 \begin{proposition}
\label{P:moderate_criterion} 
A  $\Cc^2$-differentiable cocycle  $\mathcal A$ on a hyperbolic  Riemann surface  lamination $(X,\Lc)$ with $X$ compact is both
strongly moderate
and uniformly Lipschitz.
 \end{proposition}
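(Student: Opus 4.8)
The plan is to push the uniform $\Cc^2$-bounds that the local expressions of $\mathcal A$ enjoy — available precisely because $X$ is compact — down to uniform estimates on the specializations $f_{x,u}$ living on the Poincar\'e disc. First I would fix, once and for all, a finite atlas of laminated charts $\Phi_p\colon\U_p\to\B_p\times\T_p$ with $\B_p$ a disc, together with relatively compact sub-flow-boxes still covering $X$. On this finite data the local expressions $A_p(y,z,t)$, their inverses $A_p(z,y,t)$, and all their partial derivatives in $(y,z)$ of total order $\le 2$ are bounded by a single constant $K$; moreover $A_p(y,y,t)=\id$ (the identity and homotopy laws applied to the constant path), so the mean value inequality gives $\|A_p(y,z,t)^{\pm 1}-\id\|\le K|y-z|$. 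I would then use the standard fact that for a compact $\Cc^2$-smooth hyperbolic lamination the leafwise Poincar\'e metric is transversally continuous, hence uniformly comparable in each chart to the ambient Euclidean metric, to fix $\epsilon_0>0$ so small that every leafwise path of Poincar\'e length $\le\epsilon_0$ issuing from any point of $X$ stays inside a single plaque of the atlas. Combining these, there is $K'>0$ such that whenever two points $p,q$ lie in a common plaque at leafwise Poincar\'e distance $\le\epsilon_0$ and $\sigma$ is the short connecting leaf path, one has $\|\mathcal A(\sigma,1)^{\pm 1}-\id\|\le K'\dist_{L_x}(p,q)$.

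Next I would prove the uniform Lipschitz estimate by a telescoping argument. Fix $x\in X$, $u\in\P^{d-1}(\K)$ and $\xi,\zeta\in\D$, and subdivide the Poincar\'e geodesic from $\xi$ to $\zeta$ into $N=\lceil\dist_P(\xi,\zeta)/\epsilon_0\rceil$ consecutive arcs of Poincar\'e length $\le\epsilon_0$, with successive endpoints $\xi=\zeta_0,\dots,\zeta_N=\zeta$. By the multiplicative and homotopy laws, $f_{x,u}(\zeta_k)-f_{x,u}(\zeta_{k-1})=\log\|\mathcal A(\sigma_k,1)u_{k-1}\|$ (the norm being that of (\ref{e:projectivization})), where $u_{k-1}\in\P^{d-1}(\K)$ is $u$ transported along a path from $0$ to $\zeta_{k-1}$ and $\sigma_k$ is the short image in $L_x$ of the $k$-th arc, joining $\phi_x(\zeta_{k-1})$ to $\phi_x(\zeta_k)$; these two points lie in one plaque since $\phi_x$ is a local isometry for the Poincar\'e metrics and the arc has length $\le\epsilon_0$. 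Hence by Step~1, $|f_{x,u}(\zeta_k)-f_{x,u}(\zeta_{k-1})|\le\log(1+K'\dist_P(\zeta_{k-1},\zeta_k))\le K'\dist_P(\zeta_{k-1},\zeta_k)$, and summing over $k$ gives $|f_{x,u}(\zeta)-f_{x,u}(\xi)|\le K'\dist_P(\xi,\zeta)$, with $K'$ independent of $x$ and $u$. This is exactly uniform Lipschitzness; moderateness then follows from Remark~\ref{r:cocycles}(2), and leafwise $\Cc^2$-differentiability of $\mathcal A$ is part of the hypothesis.

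It remains to bound $|\Delta f_{x,u}|$ uniformly on $\D$. Since the Poincar\'e Laplacian is invariant under the covering maps and, by (\ref{e:change_spec}), annihilates the additive constant appearing in a change of specialization, one has $\Delta f_{x,u}(\zeta_0)=\Delta f_{\phi_x(\zeta_0),v}(0)$, so it suffices to bound $|\Delta f_{y,w}(0)|$ uniformly over $y\in X$ and $w\in\P^{d-1}(\K)$. Near $0$, $f_{y,w}(\zeta)=\log\|A_p(a_y,\psi_y(\zeta),t)w\|$, where $\Phi_p$ is a chart around $y$, $a_y$ the coordinate of $y$, and $\psi_y$ the holomorphic coordinate expression of $\phi_y$ with $\psi_y(0)=a_y$; because $\Delta f(0)$ is a fixed linear combination of the Euclidean second derivatives of $f$ at $0$, I only need to control those. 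The first and second derivatives of $\log\|A_p(a_y,z,t)w\|$ in $z$ are bounded by Step~1 (the quantities $A_p$, $DA_p$, $D^2A_p$ are bounded above and $\|A_p(\cdot)w\|\ge K^{-1}$ is bounded below), while $|\psi_y'(0)|$ and $|\psi_y''(0)|$ are bounded by Cauchy's estimates, since $\psi_y$ maps a fixed disc $\{|\zeta|<\epsilon_1\}$ into the bounded chart $\B_p$ once $\epsilon_1$ is chosen (as in Step~1) so that the corresponding Poincar\'e ball around $y$ stays inside one plaque. Feeding these into the chain rule yields $|\Delta f_{y,w}|\le c$ on $\D$ with $c$ independent of $y$ and $w$; hence $\mathcal A$ is strongly moderate and the proof is complete.

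The main obstacle is Step~1: making the passage from the lamination to its leafwise universal covers genuinely uniform over $X$, in particular the uniform comparison of the leafwise Poincar\'e metric with the chart metric and the uniform choice of $\epsilon_0$ (equivalently, uniform bounded geometry and uniform bounds on the derivatives of the covering maps $\phi_x$ in chart coordinates). Once these geometric inputs are secured, the remaining arguments are just the telescoping estimate and an application of Cauchy's estimates.
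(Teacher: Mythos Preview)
Your proof is correct and follows essentially the same approach as the paper: both rely on Candel's transversal continuity of $g_P$ on a compact lamination together with the uniform $\Cc^2$-bounds on the local expressions and the change-of-specialization identity (\ref{e:change_spec}) to obtain $|df_{x,u}|_P\le c$ and $|\Delta f_{x,u}|\le c$ uniformly in $(x,u)$. The paper states these bounds in one line, while your telescoping argument for the Lipschitz estimate and your Cauchy-estimate computation for the Laplacian bound at $0$ are a correct and more explicit realization of the same strategy.
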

 \begin{proof}
 Since $X$ is  compact, we know from  Candel \cite{Candel} that  $g_p$ is transversally  continuous.  
 This, coupled with  the assumption that  $\mathcal A$ is  $\Cc^2$-differentiable,   equality (\ref{e:specialization}) and
formula  (\ref{e:change_spec}), implies that
\begin{equation*}
   |df_{x,u}|_P\leq c\quad\text{ and}\quad |\Delta f_{x,u}|\leq c\quad\text{for a constant $c>0$ independent of $x$ and $u.$} 
\end{equation*}
The bound  on $|df_{x,u}|_P$  yields that  $\mathcal A$ is  uniformly Lipschitz, hence
moderate by Item 2. in Remark \ref{r:cocycles}.
This, coupled with 
the bound on  $\Delta f_{x,u}$  implies that $\mathcal A$ is strongly moderate.
 \end{proof}
 \begin{example}\label{ex:2}\rm  
 Let  $(M,\Lc,E)$ be  a transversally $\Cc^2$-smooth (resp. transversally  holomorphic)   singular foliation  by hyperbolic Riemann surfaces with the set of singularities $E$
in a Riemannian manifold (resp. Hermitian complex  manifold) $M.$
Consider  a leafwise saturated, compact   set $X\subset M\setminus E.$  
So the  restriction of the foliation  $(M\setminus E,\Lc)$ to $X$ gives an inherited   lamination $(X,\Lc).$ Moreover,
the   holonomy  cocycle of $(M\setminus E,\Lc)$ induces, by restriction, an inherited  $\Cc^2$-differentiable cocycle $\mathcal A$
on $(X,\Lc).$
By Proposition \ref{P:moderate_criterion}, $\mathcal A$ is  strongly  moderate and uniformly
 Lipschitz.  
 \end{example}

 \section{Proofs of  Theorem  \ref{T:VA} and  the first part of the Main Theorem}
 \label{S:leaf-directions}

We keep the hypotheses  and  notation of  Theorem  \ref{T:VA}.
In what follows,  let $\mathcal A^+:=\mathcal A$ and $\mathcal A^-:={\mathcal A}^{-1},$  and  write 
$\mathcal A^\pm$ for both $\mathcal A^+$ and $\mathcal A^-.$
 \begin{lemma}\label{L:sup_integrability}
  For every $t\in\R^+$ there is  a constant $c=c_t>0$ such that 
$$\int_{\Omega_x} \sup_{0\leq s\leq t} |\log{\|\mathcal A^\pm(\omega,s)\|}| dW_x(\omega)\leq c \qquad\text{for all}\ x\in X.$$
 \end{lemma}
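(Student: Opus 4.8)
\emph{Proof plan.} The strategy is to transport the estimate to the Poincar\'e disc through the universal covering maps, so that it reduces to a single exponential moment bound for hyperbolic Brownian motion that does not involve $x$.

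\emph{Step 1: from the cocycle norm to the specialization.} Fix $x\in X$. Every $\omega\in\Omega_x$ is of the form $\omega=\phi_x\circ\tilde\omega$ with a unique $\tilde\omega\in\Omega_0$. Using the homotopy law of Definition~\ref{D:cocycle} together with the simple connectedness of $\D$ — the segment $\tilde\omega|_{[0,s]}$ is homotopic rel endpoints in $\D$ to any other path ending at $\tilde\omega(s)$, and $\phi_x$ carries such a homotopy down to $L_x$ — I get, for all $u\in\P^{d-1}(\K)$ and $s\in\R^+$,
$$
\log\|\mathcal A(\omega,s)u\|=f_{x,u}\bigl(\tilde\omega(s)\bigr),
$$
where $f_{x,u}$ is the specialization of $\mathcal A$ at $(x,u)$. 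Since $\|\mathcal A(\omega,s)\|=\sup_u\|\mathcal A(\omega,s)u\|$ and $\|\mathcal A^{-1}(\omega,s)\|=\bigl(\inf_u\|\mathcal A(\omega,s)u\|\bigr)^{-1}$, this yields
$$
\bigl|\log\|\mathcal A^{\pm}(\omega,s)\|\bigr|\ \le\ \sup_{u\in\P^{d-1}(\K)}\bigl|f_{x,u}\bigl(\tilde\omega(s)\bigr)\bigr|.
$$

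\emph{Step 2: moderateness and reduction to the disc.} By hypothesis there is $c>0$, independent of $x$ and $u$, with $\log|f_{x,u}(y)-f_{x,u}(z)|\le c\,\dist_P(y,z)+c$; since $f_{x,u}(0)=0$ by \eqref{e:varphi_n_diffusion}, this gives $|f_{x,u}(\zeta)|\le e^{c}\,e^{c\,\dist_P(\zeta,0)}$ on $\D$. Hence
$$
\sup_{0\le s\le t}\bigl|\log\|\mathcal A^{\pm}(\omega,s)\|\bigr|\ \le\ e^{c}\,e^{c\,r^{*}_{t}(\tilde\omega)},\qquad r^{*}_{t}(\tilde\omega):=\sup_{0\le s\le t}\dist_P\bigl(\tilde\omega(s),0\bigr).
$$
As $W_x$ is, by \eqref{e:W_x}, the push-forward of the Wiener measure $W_0$ of $(\D,g_P)$ under $\tilde\omega\mapsto\phi_x\circ\tilde\omega$, and $r^{*}_{t}$ does not depend on $x$, the integral in the statement is at most $e^{c}\int_{\Omega_0}e^{c\,r^{*}_{t}}\,dW_0$, which is independent of $x$. (The integrand is Borel measurable: $(\omega,s)\mapsto\mathcal A(\omega,s)$ is jointly measurable by the measurable law, and $s\mapsto\dist_P(\tilde\omega(s),0)$ is continuous.)

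\emph{Step 3: exponential moments of the running maximum.} It remains to prove $\int_{\Omega_0}e^{c\,r^{*}_{t}}\,dW_0<\infty$, i.e.\ that the maximal displacement of Brownian motion $(B_s)$ on $(\D,g_P)$ over $[0,t]$ has finite exponential moments of all orders. I would use a Lyapunov supermartingale: fix $\alpha\ge\max(c,1)+1$ and set $F(\zeta):=\bigl(\cosh\dist_P(\zeta,0)\bigr)^{\alpha}$, which is smooth on $\D$; a computation in geodesic polar coordinates (the metric $g_P$ has constant curvature $-1$) gives $\Delta F\le(\alpha^{2}+\alpha)F$ everywhere. Since the heat generator here is $\Delta$, It\^o's formula shows $dY_s\le dM_s$ for a local martingale $M$, where $Y_s:=e^{-(\alpha^{2}+\alpha)s}F(B_s)$; being non-negative, $Y$ is a supermartingale with $Y_0=F(0)=1$. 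Doob's maximal inequality for non-negative supermartingales, combined with $F(B_s)\ge 2^{-\alpha}e^{\alpha\,\dist_P(B_s,0)}$, gives
$$
W_0\bigl(r^{*}_{t}\ge a\bigr)\ \le\ 2^{\alpha}\,e^{(\alpha^{2}+\alpha)t}\,e^{-\alpha a},\qquad a\ge 0,
$$
and since $\alpha>c$, integrating this against $d(e^{ca})$ shows $\int_{\Omega_0}e^{c\,r^{*}_{t}}\,dW_0<\infty$. One may then take $c_t:=e^{c}\int_{\Omega_0}e^{c\,r^{*}_{t}}\,dW_0$.

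\emph{Main difficulty.} The only genuinely substantial point is Step~3 — exponential integrability of the running maximum of hyperbolic Brownian motion on a fixed time interval. The choice $F=(\cosh\dist_P(\cdot,0))^{\alpha}$ disposes of it cleanly because $\cosh\dist_P(\cdot,0)$ is a smooth radial near-eigenfunction of the hyperbolic Laplacian; alternatively one could quote standard sub-Gaussian tail bounds for $\sup_{s\le t}\dist_P(B_s,0)$ valid on manifolds of bounded geometry. The residual bookkeeping — joint measurability of $(\omega,s)\mapsto\mathcal A(\omega,s)$ and the normalization constant implicit in $g_P$ — is routine.
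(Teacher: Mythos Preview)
Your argument is correct and follows the same overall scheme as the paper: use moderateness to bound $|\log\|\mathcal A^{\pm}(\omega,s)\||$ by $\exp\bigl(c+c\,\dist_P(\tilde\omega(s),0)\bigr)$, push the integral down to $(\D,g_P)$ via \eqref{e:W_x}, and then control the exponential moment of the running maximum $r^{*}_t=\sup_{0\le s\le t}\dist_P(\tilde\omega(s),0)$.

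The only genuine difference lies in Step~3. The paper quotes a sub-Gaussian tail estimate from Candel (his Lemma~8.16 and Corollary~8.8),
\[
W_0\Bigl(\sup_{s\in[0,1]}\dist_P(\omega(s),0)>r\Bigr)<c''\,e^{-r^{2}/64},
\]
which immediately gives finite exponential moments of all orders. You instead build the estimate from scratch with the Lyapunov function $F=(\cosh\dist_P(\cdot,0))^{\alpha}$ and Doob's inequality, obtaining a weaker exponential tail $W_0(r^{*}_t\ge a)\lesssim e^{-\alpha a}$ for any prescribed $\alpha$, which is exactly what is needed. Your route is self-contained and avoids the external reference; the paper's route gives a sharper (Gaussian) decay at no extra cost once Candel's lemma is granted. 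Either way the conclusion is the same, and your bookkeeping in Steps~1--2 (via the specialization $f_{x,u}$ and $f_{x,u}(0)=0$) is a slightly more explicit rendering of what the paper compresses into a single displayed inequality.
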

\begin{proof}
 Assume without loss of generality that $t=1$  and let $x\in X.$ 
 Since  $\mathcal A$ is   moderate,
  there  exists a  constant $c' >0$ independent of $x$  such that, for  every universal covering map $\phi_x:\ \D\to L_x$ given 
 in (\ref{e:covering_map}),  we have that
 $$  |\log {\| \mathcal{A}^{\pm 1}(\phi_x\circ\omega,s)   \|}|
 \leq  \exp\big (c'+c'\dist_P(\omega(s),\omega(0))\big),\qquad  \omega\in\Omega_0,\ s\in\R^+.
$$ 
Recall  from \eqref{e:Omega_0_vs_Omega_x} and  \eqref{e:W_x} that 
$\Omega_0\ni\omega \mapsto  \phi_x\circ\omega\in\Omega_x$ is  a bijection  that induces  $W_x$  from $W_0.$
So
\begin{equation}\label{e:integrability}
\begin{split}
\int_{\Omega_x} \sup_{0\leq s\leq 1}|\log{\|\mathcal A^\pm(\omega,s)\|}| dW_x(\omega)&\leq 
\int_{\Omega_0}\exp  \Big(c'+ c'\cdot \sup_{0\leq s\leq 1} \dist_P(\omega(s),\omega(0))\Big)dW_0(\omega)\\
&=e^{c'}\int_{\Omega_0}\exp  \Big( c'\cdot \sup_{0\leq s\leq 1} \dist_P(\omega(s),\omega(0))\Big)dW_0(\omega)
.
\end{split}
\end{equation}
Moreover, for  every positive-valued  function $f\in L^1( \Omega_0,W_0)$  we  have by Fubini's theorem that
$$
\int_0^\infty W_0\left\lbrace  f(\omega)>r \right\rbrace dr =\int_{\Omega_0} f(\omega)dW_0(\omega).
$$
Applying  the  above  identity  to 
$$f(\omega):=\exp  \Big( c'\cdot \sup_{0\leq s\leq 1} \dist_P(\omega(s),\omega(0))\Big)\quad\text{for}\quad  \omega\in\Omega_0,$$
 it follows that   the right hand side of (\ref{e:integrability}) is  equal to
\begin{equation}\label{e:integrability_bis}
 e^{c'}\int_0^\infty W_0\left\{\omega\in\Omega_0:\ \exp\big ( c'\cdot  \sup_{0\leq s\leq 1}\dist_P(\omega(s),\omega(0))\big )  >r \right \}dr.
\end{equation}
On the other hand,   combining  Lemma 8.16 and Corollary 8.8   in   \cite{Candel2}, we  can show that
 there is  a   constant $c''>0$ such that for all $r\geq 1,$
 $$ W_0\left\{\omega\in\Omega_0:\  \sup_{s\in[0,1]}\dist_P(\omega(s),\omega(0))>r \right \}< c'' e^{-r^2/64}.$$
 This     implies  that the integral   in  (\ref{e:integrability_bis}) is  dominated by
\begin{eqnarray*}
 &\,&1+\int_1^\infty W_0\left\{\omega\in\Omega_0:\ \exp\big ( c'\cdot  \sup_{0\leq s\leq 1}\dist_P(\omega(s),\omega(0))\big )  >r \right \}dr\\
 &=&1+\int_1^\infty W_0\left\{\omega\in\Omega_0:\  \sup_{0\leq s\leq 1}\dist_P(\omega(s),\omega(0))\big )  >{\ln r\over c'} \right \}dr\\
& <& 1+ c''\int_1^\infty e^{-(\ln{r}/8c')^2}dr
<\infty.
\end{eqnarray*}
This, coupled with (\ref{e:integrability}), completes the proof. 
\end{proof}
\begin{remark}\label{R:integrability} \rm 
 The  proof of  the upper  bound  of the right hand side of \eqref{e:integrability} also   shows that for    $c',t>0,$ 
 $$
 \int_{\Omega_0}\exp { \Big(c'+ c'\cdot  \dist_P(\omega(t),\omega(0))\Big)}dW_0(\omega)<\infty.
 $$
\end{remark}

 Now  we  arrive at the
 \\
 {\bf End of the proof of Theorem   \ref{T:VA}.}
 Since $\mu$ is  a finite  measure,  Lemma \ref{L:sup_integrability}, applied to $t=1,$ gives that
$$\int_{x\in X} \Big (\int_{\Omega_x} \sup_{0\leq s\leq 1} |\log{\|\mathcal A^\pm(\omega,s)\|}| dW_x(\omega)\Big) d\mu(x)<\infty.$$
So $\mathcal A$ satisfies the integrability condition stated in  \cite[Theorem 3.7]{NguyenVietAnh1}.
 On the other hand,  by Theorem \ref{thm_harmonic_currents_vs_measures} (iii), we know that
   $\mu$ is  ergodic. Consequently, we may apply   \cite[Corollary 3.8 and Theorem 7.3]{NguyenVietAnh1}  and the theorem follows.
 \hfill $\square$

 To prove the first part of the Main Theorem,  we  need the following result on Brownian motion on the  Poincar\'e disc $(\D,g_P).$
\begin{lemma}\label{L:Ancona}
(i) For  $W_0$-almost every  $\omega\in\Omega_0,$  the limit $\omega(\infty):=\lim_{t\to\infty} \omega(t)$ exists and  is  a point in $\partial \D.$ In this  case  let  $\theta=\theta_\omega$ be the unique number in $[0,1)$ such that $e^{2\pi i\theta}=\omega(\infty),$ and 
denote by $\gamma_\omega$  the unit-speed geodesic  ray which is  the  radius of $\D$ issued from $0$ and  landing at $\omega(\infty).$
 \\
 (ii) For  every Borel set $B\subset\partial \D,$  we have that
$$
W_0\Big(\left\lbrace\omega\in\Omega_0:\ \exists\omega(\infty)\in B\right\rbrace\Big) =  \Leb\{\theta\in[0,1):\ e^{2\pi i\theta}\in B\}.
$$
Here $\Leb$ denotes the Lebesgue measure on $[0,1).$
\\ 
(iii) Let $\rho>1.$   Then, 
for  $W_0$-almost every  $\omega\in\Omega_0,$  there is a constant  $c_\omega>0$  such that
$$
\dist_P(\omega(t),\gamma_\omega(t))\leq c_\omega t^{1/2}(\log t)^\rho\qquad \text{for}\ t>2.
$$
\end{lemma}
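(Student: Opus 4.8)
\emph{Proof proposal.} The three assertions are classical facts about the Brownian motion of the Poincar\'e disc; the plan is to dispatch (i) and (ii) by soft arguments and to reduce (iii) --- the substantial point --- to the law of the iterated logarithm via the skew-product decomposition of $(\D,g_P)$-Brownian motion together with the hyperbolic law of cosines. (Alternatively, (iii) may be quoted from the quantitative boundary estimates of Lyons \cite{Lyons}, Cranston \cite{Cranston} and Ancona \cite{Ancona}; what follows is a self-contained account.)

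For (i) and (ii) I would use that $(\D,g_P)$ is a complete surface of constant curvature $-1$ which, being two-dimensional, is conformal to the Euclidean disc, so that $\Delta_{g_P}=\tfrac14(1-|\zeta|^2)^2\Delta_{\mathrm{Eucl}}$ and hence a $(\D,g_P)$-Brownian path $\omega$ started at $0$ is a time change $\omega(t)=B(c(t))$ of a Euclidean Brownian motion $B$ started at $0$. Since $(\D,g_P)$ is complete, $\omega$ cannot reach $\partial\D$ in finite time and cannot converge to an interior point, so the clock $c(t)$ increases to the Euclidean exit time $\tau$ of $\D$; therefore $\omega(t)\to B(\tau)\in\partial\D$ almost surely, which is (i). The law of $B(\tau)$ is the harmonic measure of $\D$ seen from $0$, that is, normalized arclength on $\partial\D$, which under $\theta\mapsto e^{2\pi i\theta}$ is the Lebesgue measure on $[0,1)$; this gives (ii). (Part (ii) also follows from rotational symmetry alone: rotations of $\D$ are $g_P$-isometries fixing $0$, hence preserve $\Omega_0$ and $W_0$, so the law of $\omega(\infty)$ is a rotation-invariant probability measure on $\partial\D$, and the only such measure is normalized arclength.)

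For (iii) I would pass to geodesic polar coordinates around $0$, writing $\omega(t)=(r(t),\vartheta(t))$ with $r(t)=\dist_P(\omega(t),0)$, so that $\gamma_\omega(t)=(t,\vartheta_\infty)$ with $\vartheta_\infty=\lim_t\vartheta(t)$ from (i). The hyperbolic law of cosines in curvature $-1$ gives
\[
\cosh\!\big(\dist_P(\omega(t),\gamma_\omega(t))\big)=\cosh\!\big(r(t)-t\big)+2\sinh\!\big(r(t)\big)\sinh(t)\,\sin^2\!\big(\tfrac12\delta(t)\big),\qquad \delta(t):=|\vartheta(t)-\vartheta_\infty|,
\]
whence, using $\tfrac12 e^x\le\cosh x\le e^x$ and $\sinh x\le\tfrac12 e^x$,
\[
\dist_P(\omega(t),\gamma_\omega(t))\le\mathrm{const}+|r(t)-t|+\max\!\big(0,\,2t+2\log\delta(t)\big).
\]
It then remains to bound the last two terms almost surely. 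By the skew-product, $r$ solves $r(t)=t+\sqrt2\,W_t+\int_0^t(\coth r(s)-1)\,ds$ with $W$ a standard Brownian motion, and the correction integral converges a.s.\ because $r(s)\to\infty$ at linear speed, so the law of the iterated logarithm yields $|r(t)-t|\le c_\omega\,t^{1/2}(\log\log t)^{1/2}$ for large $t$. Moreover $\vartheta$ is, after the time change $\tau(t)=2\int_1^t\sinh^{-2}r(s)\,ds$, a Brownian motion $\beta$ independent of $W$ (skew-product again), and $\tau(\infty)<\infty$ a.s.; hence $\delta(t)=|\beta(\tau(\infty))-\beta(\tau(t))|$ is an increment of $\beta$ over the time $\tau(\infty)-\tau(t)=2\int_t^\infty\sinh^{-2}r(s)\,ds$. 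Plugging the radial estimate $r(s)=s+O(s^{1/2}(\log\log s)^{1/2})$ into this integral gives $\tau(\infty)-\tau(t)=e^{-2t+O(t^{1/2}(\log\log t)^{1/2})}$, and the local law of the iterated logarithm for $\beta$ at the time $\tau(\infty)$ --- deterministic once we condition on $W$, by the independence --- gives $\delta(t)=e^{-t+O(t^{1/2}(\log\log t)^{1/2})}$, so that $2t+2\log\delta(t)=O(t^{1/2}(\log\log t)^{1/2})$ as well. Combining, $\dist_P(\omega(t),\gamma_\omega(t))\le c_\omega\,t^{1/2}(\log\log t)^{1/2}$ for large $t$; since $(\log\log t)^{1/2}\le(\log t)^\rho$ for any $\rho>0$ and all large $t$, and since for $t\in(2,t_0]$ the left-hand side is bounded (depending on $\omega$), the claimed inequality follows after enlarging $c_\omega$.

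The main obstacle is the almost-sure rate for the angular defect $\delta(t)$: one has to feed the radial estimate $r(s)=s+O(s^{1/2}(\log\log s)^{1/2})$ into both an upper and a lower bound for $\tau(\infty)-\tau(t)$, pinning it down up to a factor $e^{O(t^{1/2}(\log\log t)^{1/2})}$ (which uses the eventual monotonicity of $s\mapsto-2s+O(s^{1/2}(\log\log s)^{1/2})$ rather than a crude $\varepsilon s$ estimate), replace the random time $\tau(\infty)$ by a deterministic one by exploiting the independence of the radial and angular driving motions, and finally convert the exponential smallness of $\delta(t)$ back into a bound on $\dist_P$ through the law of cosines. The precise exponent $\rho>1$ is immaterial here --- any power of $\log t$, indeed $(\log\log t)^{1/2}$, works --- so one may alternatively just invoke Ancona \cite{Ancona}, Cranston \cite{Cranston} or Lyons \cite{Lyons}.
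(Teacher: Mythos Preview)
Your proposal is correct. For (i) and (ii) your conformal time-change / rotational-invariance argument is a clean substitute for the paper's bare citation of Ancona. For (iii) the paper takes the short route you mention at the very end: it quotes Ancona's logarithmic bound $\dist_P(\omega(t),\gamma_\omega(\R^+))\le c'_\omega\log t$ for the distance from $\omega(t)$ to the \emph{image} of the geodesic ray, observes that the nearest point on that image is $\gamma_\omega(s)$ with $s=\dist_P(\omega(t),0)$, quotes Lyons--Cranston for $|s-t|\le c'_\omega t^{1/2}(\log t)^\rho$, and adds the two via the triangle inequality. Your route is genuinely different: you bypass the distance-to-the-geodesic estimate entirely, using instead the hyperbolic law of cosines to split $\dist_P(\omega(t),\gamma_\omega(t))$ into a radial defect $|r(t)-t|$ and an angular term governed by $2t+2\log\delta(t)$, and you control both through the skew-product decomposition and the law of the iterated logarithm. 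This is longer but self-contained, and it actually yields the sharper bound $c_\omega t^{1/2}(\log\log t)^{1/2}$; the paper's quotation approach lands exactly at $t^{1/2}(\log t)^\rho$, which is why $\rho>1$ appears there. Two places in your write-up where a grader might press you: the sentence ``the correction integral converges a.s.\ because $r(s)\to\infty$ at linear speed'' is slightly circular as written (one should first compare $r$ with the drift-$1$ Brownian motion $t+\sqrt{2}W_t$ via $\coth r\ge 1$, get $r(s)\ge s(1-o(1))$, and \emph{then} conclude integrability of $\coth r-1$); and the appeal to the local LIL at the random time $\tau(\infty)$ needs the Fubini step you indicate (independence of $\beta$ from the radial motion makes $\tau(\infty)$ effectively deterministic for $\beta$). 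Both are routine once stated explicitly.
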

Roughly speaking, assertion (i) says that  Brownian trajectories issued from $0$ on the Poincar\'e disc are  shadowed by
the unit-speed geodesic  rays which are  radii of $\D.$   Assertion (ii)  states that these  radii are uniformly distributed  with respect to the normalized rotation
measure on $\partial \D.$ Moreover, assertion (iii)  quantifies the distance, as the time progresses,  between a Brownian trajectory and its corresponding unit-speed geodesic ray.
\begin{proof} Assertion (i) and (ii) are classical, 
see, for example,    Ancona's work \cite[Section 7]{Ancona}.

To prove  assertion (iii) recall  from \cite[Theorem 7.3]{Ancona} that for $W_0$-almost  every $w\in\Omega_0,$
there is  a constant $c'_\omega>0$ such that
$$
\dist_P(\omega(t),\gamma_\omega(\R^+))\leq c'_\omega\log t\qquad \text{for}\ t>2.
$$
Since the   point $\gamma_\omega(s )$ with   $s=s_{\omega,t} :=\dist( \omega(t),0     )$
is the nearest point  in $\gamma_\omega(\R^+)$  with respect to  the point $\omega(t),$  it follows that
\begin{equation*}
\dist_P\left(\omega(t),\gamma_\omega (s)\right)\leq c'_\omega\log t\qquad \text{for}\ t>2.
\end{equation*} 
On the  other hand, 
recall from Lyons' work \cite[pp. 3-4]{Lyons} that  for $W_0$-almost  every $w\in\Omega_0,$ there is  a constant $c'_\omega>0$ such that
$$
|\dist_P(\omega(t), 0) -t|\leq c'_\omega  t^{1/2}(\log t)^\rho\qquad \text{for}\ t>2 .
$$
It is  worthy noting that  Lyons' estimate  relies  on a previous result  of Cranston \cite{Cranston} on the boundary behavior of Brownian trajectories.
This, combined  with the previous  estimate, implies 
that
\begin{eqnarray*}
\dist_P(\omega(t),\gamma_\omega(t))&\leq & \dist_P\left(\omega(t),\gamma_\omega (s)\right)+ \dist_P\left(\gamma_\omega(t),\gamma_\omega (s)\right)\\
&=&\dist_P\left(\omega(t),\gamma_\omega (s)\right)+|\dist_P(\omega(t), 0) -t|\\
&\leq & c'_\omega\log t+c'_\omega  t^{1/2}(\log t)^\rho\qquad \text{for}\ t>2 .
\end{eqnarray*}
Choosing $c_\omega:=c'_\omega(1+(\log 2)^{1-\rho}),$
assertion (iii) follows.
\end{proof}

Now  we are in the  position to  prove  the short    part of the Main Theorem.
In fact, we  are partly inspired  by the proof of \cite[Lemma 6.12]{DeroinDupont},  where  Deroin and Dupont investigate  a particular   cocycle of rank 1. But in  their context,  their  method is  only applicable for  Lipschitz cocycles.  
\\
{\bf Proof of assertion (i)  of Theorem  \ref{T:main}.} We only  give  the proof of equality 
(\ref{e:Lyapunov_geometric})
 since
equality (\ref{e:Lyapunov_geometric_max}) can be demonstrated in the  same  way. The  assumption  of the theorem  allows us to apply Theorem
\ref{T:VA} to the cocycle $\mathcal A.$ Consequently,  we obtain properties (i) and (ii) of   Theorem
\ref{T:VA}.
In the  rest  of the proof we keep  the  notation $Y,$ $m,$ $V_i(x),$ $\chi_i$  introduced  in Theorem
\ref{T:VA}. 
Fix an integer  $1\leq i_0\leq  m$ and fix a point $x_0\in  Y.$ Let $v$ be an arbitrary vector in $V_{i_0}(x_0)\setminus V_{i_0+1}(x_0).$ 
  We need to show that
\begin{equation}\label{e:Lyapunov_geometric_new}
 \lim_{R\to\infty}\Ec(x_0,\theta,R,v)= \chi_{i_0}\qquad\textrm{for $\Leb$-almost every $\theta\in[0,1).$ }
\end{equation}
Let $\phi=\phi_{x_0}:\ \D\to L:=L_{x_0}$    be the universal covering map    given in (\ref{e:covering_map}).
Fix $\rho>1.$
Let $\omega$ be  a  {\it generic} path (in the sense of the  measure $W_0$)  in $\Omega_0.$ So 
 $\omega$ satisfies Lemma \ref{L:Ancona}   (i) and (iii).
Writing  $\gamma:=\gamma_\omega,$ we get  a constant $c_\omega$   such that   
 \begin{equation}\label{e:Ancona}
\dist_P(\omega(R),\gamma(R))\leq  c_\omega R^{1/2} (\log R)^\rho\qquad  \textrm{for $R>2.$}
\end{equation}
On the other hand, by (\ref{e:Lyapunov}),  we also have that
 \begin{equation}
\label{e:Lyapunov_new}
\lim\limits_{R\to \infty} {1\over  R}  \log {\| \mathcal{A}(\phi\circ\omega,R)v   \|\over  \| v\|}  =\chi_{i_0}.    
\end{equation}
 For every $R>2$ let $x_R:=\omega(R)$  and $ y_R:=\gamma(R).$  
By  (\ref{e:specialization}),
we have that
 \begin{equation}\label{e:difference}
  {1\over  R}  \log {\| \mathcal{A}(\phi\circ\omega,R)v   \|\over  \| v\|}
 - {1\over  R}  \log {\| \mathcal{A}(\phi\circ\gamma,R)v   \|\over  \| v\|}=  {1\over  R} (f_{x_0,[v]}(x_R)-f_{x_0,[v]}(y_R) )  .
 \end{equation}
 Since $\mathcal A$ is   H\"older of order $\alpha<2,$ there is a  constant $c$ depending only on $\mathcal A$ and $(x_0,v)$  such that
 the modulus of the right hand  side  of (\ref{e:difference}) is  bounded  by 
 \begin{equation*}
R^{-1} \left (c\big (\dist_P(x_R,y_R)\big)^\alpha +c\right)  . 
 \end{equation*} On the other hand, by (\ref{e:Ancona}), we know that $ \dist_P(x_R,y_R)= \dist_P(\omega(R),\gamma(R))\leq c_\omega R^{1/2}(\log R)^\rho.$ 
Consequently, using  that $\alpha <2,$ it follows that the right hand  side  of (\ref{e:difference})  tends to $0$ as $R\to\infty.$
This, combined with  (\ref{e:difference}) and  (\ref{e:Lyapunov_new}), implies that
$$
\lim\limits_{R\to \infty} {1\over  R}  \log {\| \mathcal{A}(\phi\circ\gamma,R)v   \|\over  \| v\|}  =\chi_{i_0},
\qquad \text{for}\ W_0\text{-almost every}\ \omega\in \Omega_0.
$$
Since  $\phi\circ\gamma=\gamma_{x_0,\theta},$ where   $\theta:=\theta_\omega$ (see Lemma \ref{L:Ancona}) and  $\gamma_{x_0,\theta}$ is given in 
\eqref{eq_geodesics}, the last
equality may be rewritten as
$$
\lim\limits_{R\to \infty} {1\over  R}  \log {\| \mathcal{A}(\gamma_{x_0,\theta}  ,R)v   \|\over  \| v\|}  =\chi_{i_0},
\qquad \text{for}\ W_0\text{-almost every}\ \omega\in \Omega_0\ \text{and for}\ \theta:=\theta_\omega.
$$
Putting  this  together  with  Lemma \ref{L:Ancona} (ii),  (\ref{e:Lyapunov_geometric_new})
follows. The proof of Part (i) of the Main Theorem  is thereby completed.  
\hfill $\square$

 \section{Expectation convergence}
 \label{S:expectation}
  
  \subsection{Statement of the expectation  convergence  and a reduction}\label{SS:reduction}
  
The  main purpose of this  section is to prove  the following expectation convergence which  is a  key  ingredient in   the proof of the  second part  of the Main Theorem.
  \begin{theorem} \label{T:expectation} 
  Let $(X,\Lc)$ be  a hyperbolic Riemann surface lamination and $T$  a harmonic current. 
 Let 
  $\mu$ be the measure associated to $T$ by (\ref{E:harmonic_measure}).
Assume, moreover, that  $T$ is  extremal   and that $\mu$  is a   harmonic  measure.
Consider  a   moderate cocycle
$\mathcal{A}:\ \Omega\times \R^+ \to  \GL(d,\K)  .  $  Let $Y$ be  a leafwise saturated Borel set   of full $T$-measure 
which satisfies the conclusion of  Theorem \ref{T:VA}. Let 
$\K^d=\oplus_{i=1}^m H_i(x),$ $x\in Y,$ be  the Oseledec  decomposition given by Theorem \ref{T:VA}   and      $\chi_m<\chi_{m-1}<\cdots
<\chi_2<\chi_1$  the  corresponding     Lyapunov exponents.
 For each $1\leq i\leq m$ and $n\in\N\setminus\{0\},$  define two maximal and minimal functions    $\Mb_{i,n}, \mb_{i,n}:\ Y\to [-\infty,\infty]$ by 
\begin{equation}\label{e:M_n_and_m_n}
\begin{split}
\Mb_{i,n}(x)&:=\sup_{v\in H_i(x)\setminus \{0\}}{1\over  n}\E_x\left[   \log {\| \mathcal A(\bullet,n)v   \|\over  \| v\|}\right],\qquad x\in Y;
\\
\mb_{i,n}(x)&:=\inf_{v\in H_i(x)\setminus \{0\}}{1\over  n}\E_x\left[   \log {\| \mathcal A(\bullet,n)v   \|\over  \| v\|}\right],\qquad x\in Y,
\end{split}
\end{equation}
  where  $\log {\| \mathcal A(\bullet,n)v   \|\over  \| v\|}$ denotes the function $$\Omega_x\ni\omega\mapsto  \log {\| \mathcal A(\omega,n)v   \|\over  \| v\|}.$$
Then    there is  a   Borel set $Y_0\subset Y$   of full $T$-measure  such that 
 $$
 \lim_{n\to\infty} \left[   \mb_{i,n}(x), \Mb_{i,n}(x)\right] = \chi_i ,\qquad  x\in Y_0 \quad\text{and}\quad  1\leq i\leq m.$$
 \end{theorem}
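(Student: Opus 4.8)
The plan is to realise the program sketched in the Introduction: adapt the Ledrappier-type characterisation of the Lyapunov spectrum from \cite{NguyenVietAnh1} and combine it with the ratio ergodic theorem of Akcoglu--Sucheston \cite{AkcogluSucheston}. Fix $1\le i\le m$ and for $x\in Y$ put
\[
\Phi_{i,n}(x):=\E_x\big[\log\|\mathcal A(\bullet,n)|_{H_i(x)}\|\big],\qquad
\Psi_{i,n}(x):=\E_x\big[\log\min_{v\in H_i(x),\,\|v\|=1}\|\mathcal A(\bullet,n)v\|\big],
\]
where $\mathcal A(\bullet,n)|_{H_i(x)}$ is the restriction of $\mathcal A(\bullet,n)$ to the holonomy-invariant subspace $H_i(x)$ (Theorem \ref{T:VA}(i)); both functions lie in $L^1(\mu)$ by Lemma \ref{L:sup_integrability}. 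Interchanging $\sup$ and $\inf$ with $\E_x$ in the definitions (\ref{e:M_n_and_m_n}) gives the elementary sandwich $\tfrac1n\Psi_{i,n}(x)\le\mb_{i,n}(x)\le\Mb_{i,n}(x)\le\tfrac1n\Phi_{i,n}(x)$ for $x\in Y$, $n\ge1$. Hence it suffices to prove, for $\mu$-a.e. $x$, the two one-sided bounds $\limsup_n\frac1n\Phi_{i,n}(x)\le\chi_i$ and $\liminf_n\frac1n\Psi_{i,n}(x)\ge\chi_i$; together with $\mb_{i,n}\le\Mb_{i,n}$ they squeeze $\liminf\mb_{i,n}$, $\limsup\mb_{i,n}$, $\liminf\Mb_{i,n}$, $\limsup\Mb_{i,n}$ all to $\chi_i$, which is exactly $\lim_n[\mb_{i,n}(x),\Mb_{i,n}(x)]=\chi_i$.

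I would treat $\Phi_{i,n}$ in detail; the argument for $\Psi_{i,n}$ is symmetric, with subadditivity replaced by superadditivity and $\limsup$/$\inf$/$\le$ by $\liminf$/$\sup$/$\ge$. Using the multiplicative law of $\mathcal A$, the holonomy invariance $\mathcal A(\omega,n)H_i(\omega(0))=H_i(\omega(n))$, the Markov property of the leafwise Brownian motion and formula (\ref{e:expectation_vs_diffusion}), one obtains, for $\mu$-a.e. $x$ (so that $L_x\subset Y$ and $D_n\Phi_{i,k}(x)$ only sees values of $\Phi_{i,k}$ on $L_x$), the subadditive relation over the diffusion semigroup
\[
\Phi_{i,n+k}(x)\ \le\ \Phi_{i,n}(x)+(D_n\Phi_{i,k})(x),\qquad n,k\ge1 .
\]
Since $\mu$ is a harmonic measure it is $D_t$-invariant for all $t$; integrating against $\mu$ shows $n\mapsto\int_X\Phi_{i,n}\,d\mu$ is subadditive, so $\frac1n\int_X\Phi_{i,n}\,d\mu$ decreases to $\ell_i:=\inf_n\frac1n\int_X\Phi_{i,n}\,d\mu$. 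To evaluate $\ell_i$, set $\mathbb{W}:=\mu(X)^{-1}\int_X W_x\,d\mu(x)$ on $\Omega$; harmonicity of $\mu$ makes the unit shift $\sigma_1$ preserve $\mathbb{W}$, and $F_n(\omega):=\log\|\mathcal A(\omega,n)|_{H_i(\omega(0))}\|$ is a subadditive cocycle over $(\Omega,\mathbb{W},\sigma_1)$ with $F_1\in L^1(\mathbb{W})$ (Lemma \ref{L:sup_integrability}). By Kingman's subadditive ergodic theorem $\frac1nF_n$ converges $\mathbb{W}$-a.e., with $\mathbb{W}$-mean $\ell_i/\mu(X)$. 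On the other hand, for $\mu$-a.e. $x$ and $W_x$-a.e. $\omega$ one has $\frac1nF_n(\omega)\to\chi_i$: applying (\ref{e:Lyapunov}) to the vectors of a basis of $H_i(x)$ yields $\limsup_n\frac1nF_n(\omega)\le\chi_i$ (equivalence of norms on $H_i(x)$) and $\liminf_n\frac1nF_n(\omega)\ge\chi_i$ (one fixed basis vector), because every nonzero $v\in H_i(x)$ lies in $V_i(x)\setminus V_{i+1}(x)$ and so has exponent $\chi_i$. Hence $\ell_i=\chi_i\,\mu(X)$; the symmetric computation for $\Psi_{i,n}$ (its smallest-singular-value terms also grow at rate $\chi_i$, since $\chi_i$ is the only Lyapunov exponent of $\mathcal A$ on $H_i$) gives $\sup_n\frac1n\int_X\Psi_{i,n}\,d\mu=\chi_i\,\mu(X)$.

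The passage from this $L^1$-statement to the pointwise bound is the heart of the proof and is where the ratio ergodic theorem enters. Fix $k\ge1$; iterating the subadditive relation along $\{D_{jk}=D_k^{\,j}\}_{j\ge0}$ and writing $n=qk+r$ with $0\le r<k$ gives
\[
\frac1n\Phi_{i,n}(x)\ \le\ \frac qn\cdot\frac1q\sum_{j=0}^{q-1}D_k^{\,j}\Phi_{i,k}(x)\ +\ \frac1n\big(D_{qk}\Phi_{i,r}\big)(x).
\]
The operator $D_k$ is a positive contraction of $L^1(X,\mu)$ fixing the constants, and $\mu$ is ergodic by Theorem \ref{thm_harmonic_currents_vs_measures}(iii); hence the Akcoglu--Sucheston ratio ergodic theorem \cite{AkcogluSucheston} gives $\frac1q\sum_{j=0}^{q-1}D_k^{\,j}\Phi_{i,k}\to\mu(X)^{-1}\int_X\Phi_{i,k}\,d\mu$ for $\mu$-a.e. $x$, while $\frac1n D_{qk}\Phi_{i,r}(x)\to0$ for $\mu$-a.e. $x$ (a standard consequence of the same theorem, $r$ taking only finitely many values). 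Letting $n\to\infty$ along each residue class modulo $k$ and taking the union over the $k$ residues yields $\limsup_n\frac1n\Phi_{i,n}(x)\le\frac1{k\mu(X)}\int_X\Phi_{i,k}\,d\mu$ for $\mu$-a.e. $x$; intersecting over $k\in\N$ and using the previous paragraph gives $\limsup_n\frac1n\Phi_{i,n}(x)\le\ell_i/\mu(X)=\chi_i$ for $\mu$-a.e. $x$. The symmetric argument produces $\liminf_n\frac1n\Psi_{i,n}(x)\ge\chi_i$ for $\mu$-a.e. $x$. Taking $Y_0$ to be the intersection over $i=1,\dots,m$ of the resulting full-measure sets (minus the $\mu$-null set where some $\Phi_{i,n}$ or $\Psi_{i,n}$ is $\pm\infty$) and invoking the sandwich finishes the proof. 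The main obstacle, as noted, is exactly this last step: there is no single measure-preserving transformation on $X$, only the semigroup $(D_t)$, so Kingman's theorem is unavailable pointwise on $X$ and must be replaced by the subadditive relation over $(D_t)$ together with the Akcoglu--Sucheston ratio theorem and the ergodicity of $\mu$; a secondary point is checking that the operator norm and the smallest singular value of the restricted matrices $\mathcal A(\bullet,n)|_{H_i(x)}$ both grow at rate $\chi_i$, for which one uses that $\chi_i$ is the unique Lyapunov exponent of $\mathcal A$ on the invariant subspace $H_i$ (equivalently, Theorem \ref{T:VA} applied to a measurable trivialisation of $\{H_i(x)\}$ and to its exterior powers).
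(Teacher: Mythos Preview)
Your proof is correct but follows a genuinely different route from the paper's.

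\textbf{What the paper does.} After reducing (via a measurable orthonormal trivialisation of $H_{i_0}$) to a cocycle with a single Lyapunov exponent $\chi$, the paper works \emph{directly} with $\Mb_n$ and $\mb_n$. It proves the subadditive relation $\Mb_{n+k}\le \Mb_n+D_n\Mb_k$ for $\Mb_n$ itself (Lemma~\ref{L:varphi_n}(iii)), using the specialisation functions $f_{x,u}$ on $\D$ and the conversion rules (\ref{e:change_spec})--(\ref{e:exp_conversion_rule}). To identify the constant $\lim_n\frac1n\int_X\Mb_n\,d\mu=\chi$, the paper invokes a Ledrappier-type characterisation on the projectivised space $X\times\P$: it shows (Theorem~\ref{T:Ledrappier}) that \emph{every} $\mathcal A$-weakly harmonic probability $\nu$ on $X\times\P$ satisfies $\int\varphi\,d\nu=\chi$, and produces such a $\nu$ as a weak-$*$ limit of averaged Dirac masses at maximisers $u_n(x)$ of $\varphi_n(x,\cdot)$ (Lemma~\ref{L:existence_harmonic_measures}). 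Finally it applies the Akcoglu--Sucheston \emph{subadditive} ratio theorem (Theorem~\ref{T:Akcoglu--Sucheston}) directly to $(\Mb_n)$ and $(-\mb_n)$.

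\textbf{What you do differently.} You avoid proving subadditivity of $\Mb_{i,n}$ by sandwiching $\mb_{i,n}\le\Mb_{i,n}$ between $\Psi_{i,n}/n$ and $\Phi_{i,n}/n$, for which subadditivity is just submultiplicativity of operator norms on the invariant subbundle $H_i$. More significantly, you bypass the entire Ledrappier machinery on $X\times\P$: to identify $\ell_i=\inf_n\frac1n\int\Phi_{i,n}\,d\mu$ you apply Kingman on the path space $(\Omega,\mathbb W,\sigma_1)$ and read off the a.e.\ limit $\chi_i$ directly from the pointwise conclusion of Theorem~\ref{T:VA}, which is given as a hypothesis here. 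For the pointwise step on $X$ you use the \emph{additive} ergodic theorem for $D_k$ (Theorem~\ref{T:Ito} in the paper) and rebuild the subadditive conclusion by hand, whereas the paper quotes the subadditive version of Akcoglu--Sucheston directly.

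\textbf{Trade-offs.} Your argument is more elementary---no harmonic measures on $X\times\P$, no weak-$*$ compactness, no measurable selection of maximisers---at the price of importing the pathwise Oseledec statement of Theorem~\ref{T:VA} as a black box. The paper's route is more self-contained in that the identification of the constant does not appeal back to Theorem~\ref{T:VA}; it also yields the Ledrappier-type formula (Theorem~\ref{T:Ledrappier}) as a by-product, which is of independent interest. A minor point: your remainder claim $\frac1n D_{qk}\Phi_{i,r}(x)\to0$ does follow from the additive ergodic theorem (difference of consecutive Ces\`aro averages), but you might say so explicitly rather than call it ``standard.''
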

  The rest of the  section is  devoted  to the  proof of Theorem  \ref{T:expectation}.
  
  We make  the  following  reduction.
  Fix  an index $i_0:$  $1\leq i_0\leq m.$ Choose    measurable maps  $\psi_1,\ldots, \psi_{d_{i_0}}:\ X\to \K^d$ such that
  for all $x\in Y,$ $\{\psi_1(x),\ldots, \psi_{d_{i_0}}(x)\}$ is  an orthonormal  basis of $H_{i_0}(x).$
 Consider  the  cocycle $\mathcal B:\  \Omega\times\R^+\to \GL(d_{i_0},\K)$  defined by
$$
\mathcal  B(\omega,t)v:= \psi_{y}^{-1} \Big (\mathcal A(\omega,t)  (\psi_x(v))\Big), \qquad v\in\K^{d_{i_0}},
$$
 where $x:=\omega(0),$ $y:=\omega(t),$ and for each $x\in Y,$ $\psi_x:\ \K^{d_{i_0}}\to H_{i_0}(x)$ is
 the $\K$-linear isomorphism given by  
$$
\psi_x(v):= \sum_{j=1}^{d_{i_0}}\lambda_j\psi_j(x),\qquad \text{for}\ v:=(\lambda_1,\ldots,  \lambda_{d_{i_0}})\in \K^{d_{i_0}}.
$$
Since  $\psi_x$  preserves the Euclidean norms, we   infer that
the  specialization of $\mathcal A$ at $(x,u)$ for any  $x\in Y$ and  $u\in H_{i_0}(x)$
is  the same as the  specialization of $\mathcal B$ at $(x,\psi_x^{-1}(u)).$
Consequently, the proof of Theorem  \ref{T:expectation} for the cocycle  $\mathcal A$ and  $i=i_0$  reduces to 
the  proof for the cocycle $\mathcal B$ having the unique  Lyapunov exponent $\chi_{i_0}.$ Therefore, in the rest of this  section, we may  assume  without loss  of generality that

{\it  The cocycle $\mathcal A$ possesses a unique  Lyapunov exponent $\chi$ (that is, $m=1$). Moreover, we  will write
$\Mb_n$ (resp. $\mb_n$) instead of the unique maximal function $\Mb_{1,n}$ (resp. the unique  minimal function $\mb_{1,n}$).}


  \subsection{Ledrappier type  characterization of Lyapunov  spectrum}
  
  We recall from   \cite[Section 9.2]{NguyenVietAnh1}  some  results  about  dual spaces (see  also \cite{Walters}).
Let $(X,\Bc(X) ,\mu)$  be  a  probability Borel space, where $X$ is a Hausdorff topological space.
Let $E$ be  a  separable  Banach space with  dual space $E^\ast$ and  let $\langle\cdot,\cdot\rangle$ denote the pairing between $E$ and $E^\ast.$
Let $L^1_\mu(E)$  be  the space of all $\mu$-measurable  functions
$f:\ X\to E$     such that  $\|f\|:= \int_X  \|f(x)\| d\mu(x)<\infty.$
This  is  a  Banach space  with the norm $f\mapsto\|f\|,$  where two functions  $f$ and $g$
are identified  if  $f=g$  $\mu$-almost everywhere.
Let $L_\mu^\infty(E^\ast,E)$  be  the space of all maps $f:\  X\to E^\ast$
for which  the  function  $X\ni x\mapsto \langle f(x),v\rangle$ is  bounded  and  measurable  for each $v\in E,$
where  two such functions $f,$ $g$  are identified  if $X\ni x\mapsto \langle f(x),v\rangle$
and $X\ni x\mapsto \langle g(x),v\rangle$ are equal $\mu$-almost everywhere for every $v\in E.$
This  is  a Banach space   with the norm  
$$\|f \|_\infty :=\esup_{x\in X} \| f(x)\|=  \inf_{Y\in\Bc(X):\ \mu(Y)=1}\sup_{x\in Y}\| f(x)\|,  $$
which is finite by the principle of uniform boundedness.
Consider the  map $  \Lambda:\ L_\mu^\infty(E^\ast,E)\to (L^1_\mu(E))^*,$ given by
$$
(\Lambda\gamma)( f):=\int_X \langle\gamma(x),f(x)\rangle d\mu(x),
$$  
where the map $\gamma:\ X\to E^*$    is in  $L_\mu^\infty(E^\ast,E),$
and the map   $f:\ X\to E$   is in  $L^1_\mu(E).$
By \cite{Bourbaki},  $\Lambda$ is an isomorphism of Banach spaces.
In what follows,  for a  locally compact   metric space $\Sigma,$  we denote by $\Mc(\Sigma)$ the  space of all Radon  measures on $\Sigma$ with mass $\leq 1.$
 
We will be  interested  in the case where  $E:= \Cc(P,\R)$ for a       compact metric space $P.$ 
 So  $\Mc(P)$ is  the closed    unit ball of $E^\ast.$ 
The set
 $L^\infty_\mu(\Mc(P))$  of all   measurable  maps $\gamma:\ X\to \Mc(P)$ is contained in the unit ball of  $L_\mu^\infty(E^\ast,E),$
and is closed with respect to the  weak-star topology  $L_\mu^\infty(E^\ast,E).$ 
Hence, $L^\infty_\mu(\Mc(P))$
is  compact with respect to this  topology.
The    set   $L^\infty_\mu(\Mc(P))$ can be  identified with   a subset of the following space: 
$$
\Mc_\mu(X\times P):=\left\lbrace \lambda\in\Mc(X\times P):\ \lambda\ \text{projects to $\mu$ on $X$}   \right\rbrace.
$$
 via  the map  $L^\infty_\mu(\Mc(P))\ni \nu\mapsto \lambda\in \Mc(X\times P),$ where for $ f\in L^1_\mu(\Cc(P,\R)),$ we have
 \begin{equation}  \label{eq_formula_nu_x}
\int_{X\times P} \langle f(x),u\rangle d\lambda(x,u) =\int_X  \langle f(x),\nu(x)\rangle d\mu (x).
\end{equation}
Here,  $\langle f(x),u\rangle$ denotes the evaluation of the  function $f(x) \in \Cc(P,\R)$  at the point $u\in P,$
and $\langle f(x),\nu(x)\rangle$ denotes the pairing  between $E$ and $E^\ast$ evaluated at $f(x)\in E$ and $\nu(x)\in E^\ast.$

In the  remaining part of the section,  let $(X,\Lc)$ be a hyperbolic Riemann lamination    endowed with a harmonic probability measure $\mu$
which is  ergodic, let  $\mathcal A:\ \Omega(X,\Lc)\times \R^+\to \GL(d,\K)$ be a cocycle  admitting a unique  Lyapunov exponent 
$\chi$ with respect to $\mu.$ Assume  in addition that $\mathcal A$ is  moderate.
Set
   \begin{equation*}
 P=\P:=\P^{d-1}(\K)\qquad  \text{ and }\qquad    \Cc(\P):=\Cc(P,\R).
\end{equation*} 
Consider  the  {\it   cylinder lamination} of $\mathcal A,$  denoted  by $(X_{\mathcal A},\Lc_{\mathcal A}),$ which is   defined 
as follows.
 The  ambient topological space $X_{\mathcal A}$ of the  cylinder lamination is   $X\times \P$ which is independent of $\mathcal A.$
Its leaves are defined  as  follows.
For a  point $(x,u)\in X\times \P$ and  for every   simply connected   plaque  $K$  of $(X,\Lc)$ 
passing through $x,$ we define 
the plaque $\mathcal K$  of $(X\times \P,\Lc_{\mathcal A})  $  passing  through  $(x,U)$  by 
$$
\mathcal K=\mathcal K(K,x,u):=\left\lbrace (y,\mathcal A(\omega,1)u):\ y\in K,\ \omega\in\Omega_x,\ \omega(1)=y,\ \omega[0,1]\subset K   \right\rbrace,
$$  
where  $\mathcal A(\omega,1)u$ is  defined using  (\ref{e:projectivization}).  

Note that
 the projection on the  first factor $\pr_1:\ X\times \P\to X $   maps 
each leaf of  $(X_{\mathcal A},\Lc_{\mathcal A})  =(X\times \P,\Lc_{\mathcal A})  $ onto  each leaf of $(X,\Lc)$  locally homeomorphically.
Therefore,
we endow  each leaf of the cylinder lamination with  the (leafwise) Poincar\'e metric, still denoted by $g_P.$
  The  Laplacian
and  the  one  parameter  family  $\{D_t:\ t\geq 0\}$ of the  diffusion operators  are defined  using the newly-defined metric $g_P.$
Since the local expression of  $\mathcal A$  on flow  boxes  is,   in general,  only measurable,
the  cylinder lamination  $(X\times \P,\Lc_{\mathcal A})$ is  a  {\it  measurable lamination} 
in the  sense of \cite[Definition   2.2]{NguyenVietAnh1}.

For a  positive finite Borel measure $\nu$  on $X_{\mathcal A} =X\times \P$ and $t \in \R^+ ,$
$ D_t \nu$ is the positive finite Borel measure on $X_{\mathcal A}$ (unique in the sense of $\nu$-almost
everywhere) satisfies the following condition
$$
\int_{X_{\mathcal A}} D_t f (x,u)d\nu(x,u) =
\int_{X_{\mathcal A}}f (x,u)d(D_t \nu)(x,u)$$
for every bounded measurable function $f :\  X_{\mathcal A}\to  \R.$
 When $\nu$
is a probability measure, so is $D_t \nu$ (see, for example,
\cite[Remark B.14]{NguyenVietAnh1}).
  
 Recall from \cite[Lemma 9.6]{NguyenVietAnh1} the  following result     
\begin{lemma} \label{lem_D_t_invariant_L_mu} For every $t\geq 0,$ the operators $D_t:\ L^\infty_\mu(\Mc(\P))\to L^\infty_\mu(\Mc(\P))$ 
and  $D_t:\ L^1_\mu(\Cc(\P))\to L^1_\mu(\Cc(\P))$   are  contractions, that is,
 $\|D_t \nu\|_\infty \leq \| \nu\|_\infty$ for $\nu\in L^\infty_\mu(\Mc(\P))$ and 
 $\|D_t f\| \leq \| f\|$ for $f
\in L^1_\mu(\Cc(\P)).$ 
\end{lemma}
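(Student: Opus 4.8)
The plan is to deduce both contraction inequalities from three ingredients that are already in place: a probabilistic formula for $D_t$ on the cylinder lamination, positivity of the leafwise heat semigroup, and the $D_t$-invariance of the harmonic measure $\mu$ on $(X,\Lc)$. Everything is set up so that the two norm estimates become short once the probabilistic formula is available.

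First I would establish the representation
\begin{equation*}
(D_tF)(x,u)=\E_x\big[F(\omega(t),\,\mathcal A(\omega,t)u)\big],\qquad (x,u)\in X\times\P,
\end{equation*}
valid for every bounded Borel $F$ on $X\times\P$ and every $t\ge 0$. This is obtained leafwise: each cylinder leaf $L_{(x,u)}$ is a genuine hyperbolic Riemann surface, a universal covering map $\tilde\phi_{(x,u)}\colon\D\to L_{(x,u)}$ with $\tilde\phi_{(x,u)}(0)=(x,u)$ satisfies $\pr_1\circ\tilde\phi_{(x,u)}=\phi_x$, and by the definition of $\Lc_{\mathcal A}$ together with the homotopy law for $\mathcal A$ it sends $\zeta$ to $(\phi_x(\zeta),\mathcal A(\phi_x\circ\beta,1)u)$ for any $\beta\in\Omega_0$ with $\beta(1)=\zeta$; applying the commutation identity (\ref{e:commutation}) on $\D$ together with the bijection $\Omega_0\simeq\Omega_x$ and (\ref{e:W_x}) then yields the formula. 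Since $u\mapsto\mathcal A(\omega,t)u$ is continuous on $\P$, the same formula also shows $D_t$ is well defined on $L^1_\mu(\Cc(\P))$ (continuity in $u$, measurability in $x$). The care needed here — joint measurability in $(x,u)$ even though $\Lc_{\mathcal A}$ is only a \emph{measurable} lamination — is precisely where I would lean on the framework of \cite{NguyenVietAnh1}; this is the only genuinely delicate point in the whole proof.

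Next, for the contraction on $L^1_\mu(\Cc(\P))$: given $f\in L^1_\mu(\Cc(\P))$, set $\Phi(x):=\|f(x)\|_{\Cc(\P)}=\sup_{v\in\P}|f(x,v)|\in L^1(\mu)$. From the formula above, the triangle inequality and $|f|\le\Phi\circ\pr_1$ give $|(D_tf)(x,u)|\le\E_x[\Phi(\omega(t))]=(\widehat D_t\Phi)(x)$, where $\widehat D_t$ denotes the diffusion of $(X,\Lc)$ and the last equality is (\ref{e:expectation_vs_diffusion}). The right-hand side does not depend on $u$, so $\|(D_tf)(x)\|_{\Cc(\P)}\le(\widehat D_t\Phi)(x)$; integrating against $\mu$ and using $\widehat D_t$-invariance of the harmonic measure gives $\|D_tf\|\le\int\widehat D_t\Phi\,d\mu=\int\Phi\,d\mu=\|f\|$. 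The one extra word needed is that Definition \ref{D:harmonic_measure} gives $\widehat D_t$-invariance only against $\Cc(X,\Lc)$; I would extend it to nonnegative $\mu$-integrable Borel functions by a monotone-class argument followed by monotone convergence, using that $\widehat D_t$ is positive with $\widehat D_t\mathbf 1=\mathbf 1$.

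Finally, the contraction on $L^\infty_\mu(\Mc(\P))$ I would get by duality: under the isomorphism $\Lambda$ identifying $L^\infty_\mu(E^\ast,E)$ with $(L^1_\mu(E))^\ast$ for $E=\Cc(\P)$, the operator $D_t$ on $L^\infty_\mu(E^\ast,E)$ is the Banach-space adjoint of $D_t$ on $L^1_\mu(E)$, hence has the same norm, which is $\le 1$ by the previous paragraph; thus $\|D_t\nu\|_\infty\le\|\nu\|_\infty$. It remains to note that $D_t$ maps $L^\infty_\mu(\Mc(\P))$ into itself: positivity of $D_t$ on functions forces $\langle D_t\nu,f\rangle=\langle\nu,D_tf\rangle\ge 0$ for all nonnegative $f\in L^1_\mu(\Cc(\P))$, so $(D_t\nu)(x)$ is a positive Radon measure for $\mu$-a.e.\ $x$, while the mass bound $\|(D_t\nu)(x)\|\le 1$ for $\mu$-a.e.\ $x$ is exactly the inequality $\|D_t\nu\|_\infty\le 1$ just obtained; hence $(D_t\nu)(x)\in\Mc(\P)$ a.e. I expect all the substance to sit in the first step; the two contraction estimates themselves are then immediate.
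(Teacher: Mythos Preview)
The paper does not prove this lemma in-house; the sentence introducing it reads ``Recall from \cite[Lemma 9.6]{NguyenVietAnh1} the following result,'' so there is no argument here to compare yours against. Your reconstruction is sound and is the natural one: the probabilistic representation of $D_t$ on the cylinder lamination, the pointwise domination $|(D_tf)(x,u)|\le(\widehat D_t\|f(\cdot)\|_{\Cc(\P)})(x)$, and $\widehat D_t$-invariance of the harmonic measure $\mu$ give the $L^1_\mu(\Cc(\P))$ contraction; duality then handles $L^\infty_\mu(\Mc(\P))$. You are also right that the only genuinely delicate point is joint measurability on the measurable lamination $(X\times\P,\Lc_{\mathcal A})$, and deferring that to the framework of \cite{NguyenVietAnh1} is exactly what the present paper does.

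One small caveat worth making explicit: your duality step \emph{assumes} that $D_t$ on $L^\infty_\mu(\Mc(\P))$ is (or coincides with) the Banach adjoint of $D_t$ on $L^1_\mu(\Cc(\P))$. This is the natural choice, and it is consistent with how Definition~\ref{defi_weakly_harmonic_measures_which_is_ergodic} pairs $D_1f$ against $\nu$, but since the definition of $D_t$ on $L^\infty_\mu(\Mc(\P))$ lives in \cite{NguyenVietAnh1} rather than in this paper, you should either declare that you take the adjoint as the definition or verify the adjoint identity from whatever concrete definition is given there (e.g., via symmetry of the leafwise heat kernel together with $\widehat D_t$-invariance of $\mu$).
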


\begin{definition}\label{defi_weakly_harmonic_measures_which_is_ergodic}
\rm 
  An element  $\nu$  in
$L^\infty_\mu(\Mc(\P))$
 is  said to be a {\it  $\mathcal A$-weakly harmonic measure}  
   if 
$$\int_{X_{\mathcal A}}  D_1 f d\nu=\int_{X_{\mathcal A}} fd\nu
$$   
for all   functions $f\in L^1_\mu(\Cc(\P)).$  

Denote  by  $\Har_\mu (X_{\mathcal A})$ (or equivalently $\Har_\mu (X\times \P)$ when   $\mathcal A$ is clear  from  the context) the set of all $\mathcal A$-weakly harmonic elements.
This is  a convex subset of $L_\mu(\Mc(\P)).$
Using  Lemma  \ref{lem_D_t_invariant_L_mu}, this  set is  also closed.

An element $\nu\in \Har_\mu(X_{\mathcal A} )$ is  said to be {\it extremal} 
if it is  an extremal point
of    this convex closed cone, that is, if $\nu=t\nu_1+(1-t)\nu_2$ for some  $0< t < 1$ and  $\nu_1,\nu_2\in \Har_\mu(X\times \P),$
then $\nu_1$ and $\nu_2$ are constants  times of $\nu .$ Clearly, if  $ \Har_\mu(X\times \P)\not=\{0\},$ the  set of its nonzero extremal elements  
is  always  nonempty.

  Recall from  \cite[Proposition 9.5]{NguyenVietAnh1}  that every extremal element
 $\nu$ of $ \Har_\mu(X\times \P)$   is also an ergodic measure for the cylinder lamination.
In particular, if  $ \Har_\mu(X\times \P)\not=\{0\},$ there always exists a nonzero $\mathcal A$-weakly harmonic element  which is also ergodic.
 \end{definition}

 Consider  the   functions $\varphi$ and  $\varphi_n:\ X\times \P\to\R$    given by
 \begin{equation}\label{e:varphi_n}
\begin{split} 
\varphi(x,u)&:=  \int_{\Omega_x} \log{\|\mathcal A(\omega,1)u\|} dW_x(\omega),\\
\varphi_n&:={1\over n}\sum_{i=0}^{n-1}D_i\varphi. 
\end{split}
 \end{equation}
   \begin{lemma}\label{L:integrability}
 (i)  The  function
 $\varphi$  belongs to  $L^1_\mu(\Cc(\P)).$
 \\
(ii) For every $n\geq1,$
$$
 \varphi_n(x,u) 
= {1\over n}  \int_{\Omega_x} \log{\|\mathcal A(\omega,n)u\|} dW_x(\omega),\qquad
(x,u)\in X\times \P.$$
(iii) For every $n\geq1,$ $\varphi_n$  belongs to  $L^1_\mu(\Cc(\P)).$ Moreover, it is  a Borel measurable  function from $X\times\P\to\R.$
 \end{lemma}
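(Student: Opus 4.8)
The plan is to establish the three assertions in succession, drawing (i) from the uniform integrability estimate of Lemma~\ref{L:sup_integrability}, proving the equality in (ii) by transporting the diffusion $D_i\varphi$ on the cylinder lamination down to the Poincar\'e disc by means of the conversion and commutation rules already recorded, and deducing (iii) from (i) together with Lemma~\ref{lem_D_t_invariant_L_mu}.

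For (i), I would start from the elementary bound: for $\omega\in\Omega_x$ and $\tilde u\in\K^d\setminus\{0\}$ one has $\|\mathcal A^-(\omega,1)\|^{-1}\le \|\mathcal A(\omega,1)\tilde u\|/\|\tilde u\|\le \|\mathcal A^+(\omega,1)\|$, so that by the normalization (\ref{e:projectivization}),
$$
\sup_{u\in\P}\bigl|\log\|\mathcal A(\omega,1)u\|\bigr|\ \le\ g(\omega):=\bigl|\log\|\mathcal A^+(\omega,1)\|\bigr|+\bigl|\log\|\mathcal A^-(\omega,1)\|\bigr|.
$$
By Lemma~\ref{L:sup_integrability} with $t=1$, $\int_{\Omega_x}g\,dW_x\le 2c_1$ with $c_1$ independent of $x$; hence $\sup_{u\in\P}|\varphi(x,u)|\le 2c_1$ for every $x$, and so $\int_X\bigl(\sup_{u\in\P}|\varphi(x,u)|\bigr)\,d\mu\le 2c_1\mu(X)<\infty$. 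For fixed $\omega$, the map $u\mapsto\log\|\mathcal A(\omega,1)u\|$ is continuous on the compact metric space $\P$ (it descends from the continuous, degree-zero homogeneous function $\tilde u\mapsto\|\mathcal A(\omega,1)\tilde u\|/\|\tilde u\|$, which is bounded away from $0$ and $\infty$) and is dominated by the $W_x$-integrable $g$; dominated convergence then gives $\varphi(x,\cdot)\in\Cc(\P)$. Finally $x\mapsto\varphi(x,u)$ is Borel for each fixed $u$ by the measurable construction of $\mathcal A$ and of its specialization (cf.\ \cite[Section 9]{NguyenVietAnh1}); since $\P$ is separable, the evaluations at a countable dense subset of $\P$ form a countable separating family on the separable Banach space $\Cc(\P)$, whence $x\mapsto\varphi(x,\cdot)$ is a $\mu$-measurable $\Cc(\P)$-valued map. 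Thus $\varphi\in L^1_\mu(\Cc(\P))$.

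The core is (ii). In view of (\ref{e:varphi_n_diffusion}) it is equivalent to prove
$$
\sum_{i=0}^{n-1}(D_i\varphi)(x,u)=(D_n f_{x,u})(0)\qquad\text{for all }(x,u)\in X\times\P,\ n\ge1,
$$
where on the left $D_i$ is the diffusion of the cylinder lamination $(X_{\mathcal A},\Lc_{\mathcal A})$ and on the right that of $(\D,g_P)$. The key observation is that, for the covering map $\pi:\D\to L_{(x,u)}$ of the leaf through $(x,u)$ given by $\pi(\eta)=\bigl(\phi_x(\eta),[\mathcal A(\phi_x\circ\omega_\eta,1)u]\bigr)$ with $\omega_\eta\in\Omega_0$, $\omega_\eta(1)=\eta$, the second conversion rule (\ref{e:exp_conversion_rule}) with $t=1$ says exactly that $\varphi\circ\pi=D_1 f_{x,u}-f_{x,u}$ as functions on $\D$. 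Applying the disc diffusion $D_i$, evaluating at $0$, and using that (\ref{e:commutation}) holds for the covering $\pi$ — so $(D_i\varphi)(x,u)=D_i(\varphi\circ\pi)(0)$, since $\pi(0)=(x,u)$ — together with the semigroup law $D_i\circ D_1=D_{i+1}$ from (\ref{e:semi_group}), I obtain $(D_i\varphi)(x,u)=(D_{i+1}f_{x,u})(0)-(D_i f_{x,u})(0)$; the sum then telescopes to $(D_n f_{x,u})(0)-(D_0 f_{x,u})(0)=(D_n f_{x,u})(0)$ because $f_{x,u}(0)=0$. Dividing by $n$ and invoking (\ref{e:varphi_n_diffusion}) once more with $t=n$ yields the formula in (ii). Equivalently one can argue probabilistically: telescope $\log\|\mathcal A(\omega,n)u\|=\sum_{i=0}^{n-1}\log\|\mathcal A(\sigma_i\omega,1)\bigl(\mathcal A(\omega,i)u\bigr)\|$ by the multiplicative law, and apply the Markov property of leafwise Brownian motion, noting that $\mathcal A(\omega,i)$ is measurable with respect to $\omega|_{[0,i]}$, to identify the $i$-th expectation with $(D_i\varphi)(x,u)$.

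For (iii), membership $\varphi_n\in L^1_\mu(\Cc(\P))$ is immediate: by Lemma~\ref{lem_D_t_invariant_L_mu} each $D_i$ preserves $L^1_\mu(\Cc(\P))$, hence so does the finite average $\varphi_n=\frac1n\sum_{i=0}^{n-1}D_i\varphi$, which by (i) is a finite combination of elements of $L^1_\mu(\Cc(\P))$; alternatively one re-runs the estimate of (i) on the formula from (ii) to get $\sup_{u\in\P}|\varphi_n(x,u)|\le 2c_n/n$ uniformly in $x$, with continuity in $u$ again by dominated convergence. Joint Borel measurability of $\varphi_n$ on $X\times\P$ follows from that of $\varphi$ (again from the construction of $\mathcal A$, cf.\ \cite[Section 9]{NguyenVietAnh1}) and the fact that the diffusion along the measurable lamination $(X_{\mathcal A},\Lc_{\mathcal A})$ of a bounded Borel function is Borel, so each $D_i\varphi$, and hence $\varphi_n$, is Borel on $X\times\P$. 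The main obstacle is the displayed identity in (ii): one must be careful about the covering $\pi:\D\to L_{(x,u)}$, about the validity of (\ref{e:commutation}) and (\ref{e:exp_conversion_rule}) for it, and about the (uniform, via Lemma~\ref{L:sup_integrability}) integrability justifying the term-by-term manipulations; the measurability statements are routine but genuinely rest on the constructions of \cite{NguyenVietAnh1}.
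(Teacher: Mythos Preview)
Your proposal is correct and follows essentially the same approach as the paper. The paper's proof is terse: for (i) it uses the same domination by $|\log\|\mathcal A^\pm(\omega,1)\||$ and Lemma~\ref{L:sup_integrability} followed by dominated convergence; for (ii) it simply cites \cite[Lemma~9.9]{NguyenVietAnh1}, whose content is exactly the telescoping argument you wrote out via $\varphi\circ\pi=D_1f_{x,u}-f_{x,u}$ and the commutation/semigroup identities; and for (iii) it re-runs the estimate of (i) on the formula of (ii) (your alternative route via Lemma~\ref{lem_D_t_invariant_L_mu} is equally valid), deducing Borel measurability directly from the integral representation in (ii) rather than from preservation of Borel functions under $D_i$.
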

\begin{proof}
 Observe that for each $\omega\in\Omega_x,$
the map $\P\ni u\mapsto  \log{\|\mathcal A(\omega,1)u\|}$ is continuous.
Moreover,   
$$
|\log{\|\mathcal A^\pm(\omega,1)u\|}| \leq  |{\log\|\mathcal A^\pm(\omega,1)\|}|,\qquad  u\in\P.
 $$
 On the other hand, by  Lemma \ref{L:sup_integrability} we  get a constant $c<0$ such that
$$ \int_{\Omega_x} |{\log\|\mathcal A(\omega,1)\|}| dW_x(\omega)<c,\qquad  x\in X.$$
Putting these together and using that $\mu$ is  a finite measure, 
we may apply the Lebesgue's dominated convergence. Consequently,
 $
\varphi(x,\cdot)$ is  continuous  on $\P$  and $
\|\varphi(x,\cdot)\|_{\Cc(\P)}\leq c$ for  any point $x\in X.$  This proves assertion (i).

Using assertion (i),  we proceed as 
 in the proof of  \cite[Lemma  9.9]{NguyenVietAnh1}. Assertion  (ii) follows.
 
 Arguing as in the proof of assertion (i) and using  assertion (ii) yield  the first part of assertion (iii). 
 The  second one is  an immediate consequence of assertion (ii)  (see also   \cite[Theorem 2.6 (i) and Appendix A.7]{NguyenVietAnh1}).
\end{proof}

The following result  has  been proved in \cite[Lemma 9.10]{NguyenVietAnh1}.
 \begin{lemma}\label{L:existence_harmonic_measures}
 Let $(\nu_n)_{n=1}^\infty\subset  L^\infty_\mu(\Mc(\P))$ be such that for each $n\geq 1$ and  for $\mu$-almost every $x\in X,$  $\nu_n(x)$ is   a  Dirac mass at  some  point $u_n(x)\in \P.$  
\\
(i) Then there is a  subsequence $ ( \nu_{n_j} )_{j=1}^\infty$  such that ${1\over n_j} \sum_{k=0}^{n_j-1}D_k\nu_{n_j}$ converges  weakly  to  a  probability   measure 
 $\nu\in\Har_\mu(X\times \P) .$ In particular, there always exists a  probability ergodic $\mathcal A$-weakly harmonic  measure on $X\times \P.$ 
\\
(ii) Moreover, we have that 
  $$\lim_{j\to\infty}  \int_X \varphi_{n_j}(x,u_{n_j}(x))d\mu(x)= \int_{X\times \P} \varphi d\nu ,$$
  where $\varphi_{n_j}$ and $\varphi$ are given by (\ref{e:varphi_n}).
 \end{lemma}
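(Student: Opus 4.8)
The plan is to realize $\nu$ as a weak-star limit of Ces\`aro averages of the given sequence, in the spirit of the Krylov--Bogolyubov construction adapted to the dual-space framework of Subsection~\ref{ss:moderate_cocycle}, and then to read off (ii) from the same convergence.

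\emph{Construction of $\nu$.} For $n\ge 1$ put $\widetilde\nu_n:=\frac1n\sum_{k=0}^{n-1}D_k\nu_n$. By Lemma~\ref{lem_D_t_invariant_L_mu} each $D_k$ maps $L^\infty_\mu(\Mc(\P))$ into itself; moreover $D_k$ preserves positivity and $D_k\mathbf 1=\mathbf 1$, so $(D_k\nu_n)(x)$ is a probability measure on $\P$ for $\mu$-a.e.\ $x$, hence so is $\widetilde\nu_n(x)$, and $\widetilde\nu_n\in L^\infty_\mu(\Mc(\P))$ corresponds via \eqref{eq_formula_nu_x} to a probability measure on $X\times\P$ projecting to $\mu$. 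Since $\Cc(\P)$ is separable the Banach space $L^1_\mu(\Cc(\P))$ is separable, so the closed unit ball of its dual is weak-star metrizable; as $L^\infty_\mu(\Mc(\P))$ is weak-star closed in that ball (and compact, as noted in Subsection~\ref{ss:moderate_cocycle}) it is weak-star sequentially compact. Extract a subsequence $(n_j)$ with $\widetilde\nu_{n_j}\to\nu$ weak-star. Testing against functions of the form $(x,u)\mapsto g(x)$ with $g\in L^1(\mu)$, and against $\mathbf 1$, shows $\nu$ projects to $\mu$ and has total mass $1$; positivity also passes to the limit, so $\nu$ is a probability element of $L^\infty_\mu(\Mc(\P))$.

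\emph{$\nu$ is $\mathcal A$-weakly harmonic, and part (i).} Recall that $D_1$ on $L^\infty_\mu(\Mc(\P))$ is adjoint to $D_1$ on $L^1_\mu(\Cc(\P))$, so $\int_{X\times\P}(D_1f)\,d\lambda=\int_{X\times\P}f\,d(D_1\lambda)$ for $f\in L^1_\mu(\Cc(\P))$, $\lambda\in L^\infty_\mu(\Mc(\P))$, and $D_1D_k=D_{k+1}$, $D_0=\id$. Fix $f\in L^1_\mu(\Cc(\P))$. Then, using $\int(D_1f)\,d(D_k\nu_{n_j})=\int f\,d(D_{k+1}\nu_{n_j})$ and telescoping,
$$\int(D_1f)\,d\widetilde\nu_{n_j}-\int f\,d\widetilde\nu_{n_j}=\frac1{n_j}\Big(\int f\,d(D_{n_j}\nu_{n_j})-\int f\,d\nu_{n_j}\Big),$$
whose modulus is at most $\frac{2}{n_j}\int_X\|f(x)\|_{\Cc(\P)}\,d\mu(x)$ because every $D_k\nu_{n_j}$ is probability-valued; this tends to $0$. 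Passing to the weak-star limit gives $\int(D_1f)\,d\nu=\int f\,d\nu$ for all $f$, i.e.\ $\nu\in\Har_\mu(X\times\P)$. In particular $\Har_\mu(X\times\P)\ne\{0\}$; applying Krein--Milman to the nonempty, convex, weak-star compact set of probability elements of $\Har_\mu(X\times\P)$ yields an extremal such element, which is an ergodic $\mathcal A$-weakly harmonic probability measure by \cite[Proposition~9.5]{NguyenVietAnh1}. This proves (i).

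\emph{Part (ii).} Since $\nu_{n_j}(x)=\delta_{u_{n_j}(x)}$ for $\mu$-a.e.\ $x$, identity \eqref{eq_formula_nu_x} applied to $f=\varphi_{n_j}$ (which lies in $L^1_\mu(\Cc(\P))$ by Lemma~\ref{L:integrability}(iii)) gives $\int_X\varphi_{n_j}(x,u_{n_j}(x))\,d\mu(x)=\int_{X\times\P}\varphi_{n_j}\,d\nu_{n_j}$. By the definition \eqref{e:varphi_n} of $\varphi_{n_j}$, and since $\varphi\in L^1_\mu(\Cc(\P))$ by Lemma~\ref{L:integrability}(i) (hence so is each $D_i\varphi$), the adjoint relation applied term by term yields
$$\int_{X\times\P}\varphi_{n_j}\,d\nu_{n_j}=\frac1{n_j}\sum_{i=0}^{n_j-1}\int_{X\times\P}(D_i\varphi)\,d\nu_{n_j}=\frac1{n_j}\sum_{i=0}^{n_j-1}\int_{X\times\P}\varphi\,d(D_i\nu_{n_j})=\int_{X\times\P}\varphi\,d\widetilde\nu_{n_j}.$$
Since $\widetilde\nu_{n_j}\to\nu$ weak-star and $\varphi\in L^1_\mu(\Cc(\P))$, the right-hand side converges to $\int_{X\times\P}\varphi\,d\nu$, which is the assertion. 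The only genuinely delicate input is that $\varphi$ and the $\varphi_n$ are admissible test functions — that is, $\varphi(x,\cdot),\varphi_n(x,\cdot)\in\Cc(\P)$ with $\mu$-integrable $\Cc(\P)$-norms; this is exactly Lemma~\ref{L:integrability}, which itself rests on the uniform integrability estimate of Lemma~\ref{L:sup_integrability} for the moderate cocycle $\mathcal A$. The remaining ingredients (weak-star sequential compactness of $L^\infty_\mu(\Mc(\P))$, the adjointness and contraction properties of $D_t$, and the preservation of probability-valued maps under $D_t$) are supplied by the machinery of Subsection~\ref{ss:moderate_cocycle} and Lemma~\ref{lem_D_t_invariant_L_mu}.
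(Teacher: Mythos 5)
Your proof is correct and takes the route the paper intends: the paper gives no argument here, simply citing \cite[Lemma 9.10]{NguyenVietAnh1}, and your Ces\`aro-average (Krylov--Bogolyubov) construction --- weak-star compactness of $L^\infty_\mu(\Mc(\P))$, adjointness/contraction and mass-preservation of the operators $D_t$ via Lemma \ref{lem_D_t_invariant_L_mu} and the harmonicity of $\mu$, telescoping for weak harmonicity, and the admissibility of $\varphi,\varphi_n$ from Lemma \ref{L:integrability} for part (ii) --- is precisely the standard proof of that cited lemma. Both parts (i) and (ii) are handled correctly, so nothing further is needed.
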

 The  last result of the  subsection  provides an integral formula  for the Lyapunov exponent $\chi.$
 \begin{theorem}\label{T:Ledrappier}
 For every probability   measure 
 $\nu\in\Har_\mu(X\times \P) $  we have that  $\int_{X\times \P} \varphi d\nu=\chi .$
 \end{theorem}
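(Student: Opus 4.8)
The plan is to combine the two defining features of $\nu$: that it is stationary under the cylinder diffusion $D_1$, and that it projects onto $\mu$ on $X$. From the first I will deduce that $\int_{X\times\P}\varphi_n\,d\nu$ does not depend on $n$; from the second, together with Theorem \ref{T:VA}, that $\int_{X\times\P}\varphi_n\,d\nu\to\chi$ as $n\to\infty$. Combining the two gives $\int_{X\times\P}\varphi\,d\nu=\lim_n\int_{X\times\P}\varphi_n\,d\nu=\chi$, since $\varphi_1=\varphi$.

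For the first point, recall from Lemma \ref{L:integrability} and Lemma \ref{lem_D_t_invariant_L_mu} that $\varphi$ and each $D_i\varphi$ lie in $L^1_\mu(\Cc(\P))$; hence, applying the identity $\int D_1 f\,d\nu=\int f\,d\nu$ (which defines an $\mathcal A$-weakly harmonic measure) successively to $f=D_{i-1}\varphi,\dots,D_1\varphi,\varphi$ and using the semi-group law $D_i=D_1\circ D_{i-1}$, one gets $\int D_i\varphi\,d\nu=\int\varphi\,d\nu$ for every $i\ge 0$, and averaging over $0\le i\le n-1$ yields $\int\varphi_n\,d\nu=\int\varphi\,d\nu$. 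For the second point I use $\varphi_n(x,u)=\frac1n\E_x[\log\|\mathcal A(\bullet,n)u\|]$ (Lemma \ref{L:integrability}(ii)) and the elementary sandwich
$$-\tfrac1n\log\|\mathcal A(\omega,n)^{-1}\|\ \le\ \tfrac1n\log\|\mathcal A(\omega,n)u\|\ \le\ \tfrac1n\log\|\mathcal A(\omega,n)\|,$$
valid for every $u\in\P$. In the present (reduced) situation the cocycle has the single Lyapunov exponent $\chi$, so the upper bound converges to $\chi$ by (\ref{e:Lyapunov_max}), and the lower bound converges to $\chi$ as well, because in the single-exponent case Theorem \ref{T:VA} also forces $\frac1n\log\|\mathcal A(\omega,n)^{-1}\|\to-\chi$; both limits hold for $W_x$-almost every $\omega$, for $\mu$-almost every $x$. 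Equivalently, $\Psi_n(\omega):=\sup_{u\in\P}\big|\tfrac1n\log\|\mathcal A(\omega,n)u\|-\chi\big|\to 0$ off a null set of $\bar\mu:=\int_X W_x\,d\mu(x)$, and $\bar\mu$ is shift-invariant since $\mu$ is harmonic.

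It remains to promote this pathwise convergence to $\int_\Omega\Psi_n\,d\bar\mu\to0$. Here I would feed in the integrability estimate of Lemma \ref{L:sup_integrability}: submultiplicativity of the operator norm along the flow gives $\frac1n\log^+\|\mathcal A^{\pm1}(\omega,n)\|\le\frac1n\sum_{k=0}^{n-1}H(\sigma_k\omega)$, where $H:=\log^+\|\mathcal A(\cdot,1)\|+\log^+\|\mathcal A(\cdot,1)^{-1}\|$ belongs to $L^1(\bar\mu)$; Birkhoff averages of an $L^1$ function over a measure-preserving shift are uniformly $\bar\mu$-integrable, so the dominated family $\{\Psi_n\}_n$ is uniformly integrable, and uniform integrability together with $\bar\mu$-a.e.\ convergence force $\int_\Omega\Psi_n\,d\bar\mu\to0$. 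Finally, since $\nu$ is a probability measure projecting to $\mu$ and $|\varphi_n(x,u)-\chi|\le\E_x[\Psi_n]$ for all $u$,
$$\Big|\int_{X\times\P}\varphi_n\,d\nu-\chi\Big|\le\int_X\E_x[\Psi_n]\,d\mu(x)=\int_\Omega\Psi_n\,d\bar\mu\longrightarrow 0,$$
which, combined with the first step, gives $\int_{X\times\P}\varphi\,d\nu=\chi$. The main obstacle is precisely this uniform-integrability passage to the limit: converting the pathwise Oseledec asymptotics of $\mathcal A$ and $\mathcal A^{-1}$ into an honest $L^1$-statement on path space. This is where the Gaussian-type tail bound for the Poincar\'e Brownian motion underlying Lemma \ref{L:sup_integrability}, and the stationarity of $\bar\mu$ coming from harmonicity of $\mu$, are indispensable.
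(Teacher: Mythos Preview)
Your argument is correct and follows a genuinely different route from the paper. The paper treats the ergodic case by invoking a black-box result from the companion memoir \cite[Theorem 9.22, Part 1)(i)]{NguyenVietAnh1}, and then extends to an arbitrary $\nu\in\Har_\mu(X\times\P)$ by Choquet decomposition into extremal (hence ergodic) elements. You bypass both steps: the $D_1$-invariance of $\nu$ freezes $\int\varphi_n\,d\nu$ in $n$, and the pathwise Oseledec asymptotics together with uniform integrability of the Birkhoff averages $\frac1n\sum_{k<n}H\circ\sigma_k$ on $(\Omega,\bar\mu,\sigma_1)$ yield the $u$-uniform $L^1$-convergence $\sup_{u\in\P}|\varphi_n(\cdot,u)-\chi|\to0$, which settles the result for every $\nu$ projecting to $\mu$ at once, ergodic or not. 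Your approach is more self-contained (no Choquet, no external Ledrappier-type theorem) and produces as a by-product the stronger intermediate statement $\varphi_n\to\chi$ in $L^1_\mu(\Cc(\P))$; the paper's approach is shorter once the cited machinery is in hand and is modular in spirit.

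One small remark: the claim that in the single-exponent situation $\frac1n\log\|\mathcal A(\omega,n)^{-1}\|\to-\chi$ for $W_x$-almost every $\omega$ does not follow from the \emph{statement} of Theorem~\ref{T:VA} as written (which gives only pointwise-in-$v$ convergence and the maximal exponent). It is, however, part of the full Oseledec package invoked in its proof; alternatively, apply Theorem~\ref{T:VA} to the rank-one cocycle $\det\mathcal A$ to get $\frac1n\log|\det\mathcal A(\omega,n)|\to d\chi$, and combine with (\ref{e:Lyapunov_max}) and $\sigma_{\min}\ge|\det|/\sigma_{\max}^{d-1}$.
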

 \begin{proof} 
Consider  first  the  case  where $\nu$  is  ergodic. 
Since $\mathcal A$ admits a unique  Lyapunov exponent $\chi,$  the  theorem follows from 
   \cite [Theorem 9.22, Part 1) (i)] {NguyenVietAnh1}.

Now  consider  the  general case. 
The Choquet unique  representation  theorem (see, for example, \cite[Theorem 2.6.23]{CandelConlon2}) provides the integral  representation    of $\nu$ as  a convex  combinations of elements of  $K,$ where $K$ denotes all extremal  elements of the closed convex cone $\Har_\mu(X\times \P):$
$$
\nu=\int_K \tau d\rho(\tau),
$$
where  $\rho$ is a  probability measure on $K.$ Therefore, we get that
$$\int_{X\times \P} \varphi d\nu=  \int_{\tau\in K}\Big (\int_{X\times \P} \varphi d\tau\Big) d\rho(\tau)   .
$$ 
On the  other hand, since  $\tau\in K$ is  ergodic,  the  first case  implies that the inner integral on the  right hand  side is equal to $\chi.$
Hence, the right hand side is also   equal to  $\chi.$ This  completes the proof. 
\end{proof}

  \subsection{Proof of   the expectation  convergence} 

 \begin{lemma} \label{L:varphi_n}
(i) For each $n\geq 1,$
$$\Mb_n(x)=\sup_{u\in\P}\varphi_n(x,u)\quad\text{and}\quad
\mb_n(x):=\inf_{u\in\P}\varphi_n(x,u),\qquad x\in X.$$ 
 (ii) For each $n\geq 1,$
$\Mb_n$ and $\mb_n$ are Borel functions and $\mu$-integrable, and for each $x\in X,$   the set
$$
\{u\in\P:  \varphi_n(x,u)=\Mb_n(x)\}\quad\text{and}\quad\{u\in\P:  \varphi_n(x,u)=\mb_n(x)\}
$$
are  nonempty closed subsets.
\\
(iii) $(n+k)\Mb_{n+k}\leq n\Mb_n+kD_n\Mb_k$  and $(n+k)\mb_{n+k}\geq n\mb_n+kD_n\mb_k$  for $k,n\in\N.$
 \end{lemma}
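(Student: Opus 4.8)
The plan is to unwind the definitions of $\Mb_n$, $\mb_n$ and $\varphi_n$ and reduce everything to elementary properties of the diffusion operators $D_t$ together with the conversion rules for specializations from Subsection \ref{ss:moderate_cocycle}. Recall that by the reduction made earlier in the section we may assume $\mathcal A$ has a single Lyapunov exponent, $m=1$, $H_1(x)=\K^d$, and $\Mb_n=\Mb_{1,n}$, $\mb_n=\mb_{1,n}$.

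First I would prove assertion (i). By definition $\Mb_n(x)=\sup_{v\in\K^d\setminus\{0\}}\tfrac1n\E_x\!\left[\log\frac{\|\mathcal A(\bullet,n)v\|}{\|v\|}\right]$. Since the quantity $\frac{\|\mathcal A(\omega,n)v\|}{\|v\|}$ depends only on the projective class $u=[v]\in\P$, this supremum equals $\sup_{u\in\P}\tfrac1n\E_x[\log\|\mathcal A(\bullet,n)u\|]$ in the notation of (\ref{e:projectivization}). By Lemma \ref{L:integrability}(ii), $\E_x[\log\|\mathcal A(\bullet,n)u\|]=\int_{\Omega_x}\log\|\mathcal A(\omega,n)u\|\,dW_x(\omega)=n\varphi_n(x,u)$, which gives $\Mb_n(x)/n=\sup_{u\in\P}\varphi_n(x,u)$; the formula for $\mb_n$ is identical with $\sup$ replaced by $\inf$.

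For assertion (ii): Lemma \ref{L:integrability}(iii) says $\varphi_n$ is Borel on $X\times\P$ and, for each fixed $x$, continuous on the compact space $\P$. Hence $\sup_{u\in\P}\varphi_n(x,\cdot)$ and $\inf_{u\in\P}\varphi_n(x,\cdot)$ are attained on nonempty closed subsets of $\P$, and as functions of $x$ they are Borel measurable (a sup/inf of a Borel function over a compact parameter space, cf.\ the measurable selection discussion in \cite{NguyenVietAnh1}). Integrability of $\Mb_n,\mb_n$ against $\mu$ follows from the uniform bound $|\varphi_n(x,u)|\le c$ coming from Lemma \ref{L:sup_integrability} (applied to $t=1$ and iterated via the multiplicative law, or directly from $\varphi_n=\tfrac1n\sum_{i<n}D_i\varphi$ together with Lemma \ref{L:integrability}(i) and the contraction property of $D_i$), and the finiteness of $\mu$.

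The last assertion (iii) is the substantive point. The inequality $\Mb_{n+k}\le\Mb_n+D_n\Mb_k$ comes from the multiplicative law $\mathcal A(\omega,n+k)=\mathcal A(\sigma_n\omega,k)\,\mathcal A(\omega,n)$ and submultiplicativity of the operator norm applied to a vector. Concretely, for $u\in\P$ write $\log\|\mathcal A(\omega,n+k)u\|=\log\|\mathcal A(\omega,n)u\|+\log\|\mathcal A(\sigma_n\omega,k)u'\|$ where $u'=[\mathcal A(\omega,n)\tilde u]$; taking $W_x$-expectation and using the Markov property of leafwise Brownian motion (so that the conditional law of $\sigma_n\omega$ given $\omega(n)=y$ is $W_y$, as encoded in (\ref{e:expectation_vs_diffusion}) and the semigroup identity (\ref{e:semi_group})) yields
\begin{equation*}
n\varphi_n(x,u)+(n+k)\varphi_{n+k}(x,u)-n\varphi_n(x,u)=n\varphi_n(x,u)+\E_x\!\left[k\,\varphi_k\big(\omega(n),u'_\omega\big)\right].
\end{equation*}
Bounding $\varphi_k(\omega(n),u'_\omega)\le \Mb_k(\omega(n))/k$ pointwise and $\varphi_n(x,u)\le\Mb_n(x)/n$, then taking $\sup_{u\in\P}$ and using $\E_x[\Mb_k(\bullet(n))]=(D_n\Mb_k)(x)$ (from (\ref{e:expectation_vs_diffusion})), gives $\Mb_{n+k}(x)\le\Mb_n(x)+(D_n\Mb_k)(x)$. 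The argument for $\mb$ is the same with all inequalities reversed, replacing $\Mb$ by $\mb$ and $\sup$ by $\inf$. The main obstacle I anticipate is the careful bookkeeping of the projective "base point" $u'_\omega$ and making the conditioning-on-$\omega(n)$ step rigorous at the level of the Wiener measures $W_x$; once the Markov property is invoked in the form (\ref{e:expectation_vs_diffusion}) and the specialization conversion rule (\ref{e:change_spec}), everything else is routine.
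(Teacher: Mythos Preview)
Your proposal is correct and follows essentially the paper's route for parts (i) and (ii). For (iii) the underlying idea is identical---split $\log\|\mathcal A(\omega,n+k)u\|$ via the multiplicative law and bound the second piece by $\Mb_k$ at the intermediate point---but the bookkeeping differs: the paper transfers everything to the disc via the specialization $f_{u_0}=f_{x_0,[u_0]}$, writes $(D_{n+k}f_{u_0})(0)=(D_nf_{u_0})(0)+\big(D_n(D_kf_{u_0}-f_{u_0})\big)(0)$ using the semigroup identity, and then invokes the conversion rule (\ref{e:exp_conversion_rule}) to recognize $D_kf_{u_0}(\xi)-f_{u_0}(\xi)\le \Mb_k(\phi_{x_0}(\xi))$; you instead stay on $X$ and invoke the Markov property of $W_x$ to condition on $\omega|_{[0,n]}$. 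The paper's phrasing sidesteps the issue you flag (that $u'_\omega$ depends on the whole path segment, not just $\omega(n)$) because on the simply connected cover $\D$ the specialization $f_{u_0}$ absorbs this dependence; your phrasing handles it by bounding $\varphi_k(\omega(n),u'_\omega)$ uniformly in $u'_\omega$ before taking expectation, which is equally valid. Neither approach is more general here, and both are short.
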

\begin{proof}
Combining  the definition of $\Mb_n$ and $\mb_n$ in (\ref{e:M_n_and_m_n})  and  \eqref{e:expectation}, we get that
\begin{eqnarray*}
 \Mb_n(x)&=&\sup_{u\in\P} {1\over n}  \int_{\Omega_x} \log{\|\mathcal A(\omega,n)u\|} dW_x(\omega),
 \\
\mb_n(x)&=&\inf_{u\in\P} {1\over n}  \int_{\Omega_x} \log{\|\mathcal A(\omega,n)u\|} dW_x(\omega).
\end{eqnarray*}
 Consequently, assertion (i)  follows from Lemma \ref{L:integrability}
 (ii).

   By Lemma \ref{L:integrability} (iii),   $\varphi_n$  belongs to  $L^1_\mu(\Cc(\P))$  and  $\varphi_n$ is  a Borel function.
Fixing a  sequence $(u_j)_{j=1}^\infty\subset \P$ which is dense in $\P,$  and  using the continuity of $\P\ni v\mapsto \varphi_n(x,v)$ for each
$x\in X,$  we  see that
$ \Mb_n(x)= \sup_{j\geq 1}\varphi_n(x,u_j)$ for all $ x\in X.$
Hence, $\Mb_n$ is a Borel function. Similarly, we can show that $\mb_n$ is also a Borel function.
The $\mu$-integrability of $\Mb_n$ and $\mb_n$ follows  from  combining  assertion (i) and 
Lemma \ref{L:integrability} (iii).   We also infer from the  continuity of  $\P\ni v\mapsto \varphi_n(x,v)$ for each
$x\in X$ that  the  two sets  considered in (ii) are nonempty closed. This  implies assertion (ii).
 
 We only give  the proof of the  first   inequality in assertion (iii) since  the second one can be proved  similarly.
 Fix  $x_0\in X$ and $n,k\in \N.$  So we have  to show that  
\begin{equation}\label{e:sub_additivity}
(n+k)\Mb_{n+k}(x_0)\leq n\Mb_n(x_0)+k(D_n\Mb_k)(x_0).
\end{equation}
 Fix  a universal covering  $\phi:\ \D\to L_{x_0}$ with $\phi(0)=x_0$  as in (\ref{e:covering_map}). Let $f_u$  be  the  specialization  of $\mathcal A$ at $(x_0,u).$
 By assertion (ii) let $u_0\in\P$ be  such that $\Mb_{n+k}(x_0)=\varphi_n(x_0,u_0).$ 
By  Lemma   \ref{L:integrability} (ii) we  have  that
$$ p\varphi_p(x,u)=  \E_x[\log {\|\mathcal A(\bullet,p)u\|    }  ],\qquad p\in\N,\ u\in \P.$$  
 Putting this and
(\ref{e:varphi_n_diffusion}) and (\ref{e:semi_group}) together, we may write
\begin{equation}\label{e:estimate_1}
(n+k)\Mb_{n+k}(x_0)=(n+k)\varphi_n(x_0,u_0)= (D_{n+k} f_{u_0})(0)
=(D_nf_{u_0} )(0)+ \big(D_n(D_k f_{u_0}-f_{u_0})\big)(0).
\end{equation}
On the other hand, combining  (\ref{e:M_n_and_m_n})  and  (\ref{e:exp_conversion_rule}),  we infer that
\begin{equation*}
p\Mb_p(x)=\max_{u\in \P}(D_p f_{u})(\xi)- f_{u}(\xi),\qquad p\in\N,\ x\in L_{x_0},\ \xi\in\D\ \textrm{with $x=\phi(\xi).$}
\end{equation*}
Since  $f_{u_0}(0)=0$ by (\ref{e:varphi_n_diffusion}), it follows  that 
\begin{equation}\label{e:estimate_2}  
n\Mb_n(x_0)\geq (D_nf_{u_0})(0),
\end{equation}
 and that $k\Mb_k(\phi(\xi))\geq (D_kf_{u_0})(\xi) - f_{u_0}(\xi),$ $\xi\in\D.$
Applying   (\ref{e:commutation})  to  the  function  $\Mb_k$ and to $\phi:\ \D\to L_{x_0},$
and  using the latter  inequality and the fact that $D_n$ is a positive operator  yields  that  
$$k(D_n\Mb_k)(x_0)= (D_n(k\Mb_k))(\phi(0) )\geq \big(D_n(D_k f_{u_0}-f_{u_0})\big)(0).$$
Inserting  this and  (\ref{e:estimate_2}) into the last line  of (\ref{e:estimate_1}), we  obtain (\ref{e:sub_additivity}). This completes the proof of (iii).
\end{proof}

\begin{lemma}\label{L:Ledrappier} The unique  Lyapunov exponent  $\chi$  of $\mathcal A$  satisfies  
 $$\lim_{n\to\infty}\int_X \Mb_n(x)d\mu(x)=\chi= \lim_{n\to\infty}\int_X \mb_n(x)d\mu(x).$$
\end{lemma}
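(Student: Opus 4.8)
The plan is to express the averaged maximal and minimal functions through the auxiliary functions $\varphi_n$ of (\ref{e:varphi_n}), to choose near-optimal directions for each $n$, and then to combine the compactness statement of Lemma \ref{L:existence_harmonic_measures} with the Ledrappier-type identity $\int_{X\times\P}\varphi\,d\nu=\chi$ from Theorem \ref{T:Ledrappier}.

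First I would set $a_n:=\frac1n\int_X\Mb_n\,d\mu$ and $b_n:=\frac1n\int_X\mb_n\,d\mu$, which are finite by Lemma \ref{L:varphi_n}(ii), and reduce to proving $a_n\to\chi$ and $b_n\to\chi$. By the standard subsequence principle it suffices to show that every subsequence of $(a_n)$, resp. $(b_n)$, has a further subsequence converging to $\chi$. By Lemma \ref{L:varphi_n}(i) we have $\frac1n\Mb_n(x)=\sup_{u\in\P}\varphi_n(x,u)$ and $\frac1n\mb_n(x)=\inf_{u\in\P}\varphi_n(x,u)$. Fixing a countable dense set $\{u_j\}\subset\P$ and using that $\varphi_n$ is Borel on $X\times\P$ (Lemma \ref{L:integrability}(iii)) while $u\mapsto\varphi_n(x,u)$ is continuous, I would, for each $n$, let $u_n^+(x):=u_j$ with $j$ the least index such that $\varphi_n(x,u_j)>\frac1n\Mb_n(x)-\frac1n$, and define $u_n^-(x)$ symmetrically with the infimum; these are Borel maps $X\to\P$, hence $x\mapsto\nu_n^\pm(x):=\delta_{u_n^\pm(x)}$ lie in $L^\infty_\mu(\Mc(\P))$, and
$$
0\le a_n-\int_X\varphi_n\big(x,u_n^+(x)\big)\,d\mu\le\frac{\mu(X)}{n},\qquad 0\le\int_X\varphi_n\big(x,u_n^-(x)\big)\,d\mu-b_n\le\frac{\mu(X)}{n}.
$$

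Next, fixing an arbitrary subsequence and relabelling, I would apply Lemma \ref{L:existence_harmonic_measures} to the sequence of Dirac measures $(\nu_n^+)$: it produces a further subsequence $(n_j)$ and a probability measure $\nu\in\Har_\mu(X\times\P)$ with $\int_X\varphi_{n_j}(x,u_{n_j}^+(x))\,d\mu\to\int_{X\times\P}\varphi\,d\nu$, and Theorem \ref{T:Ledrappier} identifies this limit as $\chi$. Together with the displayed estimate this gives $a_{n_j}\to\chi$. Since this holds for an arbitrary subsequence, $a_n\to\chi$; running the same argument with $(\nu_n^-)$ gives $b_n\to\chi$, which is the assertion.

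The step needing the most care is arranging the direction maps $u_n^\pm$ to be genuinely Borel, so that the $\nu_n^\pm$ really belong to $L^\infty_\mu(\Mc(\P))$ and Lemma \ref{L:existence_harmonic_measures} is applicable; this is exactly why I use $\varepsilon$-optimal selections along a dense sequence rather than exact maximizers (whose measurability would require a measurable-selection theorem). Beyond that, the proof is a bookkeeping assembly of Lemmas \ref{L:varphi_n}, \ref{L:integrability}, \ref{L:existence_harmonic_measures} and Theorem \ref{T:Ledrappier}; note in particular that no subadditivity of $n\mapsto\int_X\Mb_n\,d\mu$ is invoked, since the subsequence principle already upgrades the subsequential limits coming from Lemma \ref{L:existence_harmonic_measures} into honest limits.
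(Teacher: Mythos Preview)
Your argument is essentially correct and takes a genuinely different route from the paper. The paper exploits the subadditivity recorded in Lemma \ref{L:varphi_n}(iii): integrating $\Mb_{n+k}\le\Mb_n+D_n\Mb_k$ against the $D_n$-invariant measure $\mu$ yields that $n\mapsto\int_X\Mb_n\,d\mu$ is subadditive, hence $(a_n)$ converges; a \emph{single} application of Lemma \ref{L:existence_harmonic_measures} to the full sequence then identifies the limit as $\chi$. For the direction maps the paper obtains \emph{exact} maximizers via the Castaing--Valadier measurable selection theorem. Your $\varepsilon$-optimal selections along a dense set are a perfectly good elementary substitute, and your avoidance of subadditivity via the subsequence principle is in spirit sound and has the virtue of treating $\Mb_n$ and $\mb_n$ completely symmetrically.

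There is, however, a real slip in the step where you ``fix an arbitrary subsequence, relabel, and apply Lemma \ref{L:existence_harmonic_measures}''. In that lemma the index $n$ simultaneously labels the Dirac measure $\nu_n$, the length of the Ces\`aro average $\frac{1}{n}\sum_{k=0}^{n-1}D_k$, and the function $\varphi_n=\frac{1}{n}\sum_{k=0}^{n-1}D_k\varphi$; these three uses of $n$ are tied together in both the statement and the proof. If you pass to a subsequence $(m_k)$ and relabel it as a new sequence indexed by $k$, the lemma returns $\int_X\varphi_{k_j}\big(x,u^+_{m_{k_j}}(x)\big)\,d\mu\to\chi$, not the desired $\int_X\varphi_{m_{k_j}}\big(x,u^+_{m_{k_j}}(x)\big)\,d\mu\to\chi$. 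The fix is simple: do not relabel. Since Lemma \ref{L:existence_harmonic_measures} is proved by weak-$\ast$ compactness of $L^\infty_\mu(\Mc(\P))$, every subsequence of the full sequence $\big(\frac{1}{n}\sum_{k=0}^{n-1}D_k\nu_n^+\big)_{n\ge1}$ (original indexing) has a further convergent subsequence whose limit lies in $\Har_\mu(X\times\P)$, and the argument for part (ii) of the lemma then gives the needed convergence along that further subsequence with the correct $\varphi_{n_j}$. Alternatively, invoking the subadditivity of $\int_X\Mb_n\,d\mu$ as the paper does lets you avoid the subsequence-of-subsequence manoeuvre entirely.
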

\begin{proof}
We only  prove  the first equality since the proof of the second one is  similar.
Set $a_n:=  n\int_X \Mb_n(x)d\mu(x)$ for $n\geq 1.$
By  Lemma \ref{L:varphi_n}
 (iii), we get that $(n+k)\Mb_{n+k}\leq n\Mb_n+kD_n\Mb_k.$ Integrating both  sides of this  inequality  and using that
 $\mu$ is  $D_n$-invariant (see  Definition  \ref{D:harmonic_measure}), we obtain that  $a_{n+k}\leq  a_n+a_k.$
 So $\lim_{n\to\infty} {a_n\over n}$ exists and is equal to $a:= \inf_{n\geq 1} {a_n\over n}.$ 
Set $$
\Pi_n(x):=\left\lbrace (x,u)\in X\times \P:\ \varphi_n(x,u)=\Mb_n(x)\right\rbrace,\qquad x\in X.
$$
Since  we know from  Lemma \ref{L:varphi_n} (ii) that $\Mb_n$ is measurable and that $\Pi_n(x)$ is a closed set for each $x\in X,$    we can choose by  \cite[Theorem  III.6]{CastaingValadier}  
a measurable  map  $u_n:\ X\to \P$ such that $(x,u_n(x))\in \Pi_n$ for $\mu$-almost every $x\in X.$ 
For $n\geq 1$  let  $\nu_n\in L^\infty_\mu(\Mc(\P))$ be defined  as follows: for each $x\in X,$ $\nu_n(x)$ is  the Dirac mass at $u_n(x).$   
Next,
  applying Lemma  \ref{L:existence_harmonic_measures} to the sequence $(\nu_n)_{n=1}^\infty$  yields 
    a  subsequence $ ( \nu_{n_j} )_{j=1}^\infty$  such that ${1\over n_j} \sum_{k=0}^{n_j-1}D_k\nu_{n_j}$ converges  weakly  to  a  probability   measure 
 $\nu\in\Har_\mu(X\times \P) .$ 
 Moreover, by  Lemma  \ref{L:existence_harmonic_measures} (ii)  we have that 
  $$\lim_{j\to\infty}  \int_X \varphi_{n_j}(x,u_{n_j}(x))d\mu(x)= \int_{X\times \P} \varphi d\nu .$$
  By  Theorem \ref{T:Ledrappier}, the  right hand side is  equal to $\chi.$ 
  A combination of   Lemma \ref{L:integrability} (ii) and  (\ref{e:M_n_and_m_n}) shows that 
 the  left  hand  side is  equal to  $ \lim_{j\to\infty} a_{n_j}$  which is  $a.$
   Hence, we have shown that $a=\chi,$ which  amounts to  $\lim_{n\to\infty}\int_X \Mb_n(x)d\mu(x)=\chi,$
as  desired. 
\end{proof}
  
Now we  discuss some properties of the operator $D=D_1$ given in (\ref{e:diffusions}).
 Since   $p(x,y,t)\geq 0$ (see \cite{Chavel})  and $\mu$ is $D$-invariant (see Definition  \ref{D:harmonic_measure}), we infer that  $D$ is  a  positive linear  operator acting on  the space $L^1(X,\mu)$ and  
 that  $$\|Df\|_{L^1(X,\mu)}\leq \|f\|_{L^1(X,\mu)}.$$ 
In other words, $D$ is {\it  Markovian}  in the sense of Akcoglu-Sucheston \cite{AkcogluSucheston}. Moreover,
  $$\|Df\|_{L^\infty(X,\mu)}\leq \|f\|_{L^\infty(X,\mu)}.$$
On the other hand, since $T$ is  extremal,  Theorem \ref{thm_harmonic_currents_vs_measures} (iii) says that $\mu$ is  ergodic. Consequently,
applying   \cite[Theorem 2.5.5]{CandelConlon2} (see also \cite{DunfordSchwartz}) 
yields the following
\begin{theorem}\label{T:Ito}
For every $f\in L^1(X,\mu),$ ${1\over n}\sum_{i=0}^{n-1} D_i f$ tends to $\int_X fd\mu$ as $n\to\infty$   $\mu$-almost  everywhere.
\end{theorem}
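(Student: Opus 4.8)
The plan is to view $D=D_1$ as a Dunford--Schwartz operator on $L^1(X,\mu)$ and to apply the pointwise ergodic theorem for such operators, then to use the ergodicity of $\mu$ to identify the limit as the constant $\int_X f\,d\mu$.

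First I would collect the two ingredients already at hand from the discussion preceding the statement. The positivity of the heat kernel together with the $D$-invariance of $\mu$ (Definition~\ref{D:harmonic_measure}) show that $D$ is a positive linear operator on $L^1(X,\mu)$ satisfying both $\|Df\|_{L^1(X,\mu)}\le\|f\|_{L^1(X,\mu)}$ and $\|Df\|_{L^\infty(X,\mu)}\le\|f\|_{L^\infty(X,\mu)}$; equivalently, $D$ is Markovian in the sense of Akcoglu--Sucheston \cite{AkcogluSucheston}. Moreover, since $T$ is extremal, Theorem~\ref{thm_harmonic_currents_vs_measures}(iii) shows that $\mu$ is ergodic, i.e. every leafwise saturated Borel set has $\mu$-measure $0$ or $\mu(X)$.

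Next, applying the Hopf--Dunford--Schwartz pointwise ergodic theorem in the form of \cite[Theorem~2.5.5]{CandelConlon2} (see also \cite{DunfordSchwartz}), for every $f\in L^1(X,\mu)$ the averages ${1\over n}\sum_{i=0}^{n-1}D_if$ converge $\mu$-almost everywhere to some $\bar f\in L^1(X,\mu)$ with $D\bar f=\bar f$. To finish, one identifies $\bar f$. A $D$-invariant set $A$ (that is, $D\mathbf 1_A=\mathbf 1_A$ $\mu$-a.e.) is, modulo a $\mu$-null set, leafwise saturated: for $\mu$-a.e. $x\in A$ one has $\int_{L_x}p(x,y,1)\bigl(1-\mathbf 1_A(y)\bigr)g_P(y)=0$ because $D\mathbf 1=\mathbf 1$, and the strict positivity of the heat kernel then forces $\mathbf 1_A\equiv 1$ on $L_x$ off a leafwise null set, and symmetrically on the complement. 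Hence the $\sigma$-algebra of $D$-invariant sets coincides mod $\mu$ with that of leafwise saturated sets, which by ergodicity is trivial; thus $\bar f$ equals a constant $c$ $\mu$-a.e. Integrating the Cesàro averages, using the $D$-invariance of $\mu$ and a standard truncation argument to justify passage to the limit (the averages being uniformly integrable), gives $c\,\mu(X)=\int_X\bar f\,d\mu=\int_X f\,d\mu$; since $\mu$ is a probability measure, $c=\int_X f\,d\mu$, as claimed.

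Given the preparation, the statement is fairly soft and I do not expect a serious obstacle; the only point requiring some care is the identification of the limit, namely the passage from ``$D$-invariant'' to ``constant'', which relies on the strict positivity of the leafwise heat kernel to show that $D$-invariant sets are leafwise saturated mod $\mu$, together with a truncation argument to match the constant with $\int_X f\,d\mu$.
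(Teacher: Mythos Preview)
Your proposal is correct and follows essentially the same approach as the paper: the paper's entire argument is the paragraph immediately preceding the statement, which records that $D=D_1$ is a positive $L^1$- and $L^\infty$-contraction (Markovian in the sense of \cite{AkcogluSucheston}), that $\mu$ is ergodic by Theorem~\ref{thm_harmonic_currents_vs_measures}(iii), and then simply invokes \cite[Theorem~2.5.5]{CandelConlon2} (see also \cite{DunfordSchwartz}). You cite the same reference and merely supply more detail on the identification of the limit, which the paper leaves to the cited theorem.
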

A sequence  $(f_n)_{n=0}^\infty \subset L^1(X,\mu)$ is  said to be   {\it subadditive}  if $f_{n+k}\leq  f_n+D_n f_k$
for all $n,k\in\N.$
Using   Theorem \ref{T:Ito}  and  the fact that $D$ is  Markovian, we  may restate  Akcoglu--Sucheston ratio ergodic theorem for  subadditive  sequences 
as  follows.
\begin{theorem}\label{T:Akcoglu--Sucheston} {\rm (Akcoglu--Sucheston \cite{AkcogluSucheston})}
Let  $(f_n)_{n=0}^\infty \subset L^1(X,\mu)$  be  a subadditive sequence such that $\gamma:=\inf_{n\geq 1}\int_X {f_n\over n}d\mu >-\infty.$
Then $\lim_{n\to\infty} {f_n\over n} =\gamma$ $\mu$-almost  everywhere.
\end{theorem}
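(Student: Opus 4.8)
I would obtain this as essentially a translation of the Akcoglu--Sucheston ratio ergodic theorem for superadditive processes \cite{AkcogluSucheston} into the present framework. The point is that the operator $D=D_1$ has just been shown to be Markovian: it is positive on $L^1(X,\mu)$ with $\|Df\|_{L^1}\le\|f\|_{L^1}$ and $\|Df\|_{L^\infty}\le\|f\|_{L^\infty}$, it satisfies $D\mathbf 1=\mathbf 1$, and $\mu$ is $D$-invariant; moreover $D_i=D^i$ by the semigroup property (\ref{e:semi_group}), and $\mu$ is ergodic by Theorem \ref{thm_harmonic_currents_vs_measures}(iii) since $T$ is extremal. For such an operator $\sum_{i=0}^{n-1}D^i\mathbf 1=n$, so the ``ratio'' appearing in the Akcoglu--Sucheston theorem is exactly $f_n/n$. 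Applying that theorem to the process $(-f_n)$, which is superadditive for $D$ precisely because $(f_n)$ is subadditive in the sense $f_{n+k}\le f_n+D^nf_k$, one gets that $f_n/n$ converges $\mu$-a.e.\ to a $D$-invariant function; by ergodicity of $\mu$ this limit is a $\mu$-a.e.\ constant $c$.

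It then remains only to check $c=\gamma$, which I would do by integrating the a.e.\ convergence. For the inequality $c\le\gamma$: iterating $f_{n+k}\le f_n+D^nf_k$ gives, for $n=qk$, the bound $f_n\le\sum_{j=0}^{q-1}D^{jk}f_k$ (with a harmless remainder term $D^{qk}f_r$ of $L^1$-norm $\le\|f_r\|$ when $k\nmid n$); dividing by $n$ and using Theorem \ref{T:Ito} applied to $D$ and to its powers yields $c\le\frac1k\int_X f_k\,d\mu$ for every $k\ge1$, hence $c\le\gamma$. For the reverse inequality: the numbers $a_n:=\int_X f_n\,d\mu$ form a subadditive numerical sequence, because $\int_X D^nf_k\,d\mu=\int_X f_k\,d\mu$ by $D$-invariance of $\mu$, so $a_n/n\to\inf_{n\ge1}a_n/n=\gamma$, which is finite by hypothesis; since the nonnegative functions $\frac1n\sum_{i=0}^{n-1}D^if_1-\frac1nf_n$ converge $\mu$-a.e.\ to $\int_X f_1\,d\mu-c$ and their integrals converge to $\int_X f_1\,d\mu-\gamma$, Fatou's lemma forces $\int_X f_1\,d\mu-c\le\int_X f_1\,d\mu-\gamma$, i.e.\ $\gamma\le c$. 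Combining, $c=\gamma$.

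The genuine input — and the only step I would not reprove — is the Akcoglu--Sucheston theorem itself, that is, the $\mu$-a.e.\ \emph{existence} of $\lim_n f_n/n$. Its proof rests on the Hopf--Chacon--Ornstein structure theory for positive $L^1$-contractions together with a filling-scheme maximal inequality used to reduce the superadditive process to an additive one; this is precisely the substitute, in the operator setting, for Kingman's subadditive ergodic theorem, which is unavailable here because $D$ is a genuine heat-diffusion operator and not the Koopman operator of a measure-preserving point transformation. Everything else above is soft, using only the Markovian property of $D$, ergodicity of $\mu$, Theorem \ref{T:Ito}, and Fatou's lemma.
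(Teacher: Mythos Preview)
The paper does not give a proof of this theorem: it is stated as a citation of Akcoglu--Sucheston, preceded only by the remark that $D$ is Markovian and $\mu$ is ergodic, so that their ratio ergodic theorem specializes to the form stated. Your proposal is entirely consistent with this---you invoke the same cited result for the a.e.\ existence of the limit and then add a correct soft argument (subadditivity of $a_n=\int f_n\,d\mu$, Theorem~\ref{T:Ito}, and Fatou) to identify the limiting constant as $\gamma$; the only minor point to note is that your use of Theorem~\ref{T:Ito} for $D^k$ requires observing that $D_k$-invariant functions coincide with $D$-invariant (leafwise constant) ones, so ergodicity passes to $D^k$.
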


Now  we  arrive at the 
\\
{\bf  End of the proof of Theorem   \ref{T:expectation}.}
By Lemma \ref{L:varphi_n}
 (ii)-(iii)
$(n\Mb_n)_{n=1}^\infty$ and $(-n\mb_n)_{n=1}^\infty$ are subadditive sequences.
By Lemma \ref{L:Ledrappier}  we have that
 $$\lim_{n\to\infty}\int_X \Mb_n(x)d\mu(x)=\chi= \lim_{n\to\infty}\int_X \mb_n(x)d\mu(x).$$
 Consequently, applying  Theorem \ref{T:Akcoglu--Sucheston} to $(n\Mb_n)_{n=1}^\infty$ and $(-n\mb_n)_{n=1}^\infty$
 yields the existence  of a Borel set $Y_0\subset Y$ of full $\mu$-measure such that
 $$
 \lim_{n\to\infty}  \Mb_n(x) =\chi= \lim_{n\to\infty}  \mb_n(x)
 $$
for  every $x\in Y_0.$    
     This  completes the proof in the case of a single Lyapunov
exponent, and hence the general case follows from the reduction made in Subsection   \ref{SS:reduction}.
\hfill $\square$


\section{Proof of the second  part of the Main Theorem}
\label{S:Proofs}


We  begin  this section with  some preparatory results on the  heat diffusions on the Poincar\'e  disc $(\D,g_P).$
In what  follows, for $a\in\D$ and $R>0,$  we  denote by      $\D(a,R)$  the Poincar\'e ball $\{\xi\in \D:\ \dist_P(a,\xi)<R\}.$  
For every $R\in \R,$ let $[R]$ be  the integer part of $R,$ i.e., $[R]=n$ if and only if $n\in\Z$ and  $n\leq  R<n+1.$ 

\begin{lemma}\label{L:D_t_Delta}
Let $f\in\Cc^2(\D)$  be such that  $f,$ $|df|_P$ and $\Delta f$  are moderate functions on $\D.$  Then 
$$
(D_tf)(\xi) - f(\xi)=\int_0^t (D_s \Delta  f) (\xi) ds,\qquad t\in\R^+,\ \xi\in\D.
$$ 
\end{lemma}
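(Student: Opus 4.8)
The plan is to obtain the identity by integrating its infinitesimal form --- Duhamel's relation $\frac{d}{dt}(D_tf)(\xi)=(D_t\Delta f)(\xi)$ for $t>0$ --- and then letting the lower endpoint tend to $0$. In substance this is nothing but the statement that $\Delta$ generates the heat semigroup $(D_t)$; all the work lies in licensing the usual manipulations on the complete noncompact manifold $(\D,g_P)=\H^2$, and it is there that the three moderateness hypotheses on $f$, $|df|_P$ and $\Delta f$ are played off against the Gaussian decay of the Poincar\'e heat kernel $\bfp$.

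First I would record the estimates on $\bfp$. Being the heat kernel of $\H^2$, $\bfp$ satisfies classical Gaussian-type bounds (see e.g.\ \cite{Chavel}): for each $T_0>0$ there is $C=C(T_0)>0$ with
\[
\bfp(\xi,\eta,t)+t\,|\partial_t\bfp(\xi,\eta,t)|+t^{1/2}|d_\eta\bfp(\xi,\eta,t)|_P+t\,|\Delta_\eta\bfp(\xi,\eta,t)|\ \le\ \frac{C}{t}\,(1+\rho)^{2}\,e^{-\rho^2/(4t)}\,,\qquad 0<t\le T_0,
\]
where $\rho:=\dist_P(\xi,\eta)$; moreover the $g_P$-volume of $\D(\xi,R)$ and the $g_P$-length of $\partial\D(\xi,R)$ are $O(e^{R})$. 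The hypothesis that $f$, $|df|_P$ and $\Delta f$ are moderate functions means, after fixing the base point $\xi$, that each of them is $O(e^{c\rho})$ on $\D$ for some $c>0$ (a moderate function has at most exponential growth in $\dist_P$). Comparing $e^{c\rho}$ with $e^{-\rho^2/(4t)}$ makes every integral below absolutely convergent, uniformly for $t$ in compact subsets of $(0,\infty)$; in particular one may differentiate $(D_tf)(\xi)=\int_\D \bfp(\xi,\eta,t)f(\eta)\,g_P(\eta)$ under the integral sign and use $\partial_t\bfp=\Delta_\eta\bfp$ to get $\frac{d}{dt}(D_tf)(\xi)=\int_\D(\Delta_\eta\bfp)(\xi,\eta,t)\,f(\eta)\,g_P(\eta)$.

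The heart of the matter is Green's second identity on the Poincar\'e ball $\D(\xi,R)$,
\[
\int_{\D(\xi,R)}\big((\Delta_\eta\bfp)\,f-\bfp\,\Delta f\big)\,g_P\;=\;\int_{\partial\D(\xi,R)}\big(f\,\partial_\nu\bfp-\bfp\,\partial_\nu f\big),
\]
followed by $R\to\infty$. By the bounds above and dominated convergence the left side tends to $\frac{d}{dt}(D_tf)(\xi)-(D_t\Delta f)(\xi)$. The right side tends to $0$: its integrand is $O\big((1+R)^2e^{-R^2/(4t)}\big)\cdot O(e^{cR})$ on $\partial\D(\xi,R)$ (the second factor coming from $|f|$ and from $|\partial_\nu f|\le|df|_P$, both moderate), while $\partial\D(\xi,R)$ has length $O(e^R)$, so the whole boundary integral is $O\big((1+R)^2e^{(1+c)R}e^{-R^2/(4t)}\big)\to 0$. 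This gives $\frac{d}{dt}(D_tf)(\xi)=(D_t\Delta f)(\xi)$ for all $t>0$ and $\xi\in\D$. To conclude I would integrate from $\varepsilon$ to $t$ and send $\varepsilon\to 0^+$: the same kernel bounds show that $s\mapsto(D_s\Delta f)(\xi)$ is continuous on $[0,\infty)$ and $s\mapsto(D_sf)(\xi)$ is continuous at $0$ with value $f(\xi)$ (the delta-approximation property of $\bfp$ together with the tail estimate above), so letting $\varepsilon\to 0$ in $(D_tf)(\xi)-(D_\varepsilon f)(\xi)=\int_\varepsilon^t(D_s\Delta f)(\xi)\,ds$ yields the lemma.

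I expect the vanishing of the boundary term to be the only genuinely delicate point: it is precisely where moderateness of $|df|_P$ is used, and it works because the exponential volume growth of hyperbolic spheres is beaten by the Gaussian-in-$\rho$ decay of $\bfp$ --- a robustness with no Euclidean counterpart. As an alternative to the PDE argument one can run the probabilistic proof: write $(D_sg)(\xi)=\E_\xi[g(\omega(s))]$ via (\ref{e:expectation_vs_diffusion}), apply Dynkin's formula to $f$ on the bounded domain $\D(\xi,R)$ stopped at the exit time $\tau_R$, and let $R\to\infty$, the passage to the limit being controlled by the sub-Gaussian estimate $W_0\{\sup_{0\le u\le s}\dist_P(\omega(u),0)>r\}<c''e^{-r^2/64}$ already invoked in the proof of Lemma \ref{L:sup_integrability}; Fubini then converts $\E_\xi\big[\int_0^t\Delta f(\omega(s))\,ds\big]$ into $\int_0^t(D_s\Delta f)(\xi)\,ds$.
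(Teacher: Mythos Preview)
Your proof is correct and takes a genuinely different route from the paper's. The paper starts from Dynkin's formula for $f\in\Cc^2_0(\D)$, then builds a locally finite cover of $\D$ by Poincar\'e balls of fixed radius together with a partition of unity $(\psi_k)$ having uniform bounds on $|d\psi_k|_P$ and $|\Delta\psi_k|$, sets $f_k:=\sum_{j\le k}\psi_j f\in\Cc^2_0(\D)$, applies Dynkin's formula to each $f_k$, and passes to the limit $k\to\infty$; the moderateness hypotheses are used to dominate $|f_k|$, $|\Delta f_k|$ uniformly and justify the limit (with a deferral to Candel \cite[Proposition 8.11]{Candel2} for the last step). You instead work directly with the heat kernel of $\H^2$: Gaussian bounds on $\bfp$ and its derivatives license differentiation under the integral, Green's identity on $\D(\xi,R)$ trades $\Delta_\eta\bfp$ for $\Delta f$, and the Gaussian tail $e^{-\rho^2/(4t)}$ kills the boundary integral against the merely exponential growth of $f$, $|df|_P$ and the sphere volume. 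In effect, the paper truncates the \emph{function}, you truncate the \emph{domain}. Your argument is more self-contained and makes transparent exactly where each of the three moderateness hypotheses enters (notably $|df|_P$ in the boundary term, which in the paper's approach is hidden inside the estimate $|\Delta f_k|\le c_2|f|+|\Delta f|+c_2|df|_P$); the paper's argument is softer in that it never needs pointwise kernel bounds beyond what is implicit in Dynkin's formula. Your alternative probabilistic sketch (Dynkin plus stopping at $\tau_R$) is in fact closer in spirit to what the paper does, the difference being only whether one localises by cutting off $f$ or by stopping the Brownian path.
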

\begin{proof} We follow  along the same lines  as  the proofs of  
   Candel in \cite[Proposition 8.11]{Candel2}. Indeed, 
recall from   \cite{Candel2}  the  following  Dynkin's formula (see \cite{Dynkin} or \cite[Theorem C.8.1]{CandelConlon2} for a proof): for every   function $f$ in the space $\Cc^2_0(\D)$ of $\Cc^2$-differentiable  functions on $\D$ with compact  support, it holds that
$$
\E_\xi[f\circ \pi_t]  -f(\xi)=\E_\xi\left\lbrack \int_0^t  (\Delta  f)\circ \pi_s ds\right\rbrack,\qquad t\in\R^+,\ \xi\in\D,
$$
where the projection  $\pi_t:\ \Omega\to X$ is given by  $\pi_t(\omega):=\omega(t),$ $\omega\in\Omega,$ $t\in\R^+.$ 
Using identity (\ref{e:expectation_vs_diffusion}), the above formula  can be rewritten, less stochastically and more analytically,  as follows:  
\begin{equation}\label{e:Dynkin}
(D_tf)(\xi) - f(\xi)=\int_0^t   (D_s \Delta f) (\xi) ds,\qquad t\in\R^+,\ \xi\in\D, 
\end{equation}
where  $f \in \Cc^2_0(\D).$  
So it remains to extend (\ref{e:Dynkin})  to the case where $f$ only  satisfies the growth assumption of  the lemma.

Let $(\xi_k)_{k=0}^\infty$ be a  sequence  of points in $\D$  constructed as follows.
Set $\xi_0:=0$ and $p_0:=0.$ For every $n\geq 1,$   suppose  that we have  already  defined
$\xi_j$ with  $j\leq p_{n-1},$ we  want to construct  an integer $p_n>p_{n-1}$ and  the new points  $\xi_j$ with  $p_{n-1}<j\leq p_n$
as follows. Let $p_n=1+[2\pi e^n]+p_{n-1}.$ Let  $\xi_{p_n}$ be  the unique common  point lying on both the positive real axis  of $\C$
and the   circle    $\partial \D(0,n).$ Consider the $(1+[2\pi e^n])$-sided regular polygon inscribed in the  circle $\partial \D(0,n)$
having  $\xi_{p_n}$ as  a  vertex.
 Let  $ \xi_{p_n-1},\ldots,\xi_{p_{n-1}+1}$ be the remaining  vertices of this  polygon.
So   the Poincar\'e distance between  two  consecutive vertices of the polygon is $\leq  1$ since  the Poincar\'e length of  $\partial \D(0,n)$ is $2\pi e^n.$
Continuing this  process, we obtain  a  sequence  $(\xi_k)_{k=0}^\infty\subset \D.$ Note that  $\dist_P(\xi_{p_n},\xi_{p_{n+1}})=1$ for all $n\in\N.$ 

From  this  construction we make the following  observations:
\begin{itemize}
\item[$\bullet$] the family of balls  $(\D(\xi_k,4))_{k=0}^\infty$ is an open cover of $\D;$
\item[$\bullet$] there is  an integer $N>1$ such that  for every $a\in \D,$   the  cardinal of the set $\{k\in\N:\  a\in \D(\xi_k,8)\}$ is $\leq  N.$
\end{itemize}
In particular,  the family of balls  $(U_k:=\D(\xi_k,8))_{k=0}^\infty$ is locally finite in $\D.$
Fix  a smooth compactly  supported  function $\psi:\ \D(0,8)\to [0,1]$  such that
$\psi =1$ on  $\D(0,4).$ For $k\in\N$ fix  an automorphism $\tau_k$ of $\D$ which  sends $\xi_k$ to  $0.$
Consider  the  sequence of functions  $(\psi_k)_{k=1}^\infty$ defined by
$$
\psi_k:={ \psi\circ \tau_k\over  \sum_{j=0}^\infty   \psi\circ \tau_j}\qquad \textrm{on $U_k$.}  
$$
Using  the  above observations, we  see easily that  $\psi_k$ is a well-defined
function in $\Cc^\infty_0(U_k,[0,1])$ for each $k,$ and  $(\psi_k)_{k=1}^\infty$  is  a  partition of unity subordinate to the    cover
$(U_k)_{k=0}^\infty.$ Moreover, 
  there
is a global bound  $c_1>0$ such that for all $k\in\N,$
 \begin{equation}\label{e:global_bound_c_1}
 |d\psi_k|_P\leq  c_1 \quad\text{and} \quad |\Delta\psi_k|\leq c_1.
 \end{equation}
  For each $k\in\N,$  let $f_k$  be the  function in $ \Cc^2_0(\D)$    defined by
 \begin{equation}\label{e:f_k}
  f_k:=\sum_{j=1}^k\psi_i f.
  \end{equation}
  By  (\ref{e:Dynkin}), we have that
\begin{equation} \label{e:Dynkin_index}
(D_tf_k)(\xi) - f_k(\xi)=\int_0^t   (D_s \Delta f_k) (\xi) ds,\qquad t\in\R^+,\ \xi\in\D.
\end{equation}
On the one hand, 
$f_k\to f$ uniformly on compact subsets of $\D$ as  $k\to\infty$  and  $|f_k|\leq  |f|.$  Therefore, we can show  that 
the left hand  side of (\ref{e:Dynkin_index}) tends to $(D_tf)(\xi) - f(\xi)$ uniformly on compact subsets as $k\to\infty.$

We now  examine  the right hand side of \eqref{e:Dynkin_index}. Using  a holomorphic automorphism of $\D$ sending $\xi$ to $0,$ we may suppose without loss of generality that
$\xi=0.$ Recall  the sample-path space $\Omega_0$ and  the  Wiener measure $W_0$  from Subsection \ref{ss:Wiener}.
As $k\to\infty,$ the functions $\Delta f_k$ converge to $\Delta f$ uniformly on compact sets, hence
$$
\int_0^t \Delta f_k(\omega(s))ds\to \int_0^t\Delta f(\omega(s))ds
$$
for each  path $\omega\in\Omega_0,$ since $\omega[0,t]$ is  compact. Thus, 
$\int_0^t \Delta f_k(\bullet(s))ds$ converge  pointwise to  $ \int_0^t\Delta f(\bullet(s))ds$ in $\Omega_0.$
Each  of the  functions
$$
\Omega_0\ni\omega\mapsto \int_0^t \Delta f_k(\omega(s))ds
$$
is  integrable with respect to  $W_0.$ Now  we show that the  convergence is also  dominated.
Indeed,
we infer easily from (\ref{e:global_bound_c_1}) and (\ref{e:f_k}) 
that
  \begin{equation*}
|\Delta f_k|\leq c_2|f| +|\Delta f|+ c_2|df|_P \quad\textrm{for some $c_2>0$ and for all $k\geq 1.$}
\end{equation*}
This  implies that for every $\omega\in\Omega_0,$
\begin{equation}\label{e:Delta_fk}
 \begin{split}
\big|\int_0^t \Delta f_k(\omega(s))ds\big| &\leq  \int_0^t |\Delta f_k|(\omega(s))ds\\
&\leq  c_2 \int_0^t | f|(\omega(s))ds+\int_0^t |\Delta f|(\omega(s))ds+c_2 \int_0^t |d f|_P(\omega(s))ds.
\end{split}
\end{equation}
Using  the  moderateness of $f,$  $\Delta f$ and  $|df|_P,$   we will show that  each term on the right-hand side
of \eqref{e:Delta_fk} is  integrable  with respect to $W_0.$ Indeed,   the  moderateness of $f$  says that  the  first term is   bounded from above by
$$
 \int_0^t \exp{\big( c'+c'\dist_P(\omega(t),0 )\big)}ds  \quad\textrm{for some $c'>0$ depending only on $f.$}
$$
It has been shown in Remark \ref{R:integrability} that
$$
\int_{\Omega_0}\exp{\big( c'+c'\dist_P(\omega(t),0 )\big)}dW_0(\omega)<\infty,
$$
hence, by Fubini's theorem, 
the function
 $$\Omega_0\ni\omega\mapsto \int_0^t | f|(\omega(s))ds
$$
is integrable with respect to $W_0.$
Similarly,  we  can show that the remaining  two  functions  (of $\omega$) on the right-hand side
of  \eqref{e:Delta_fk} is  integrable  with respect to $W_0.$  
Consequently, by  Lebesgue's dominated convergence,
$$
\E_0\big[ \int_0^t \Delta f_k(\bullet(s))ds  \big]\to \E_0\big[ \int_0^t \Delta f(\bullet(s))ds  \big] \quad\textrm{as}\quad k\to\infty. 
$$
Putting this  together with  \eqref{e:expectation}  and \eqref{e:expectation_vs_diffusion}, it follows  that 
$\int_0^t   (D_s \Delta f_k) (0) ds$ converge to  $\int_0^t   (D_s \Delta f) (0) ds$ as $k\to\infty.$
Thus, the  right-hand side of \eqref{e:Dynkin_index} with $\xi=0$ converges to  $\int_0^t   (D_s \Delta f) (0) ds$ as $k\to\infty.$
This, combined with the  convergence of its left-hand side which has been previously  shown,   completes the proof of the  lemma for $\xi=0,$
and hence for every $\xi\in\D.$

 \end{proof}

 \begin{lemma}\label{L:strongly_moderate}
 Let $\mathcal A$ be a  strongly  moderate cocycle. Then   
  there  is   a constant $c>0$ 
 such that
 for every $(x,u)\in X\times\P(\K^d),$ the  specialization $f:=f_{x,u}$ satisfies  the following two conditions:
\begin{itemize}
\item [$\bullet$] both $f$ and $|df|_P$  are  moderate functions  with constant $c;$
\item [$\bullet$]    $|\Delta f|\leq  c$ on $\D.$  
\end{itemize} 
Moreover,   $\mathcal A$ is uniformly Lipschitz.
 \end{lemma}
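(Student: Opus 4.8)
Two of the three asserted bounds are built into the hypotheses: since $\mathcal A$ is moderate, the specialization $f:=f_{x,u}$ is a moderate function with the universal constant of Definition~\ref{D:moderate_cocycle}, and since $\mathcal A$ is strongly moderate one has $|\Delta f|\leq c$ on $\D$ by the very definition of that class. So the real task is to establish a \emph{uniform} gradient bound
$$|df_{x,u}|_P(\xi)\leq C\qquad\text{for all }\xi\in\D\text{ and all }(x,u)\in X\times\P(\K^d),$$
with $C$ depending only on $\mathcal A$ (note $f_{x,u}\in\Cc^2(\D)$ because $\mathcal A$ is leafwise $\Cc^2$-differentiable, so $|df_{x,u}|_P$ is a well-defined continuous function on $\D$). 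Granting this, everything follows: a function bounded by $C$ is moderate in the sense of Definition~\ref{D:moderate_function}, so $|df_{x,u}|_P$ is moderate; and integrating $df_{x,u}$ along the unit-speed Poincar\'e geodesic joining $y$ to $z$ gives $|f_{x,u}(y)-f_{x,u}(z)|\leq C\,\dist_P(y,z)$, i.e.\ $\mathcal A$ is Lipschitz (indeed uniformly Lipschitz, with constant $C$).

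The bound at the \emph{origin} is pure interior elliptic regularity. On the fixed relatively compact smooth domain $\D(0,2)\subset\D$, the Green representation (harmonic extension of boundary values plus Green potential of the Laplacian), together with the interior gradient estimate for harmonic functions and the integrability of the gradient of the Green kernel in dimension $2$, yields an absolute constant $C_0>0$ with
$$|dh|_P(0)\leq C_0\Big(\sup_{\D(0,2)}|h|+\sup_{\D(0,2)}|\Delta h|\Big),\qquad h\in\Cc^2(\D).$$
Apply this to an arbitrary specialization $g$ of $\mathcal A$: by (\ref{e:varphi_n_diffusion}) one has $g(0)=0$, so moderateness with constant $c$ forces $|g(\zeta)|=|g(\zeta)-g(0)|\leq e^{\,c\dist_P(0,\zeta)+c}\leq e^{3c}$ on $\D(0,2)$, while $|\Delta g|\leq c$ by strong moderateness; hence $|dg|_P(0)\leq C_0(e^{3c}+c)=:C$.

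It remains to transfer this bound from $0$ to an arbitrary $\xi\in\D$. One cannot simply reuse the elliptic estimate on the ball $\D(\xi,2)$, since $\sup_{\D(\xi,2)}|f_{x,u}|$ genuinely grows as $\dist_P(0,\xi)\to\infty$; instead one re-centers on the lamination side. Fix $M\in\Aut(\D)$ with $M(0)=\xi$ and put $y:=\phi_x(\xi)\in L_x$, so that $\phi_y:=\phi_x\circ M$ is a universal covering of $L_y=L_x$ with $\phi_y(0)=y$. Taking $v$ to be the transport of $u$ along a leafwise path from $0$ to $\xi$, the conversion rule (\ref{e:change_spec}) (cf.\ Remark~\ref{r:cocycles}(3)) gives $f_{y,v}=f_{x,u}\circ M+c_0$ on $\D$ for a constant $c_0\in\R$; since $M$ is a $g_P$-isometry, $|df_{y,v}|_P(0)=|df_{x,u}|_P(M(0))=|df_{x,u}|_P(\xi)$. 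As $f_{y,v}$ is itself a specialization of $\mathcal A$, the previous paragraph gives $|df_{x,u}|_P(\xi)=|df_{y,v}|_P(0)\leq C$, and since $(x,u,\xi)$ was arbitrary the uniform gradient bound is proved; this finishes the proof.

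The one point requiring genuine care is the re-centering step together with the precise form of the conversion rule: one must verify that post-composing the universal covering $\phi_x$ with an automorphism $M\in\Aut(\D)$ turns the specialization $f_{x,u}$ into the specialization $f_{y,v}$ (for the appropriate $v$) up to an additive constant — this is where the homotopy and multiplicative laws of Definition~\ref{D:cocycle} are used — after which the fact that $\Aut(\D)$ acts by Poincar\'e isometries does the rest. Everything else (the elliptic estimate on a single fixed ball, and the integration along a geodesic) is routine.
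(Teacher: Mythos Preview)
Your proof is correct and follows essentially the same strategy as the paper's: both reduce to a uniform bound on $|df_{x,u}|_P(0)$ via an interior elliptic estimate (you invoke Green representation on $\D(0,2)$; the paper writes out the Riesz/Poisson--Jensen formula on a Euclidean disc $\{|z|<r\}$ and differentiates), using $f_{x,u}(0)=0$ plus moderateness to bound $f_{x,u}$ near $0$ and the hypothesis $|\Delta f_{x,u}|\le c$ for the potential term. The transfer from $0$ to an arbitrary $\xi$ is also the same idea --- the paper compresses it into a reference to Remark~\ref{r:cocycles}(3), while you spell out the automorphism $M\in\Aut(\D)$ and the resulting relation $f_{y,v}=f_{x,u}\circ M+\text{const}$; your explicit version is the correct unpacking of what that remark means for the gradient.
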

 \begin{proof}
 By  Definition \ref{D:moderate_cocycle}, 
there is a  constant $c_1>0$ such that  
\begin{equation}\label{e:constant_c_1}
f\quad\text{is a  moderate function  with constant $c_1$
and}\quad |\Delta f|\leq  c_1\quad\text{on $\D.$}
\end{equation}
To complete the proof of the lemma, it suffices to show that $|df|_P\leq  c$ for some constant $c>c_1$ large enough.
By Item 3. in Remark \ref{r:cocycles}, it is sufficient to show that $|df(0)|_P\leq  c.$

Fix  an arbitrary $0<r<1.$ By  Riesz representation formula for the disc $\{z\in\C: |z| < r\}$ gives
for $|z| < r,$
\begin{equation}\label{e:Riesz}
f(z) = {1\over 2\pi}\int_{\zeta\in\C:\ |\zeta|<r} \log {r|z-\zeta|\over |r^2 -z\bar\zeta|} (\Delta f) g_P +{1\over 2\pi}\int_{0}^{2\pi} {1-|z/r|^2\over |e^{i\theta}-z/r|^2} f(re^{i\theta})d\theta.
\end{equation}
We deduce from  \eqref{e:constant_c_1}  that there is a constant $c_2>c_1$ depending only on $c_1$ and $r$ such that $|f(z)|<c_2$ and 
 that $|\Delta f(z)|<c_2$
 for all $|z|<r.$
Using this and  performing  the  derivative  of the  right hand side of \eqref{e:Riesz} with respect to $z,$
we obtain  that $|df(0)|_P\leq  c$ for some constant $c>c_2$ depending only on $c_2$ and $r.$     
  \end{proof}

\begin{lemma}\label{L:key}
Let $f\in\Cc^2(\D)$  be such that  both $f$ and  $|df|_P$   are moderate functions on $\D$
and that  $\Delta f$ is bounded on $\D.$
Then for every $R>1,$
$$
\int_0^1 f(r_Re^{2\pi i\theta})d\theta = (D_{[R]}f)(0)  +O(R^{1/2}\sqrt{\log R}),
$$
where $r_R$ is calculated  according to the conversion rule (\ref{e:change_radius})  and  $[R]$ is the integer part of $R.$
\end{lemma}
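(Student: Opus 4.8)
The plan is to rewrite both sides of the asserted identity as \emph{radial averages} and to compare them by means of the linear escape rate of leafwise Brownian motion. For $\rho>0$ put $m(\rho):=\int_0^1 f(r_\rho e^{2\pi i\theta})\,d\theta$, the average of $f$ over the geodesic circle of Poincar\'e radius $\rho$ centred at $0$ in $(\D,g_P)$ (here $r_\rho$ is as in \eqref{e:change_radius}); the left‑hand side of the lemma is $m(R)$. Since the rotations fixing $0$ are isometries of $(\D,g_P)$, the heat kernel $\bfp(0,\cdot,t)$ is a function of $\dist_P(0,\cdot)$ alone, so the law of $\omega(t)$ under $W_0$ is a mixture of uniform measures on geodesic circles; writing $\kappa_t$ for the distribution of $\dist_P(\omega(t),0)$ under $W_0$, the identity \eqref{e:expectation_vs_diffusion} together with the polar decomposition of $g_P$ gives
\[
(D_{[R]}f)(0)=\E_0\big[f(\omega([R]))\big]=\int_0^\infty m(\rho)\,d\kappa_{[R]}(\rho),
\]
all integrals being absolutely convergent because $f$ is of moderate growth, and the relevant identities extending from compactly supported to moderate $f$ by the truncation argument of Lemma \ref{L:D_t_Delta} together with Lemma \ref{L:sup_integrability}. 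Consequently
\[
m(R)-(D_{[R]}f)(0)=\int_0^\infty\big(m(R)-m(\rho)\big)\,d\kappa_{[R]}(\rho),
\]
and it remains to control the increments of $m$ and the spread of $\kappa_{[R]}$.

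The second ingredient is the \emph{uniform} Lipschitz estimate $|m'(\rho)|\le c\,\|\Delta f\|_\infty$ valid for all $\rho>0$, with $c$ depending only on the normalisation of $g_P$. One obtains it by averaging the identity $(\Delta f)\,g_P=i\ddbar f$ over the angular variable in geodesic polar coordinates $(\rho,\psi)$: the angular part of the Laplacian integrates to $0$ over the circle, leaving the ordinary differential equation
\[
\big((\sinh\rho)\,m'(\rho)\big)'=c\,(\sinh\rho)\,\overline{\Delta f}(\rho),\qquad \overline{\Delta f}(\rho):=\int_0^1(\Delta f)(r_\rho e^{2\pi i\theta})\,d\theta,
\]
together with $\lim_{\rho\to0^+}(\sinh\rho)\,m'(\rho)=0$. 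Integrating from $0$ and using $|\overline{\Delta f}|\le\|\Delta f\|_\infty$ gives $|m'(\rho)|\le c\,\|\Delta f\|_\infty\,(\cosh\rho-1)/\sinh\rho$, and the point is that the geometric factor $(\cosh\rho-1)/\sinh\rho$ stays bounded — indeed is $<1$ — uniformly in $\rho>0$. This is a genuinely pointwise‑in‑$\rho$ estimate: bounding $m(R)-f(0)$ crudely by the integral of $|\Delta f|$ over the whole Poincar\'e ball $\D_R$ would only yield something growing like $\mathrm{area}(\D_R)\sim e^R$. (The hypotheses that $f$ and $|df|_P$ be moderate are what make the differentiation under the integral and the passage to moderate data legitimate, as in Lemma \ref{L:D_t_Delta}.)

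Combining the two ingredients,
\[
\big|m(R)-(D_{[R]}f)(0)\big|\le c\,\|\Delta f\|_\infty\int_0^\infty|R-\rho|\,d\kappa_{[R]}(\rho)\le c\,\|\Delta f\|_\infty\Big(\E_0\big[\,|\dist_P(\omega([R]),0)-[R]|\,\big]+1\Big),
\]
using $|R-[R]|<1$. The main obstacle is thus the remaining estimate
\[
\E_0\big[\,|\dist_P(\omega(n),0)-n|\,\big]=O\big(\sqrt{n\log n}\big),
\]
a quantitative form of the fact that leafwise Brownian motion escapes at unit speed. This is where the delicacy lies: it follows from the boundary‑behaviour estimates of Cranston \cite{Cranston} and Lyons \cite{Lyons} — the very ingredients already underlying Lemma \ref{L:Ancona}(iii) — together with their associated tail bounds; alternatively, and this is the route we shall actually follow, the whole comparison of the geodesic‑circle average $m(R)$ with the time‑$[R]$ heat diffusion can be carried out directly through the heat‑kernel arguments developed in the proof of the geometric Birkhoff ergodic theorem in \cite{DinhNguyenSibony1}. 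Either way one arrives at $m(R)-(D_{[R]}f)(0)=O(R^{1/2}\sqrt{\log R})$, the implied constant depending only on $\|\Delta f\|_\infty$ and the moderateness constants of $f$ and $|df|_P$, which is exactly the assertion of the lemma.
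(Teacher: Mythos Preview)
Your approach is genuinely different from the paper's, and the Lipschitz bound $|m'(\rho)|\le c\|\Delta f\|_\infty$ via the radial ODE for the spherical mean is a clean observation. The paper instead uses the Riesz/Poisson--Jensen formula to write $m(R)-f(0)=\frac{1}{2\pi}\int\log^+\frac{r_R}{|\zeta|}(\Delta f)\,g_P$, then invokes a ready-made heat-kernel estimate from \cite[proof of Lemma~7.6]{DinhNguyenSibony1} comparing this logarithmic integral with $\int_0^{M_R/2\pi}(D_t\Delta f)(0)\,dt$ up to an error $O(R^{1/2}\sqrt{\log R})\|\Delta f\|_\infty$, and finally applies Dynkin's formula (Lemma~\ref{L:D_t_Delta}) to convert $\int_0^{[R]}(D_t\Delta f)(0)\,dt$ into $(D_{[R]}f)(0)-f(0)$. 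Your route through the radial law $\kappa_{[R]}$ plus the Lipschitz bound on $m$ is more probabilistically transparent; the paper's route stays on the potential-theoretic side and imports a black-box inequality from \cite{DinhNguyenSibony1}.

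That said, there is a genuine gap at your final step. The $L^1$ estimate $\E_0\big[\,|\dist_P(\omega(n),0)-n|\,\big]=O(\sqrt{n\log n})$ is asserted but not proved, and the references you cite do not give it directly: what the paper extracts from Cranston and Lyons (recorded in Lemma~\ref{L:Ancona}(iii)) is an \emph{almost-sure} bound with a path-dependent constant $c_\omega$, and converting this to an $L^1$ bound requires integrability/tail control of $c_\omega$, which you allude to but do not establish. Your fallback --- ``the route we shall actually follow'', appealing to the heat-kernel arguments of \cite{DinhNguyenSibony1} --- is left entirely unexecuted; and if you have \cite[Lemma~7.6]{DinhNguyenSibony1} in mind, note that it compares weighted spatial averages of a bounded function against time-averaged diffusions, which is not the same as a moment bound on the radial distance and does not immediately yield your $L^1$ estimate. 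The bound you need is in fact true (for instance via the SDE for the radial process $\rho_t=\dist_P(\omega(t),0)$: the drift $\coth\rho_t$ converges to $1$ exponentially fast so its contribution is $O(1)$, and the martingale part is $O(\sqrt t)$ in $L^1$), but as written your argument stops precisely at the point you yourself flag as delicate.
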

\begin{proof}
By  Riesz representation formula we have that
\begin{equation}\label{e:Poisson-Jensen}
\int_0^1 f(re^{2\pi i\theta})d\theta -f(0)={1\over 2\pi}\int  \log^+{r\over |\zeta|}(\Delta f) g_P,
\end{equation}
where   
 $\log^+:=\max\{\log,0\}.$  For $R>0$ let
$$M_R:= \int_\D \log^+ \frac{r}{|\zeta|}g_P=\int_\D \log^+ \frac{r}{|\zeta|} \frac{2}{(1-|\zeta|^2)^2}
id\zeta\wedge d\overline\zeta.$$
Recall   from the proof of  \cite[Lemma 7.6]{DinhNguyenSibony1} that  there is a  constant $c>0$ such that  the following   estimate holds
$$
\left |{1\over  M_R}\int  \log^+{r\over |\zeta|} ug_P -{2\pi\over M_R}\int_0^{M_R\over 2\pi} (D_tu)(0)dt
\right|\leq cR^{-1/2}\sqrt{\log R}\|u\|_\infty
$$
for all $R\in\R^+$ and all bounded  measurable  functions $u$ on $\D.$ Since  $\Delta f$  is  bounded,
the  above   inequality, applied to $\Delta f,$ gives that
$$
\left |{1\over  M_R}\int  \log^+{r\over |\zeta|} (\Delta f)g_P -{2\pi\over M_R}\int_0^{M_R\over 2\pi} (D_t\Delta f)(0)dt
\right|\leq cR^{-1/2}\sqrt{\log R}\|\Delta f\|_\infty
$$
Inserting  (\ref{e:Poisson-Jensen}) into the  first term of the left hand  side, we get that
$$
\left |{2\pi\over M_R} \left(\int_0^1 f(re^{2\pi i\theta})d\theta -f(0)\right)-{2\pi\over M_R}\int_0^{M_R\over 2\pi} (D_t\Delta f)(0)dt
\right|\leq cR^{-1/2}\sqrt{\log R}\|\Delta f\|_\infty.
$$
Moreover, a direct computation shows  that $|M_R- 2\pi R|$ is bounded by a constant  and  it is  clear that $|R-[R]|<1.$ Putting  this  together with  the  estimate    
  $\|D_t\Delta f\|_\infty\leq \|\Delta f\|_\infty<\infty$ for all $t\in\R^+,$ we infer from  the  last line  that
  $$
  \left |{1\over R} \left(\int_0^1 f(re^{2\pi i\theta})d\theta -f(0)\right)-{1\over [R]}\int_0^{[R]} (D_t\Delta f)(0)dt
\right|\leq cR^{-1/2}\sqrt{\log R}\|\Delta f\|_\infty.
  $$
  Applying Lemma \ref{L:D_t_Delta} to the second term on the left  hand side yields that
   $$
  \left |{1\over R} \left(\int_0^1 f(re^{2\pi i\theta})d\theta -f(0)\right)-{1\over [R]}\left ((D_{[R]}f)(0)-f(0)\right)
\right|\leq cR^{-1/2}\sqrt{\log R}\|\Delta f\|_\infty.
  $$
  The proof is  thereby completed.  
\end{proof}

Now  we  are in the position  to  complete the proof of the Main Theorem.
\\
{\bf End of the proof of assertion (ii) of Theorem
 \ref{T:main}.}  
 The proof is  divided into two steps.
 Let $Y_0$ be  the Borel  set of full $\mu$-measure  given by  Theorem \ref{T:expectation}. 
  
  \noindent {\bf Step 1:} {\it Identity (\ref{e:Lyapunov_geometric_hard}) (namely, 
$
 \lim_{R\to\infty}\Ec(x,H_i(x),R)= \chi_i
$)
 holds for each $x\in Y_0$ and for each $1\leq i\leq m.$}
 
 Fix  an index $1\leq i_0\leq m$   and a point $x_0\in Y_0.$
  Let  $\phi_{x_0}:\ \D\to L=L_{x_0}$ be the universal covering map given in  (\ref{e:covering_map}). 
For each  vector $v\in H_{i_0}(x_0)\setminus\{0\},$ let $f_v$  be  the  specialization  of $\mathcal A$ at $(x_0,[v])$ (see formula (\ref{e:specialization})).
 By (\ref{e:varphi_n_diffusion}), we have that
 $$ \E_{x_0}\left[\log {\|\mathcal A(\bullet,R)v\|  \over \|v\|  } \right ] =  
 (D_R f_v)(0),\qquad R>0.
$$ 
On the  other hand,  since $x_0\in Y_0,$ Theorem \ref{T:expectation} tells us that
 $$
 \lim_{n\to\infty}{1\over  n}\inf_{v\in H_{i_0}(x_0)\setminus \{0\}}\E_{x_0}\left[   \log {\| \mathcal A(\bullet,n)v   \|\over  \| v\|}\right] = \lim_{n\to\infty}{1\over  n}\sup_{v\in H_{i_0}(x_0)\setminus \{0\}}\E_{x_0}\left[   \log {\| \mathcal A(\bullet,n)v   \|\over  \| v\|}\right] =       \chi_{i_0} .$$
 Therefore, we  deduce  from the last two lines that
 \begin{equation}\label{e:convergence_D}
\lim_{R\to\infty} \inf_{v\in H_{i_0}(x_0)\setminus \{0\}}{1\over  R}  (D_{[R]} f_v)(0) = \chi_{i_0}
=  \lim_{R\to\infty} \sup_{v\in H_{i_0}(x_0)\setminus \{0\}}{1\over  R} (D_{[R]} f_v)(0) .
 \end{equation}
 Since $\mathcal A$ is  strongly moderate, Lemma \ref{L:strongly_moderate} says that $f_v$ satisfies the assumption of   Lemma \ref{L:key}. Consequently, 
using  this lemma  the last estimate implies that
$$
\inf_{v\in H_{i_0}(x_0)\setminus \{0\}}{1\over R}\int_0^1 f_v(r_Re^{2\pi i\theta})d\theta    = \chi_{i_0}
= \sup_{v\in H_{i_0}(x_0)\setminus \{0\}}{1\over R} \int_0^1 f_v(r_Re^{2\pi i\theta})d\theta  .
$$
Hence,    $\Ec(x_0,H_{i_0}(x_0),R)\to\chi_{i_0}$ as $R\to\infty.$   Step  1 is  thereby completed.

  \noindent {\bf Step 2:} {\it  There exists a leafwise saturated  Borel  set $Y\subset X$ of full $\mu$-measure such that 
$
 \lim_{R\to\infty}\Ec(x,H_i(x),R)= \chi_i
$
 for each $x\in Y$ and  each $1\leq i\leq m.$}

 Let  $Y$ be the  saturation  of $Y_0,$  that is, $Y:=\bigcup_{x\in Y_0}L_z.$ Since $Y_0$ is  of full $\mu$-measure, so is $Y.$ 
 By shrinking $Y$ a  little, we may assume that $Y$ is  leafwise saturated Borel set of full $\mu$-measure. 
 Fix  an index $i_0$   and a point $x_1\in Y.$ 
We  need to show that $
 \lim_{R\to\infty}\Ec(x_1,H_{i_0}(x_1),R)= \chi_{i_0}.
$
 Let  $\phi_{x_1}:\ \D\to L=L_{x_1}$ be the universal covering map given in  (\ref{e:covering_map}). 
 Pick a  point  $x_2\in L\cap Y_0.$ Pick $\xi_2\in \phi_{x_1}^{-1}(x_2).$  Fix a  path $\omega\in\Omega_0$  such that $\omega(1) =\xi_2.$
  For each $v\in  H_{i_0}(x_1),$  we set $u_v:=\mathcal A(\phi_{x_1}\circ \omega,1)v\in H_{i_0}(x_2),$
and let $f_{1,v}$  (resp. $f_{2,v}$) be the specialization of $\mathcal A$ at $(x_1,[v])$ (resp. at $(x_2,[u_v])$).
 
Since  $x_2\in Y_0$ and $   H_{i_0}(x_1)\ni v\mapsto u_v\in  H_{i_0}(x_2)$ is an isomorphism, we infer from identity (\ref{e:convergence_D}) applied to $x_2$ that
 \begin{equation}\label{e:convergence_D_new}
\lim_{R\to\infty} \inf_{v\in H_{i_0}(x_1)\setminus \{0\}}{1\over  R}  (D_{[R]} f_{2,v})(0) = \chi_{i_0}
=  \lim_{R\to\infty} \sup_{v\in H_{i_0}(x_1)\setminus \{0\}}{1\over  R} (D_{[R]} f_{2,v})(0) .
 \end{equation}
  Recall from identity (\ref{e:change_spec}) and the expression  of $u_v$ in terms of $v$    that
\begin{equation}\label{e:f_u_x_2}
f_{2,v}(\xi)= f_{1,v}(\xi)- f_{1,v}(\xi_2),\qquad \xi\in\D,\ v\in H_{i_0}(x_1).  
\end{equation}
Inserting this  into  (\ref{e:convergence_D_new}), we get that
\begin{equation*} 
\lim_{R\to\infty} \inf_{v\in H_{i_0}(x_1)\setminus \{0\}}{1\over  R}  (D_{[R]} f_{1,v})(0) = \chi_{i_0}
=  \lim_{R\to\infty} \sup_{v\in H_{i_0}(x_1)\setminus \{0\}}{1\over  R} (D_{[R]} f_{1,v})(0) .
 \end{equation*}
Using this   we argue  as   we  did     from  (\ref{e:convergence_D}) to the  end of the proof of Step 1. Consequently,
we  conclude that   $\Ec(x_0,H_{i_0}(x_1),R)\to\chi_{i_0}$ as $R\to\infty.$
So 
 the last  step and  hence  the proof of the Main Theorem   is  complete.
 \hfill $\square$


\section{Applications and concluding remarks}
  \label{S:applications} 
  
  Let $S$ be   a  compact  Riemann surface of genus $>1,$  $d\geq 1$ an integer, and $\K\in\{\R,\C\}.$
 Let   $\rho:\ \pi_1(S)\to \GL(d,\K)$ be  a representation; 
this  is  the same as   a local system  $H\to S$ over $S$ with fiber $\K^d.$ In fact, it is well-known that  a local system is equivalent
to a vector bundle endowed  with a flat connection.
For $x\in S$ denote by $H_x$ the  fiber at  $x$ of the local system.
Consider $S$ as  a  lamination  consisting of a single leaf and let $\Omega(S)$ be the sample-path space  associated to $S.$ 

For every  $\omega\in\Omega(S)$ and  $t\in \R^+$ and $v\in H_{\omega(0)},$
let $\hol_{\omega,t}v$ be the the image of $v$ in   $H_{\omega(t)}$ by the holonomy map via  parallel transport (with respect to the Gauss-Manin connection)
along  the path $\omega[0,t].$ 
 
We equip the vector bundle  $H\to S$ with a Riemannian (resp. Hermitian) metric $h.$   
An {\it identifier} $\tau$  of $H\to S$ is  a   smooth    map  which  associates to  each point $x\in X$
a linear   isometry $\tau(x):\ H_x \to \K^d,$ that is, a $\K$-linear morphism  
such that 
\begin{equation}\label{e:isometry}
 \|\tau(x)v\|=\|v\|_h,\qquad v\in  H_x,\ x\in S,
 \end{equation}
 where the norm in the left hand side is the Euclidean norm  (see \cite[Section 3.1]{NguyenVietAnh1}).
 The existence of such a map $\tau$ can be proved   using a partition of unity on $S.$
 
Consider  the  map
$\mathcal A:\ \Omega(S)\times\R^+\to \GL(d,\K)$    defined  as  follows.
$$
\mathcal A(\omega,t):=   \tau(\omega(t))\circ (\hol_{\omega,t})(\omega(0))\circ \tau^{-1}(\omega(0)), \qquad \omega\in \Omega,\ t\in\R^+.
$$ 
It  can be checked that  $\mathcal A$ is a  cocycle in the sense of Definition \ref{D:cocycle}. 
We say that 
$\mathcal A$ is  the {\it associated  cocycle } of  the representation $\rho$ and the identifier $\tau.$
Since $\mathcal A$ is  clearly $\Cc^2$-differentiable, we infer from  Proposition \ref{P:moderate_criterion} that
it is strongly  moderate.

On the  other hand, we deduce from  the assumption  on $S$ that
 the Poincar\'e metric $g_P$ on $S$ is  a	nonzero   finite measure.
So   in formula (\ref{E:harmonic_measure}) we  choose $T:=1$ and  hence $\mu=g_P$ is  an ergodic harmonic measure.   

Therefore, we are in the position to  apply Corollary \ref{C:main} to $\mathcal A.$ 
  Consequently,  we obtain the following result which  characterizes the Lyapunov exponents of $\mathcal A$ both
  dynamically and  geometrically.
 \begin{proposition}\label{P:representation}
   Let   $\rho:\ \pi_1(S)\to \GL(d,\K)$ be  a representation as  above and  $\mathcal A$ its associated  cocycle.
 Then  there exist   a number $m\in\N$  together with $m$ integers  $d_1,\ldots,d_m\in \N$  such that
the following properties hold:
\begin{itemize}
\item[(i)] For   each $x\in S$  
 there   exists a  decomposition of $\K^d$  as  a direct sum of $\K$-linear subspaces 
$$\K^d=\oplus_{i=1}^m H_i(x),
$$
 such that $\dim H_i(x)=d_i$ and  $\mathcal{A}(\omega, t) H_i(x)= H_i(\omega(t))$ for all $\omega\in  \Omega_x$ and $t\in \R^+.$  
 For each $1\leq i\leq m$ and each $x\in S,$ let $V_i(x):=\oplus_{j=i}^m H_j(x).$  Set $V_{m+1}(x)\equiv \{0\}.$

 \item[
(ii)]  There   are real numbers 
$$\chi_m<\chi_{m-1}<\cdots
<\chi_2<\chi_1$$
 such that    for  each $x\in S,$ there is a set $F_x\subset \Omega_x$ of full $W_x$-measure  such that for every  $1\leq i\leq m$ and  every  $v\in V_i(x)\setminus V_{i+1}(x)$
 and every  $\omega\in F_x,$
$$
\lim\limits_{t\to \infty, t\in \R^+} {1\over  t}  \log {\| \mathcal{A}(\omega,t)v   \|\over  \| v\|}  =\chi_i.  
$$
Moreover,    for  every $x\in S$  and  for   every  $\omega\in F_x,$ 
$$
\lim\limits_{t\to \infty, t\in \R^+} {1\over  t}  \log {\| \mathcal{A}(\omega,t)  \|}  =\chi_1    
$$
  \item[
(iii)]    For  each $x\in S,$ there is a set $G_x\subset [0,1)$ of full Lebesgue measure  such that 
     equalities  
 (\ref{e:Lyapunov_geometric})-(\ref{e:Lyapunov_geometric_max}) hold for all $\theta\in G_x.$

\item[(iv)]    For  each $x\in S,$
equality (\ref{e:Lyapunov_geometric_hard}) holds. 
\end{itemize}
Here    $\|\cdot\|$  denotes the standard   Euclidean norm of $\K^d.$  
\end{proposition}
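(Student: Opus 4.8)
The plan is to obtain Proposition \ref{P:representation} by specializing Corollary \ref{C:main} to the single-leaf lamination $X=S$, and then observing that in this case the leafwise saturated set of full measure produced by that corollary is necessarily all of $S$. The hypotheses of Corollary \ref{C:main} have essentially been arranged in the discussion preceding the statement: $(S,\Lc)$ is a compact $\Cc^2$-smooth hyperbolic Riemann surface lamination (as $S$ has genus $>1$); the associated cocycle $\mathcal A$ of $\rho$ and the identifier $\tau$ is $\Cc^2$-differentiable, hence, by Proposition \ref{P:moderate_criterion}, strongly moderate and uniformly Lipschitz; and the measure $\mu=T\wedge g_P$ associated by (\ref{E:harmonic_measure}) to the harmonic current $T:=1$ equals $g_P$, a nonzero finite measure which is harmonic (since $S$ is compact) and ergodic (since the only leafwise saturated Borel subsets of a single-leaf lamination are $\emptyset$ and $S$).

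The one hypothesis I would spell out is the extremality of $T=1$. A harmonic current on the single Riemann surface $S$ is nothing but a distribution $g$ on $S$ with $\ddbar g=0$ and $g\ge 0$; by Weyl's lemma and the maximum principle on the closed surface $S$ such a $g$ is a nonnegative constant, so the cone of harmonic currents on $S$ is the ray $\{c\,T:\ c\ge 0\}$, and hence $T$ is extremal. (Alternatively, by Theorem \ref{thm_harmonic_currents_vs_measures}(ii) it suffices to note that every quasi-harmonic measure on $S$ is a nonnegative multiple of $g_P$, which follows because such a measure annihilates the image of $\Delta$ on $\Cc^\infty(S)$, i.e.\ all smooth functions of zero $g_P$-mean, a sup-norm dense subspace of the mean-zero continuous functions.)

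Having checked the hypotheses, I would apply Corollary \ref{C:main}: it provides an integer $m$, integers $d_1,\dots,d_m$, reals $\chi_m<\cdots<\chi_1$, and a leafwise saturated Borel set $Y\subset S$ of full $T$-measure satisfying the conclusion of Theorem \ref{T:VA} together with equalities (\ref{e:Lyapunov_geometric})--(\ref{e:Lyapunov_geometric_max})--(\ref{e:Lyapunov_geometric_hard}). The crucial point --- and the reason Proposition \ref{P:representation} holds for \emph{every} $x\in S$ rather than for almost every $x$ --- is that $Y=S$: being leafwise saturated, $Y$ is $\emptyset$ or $S$, and as it has full, hence positive, $\mu$-measure while $\mu\ne 0$, necessarily $Y=S$. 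It then remains only to transcribe the conclusions over $Y=S$: item (i) is Theorem \ref{T:VA}(i) (the splitting $\K^d=\oplus_{i=1}^m H_i(x)$ with $\dim H_i(x)=d_i$, its holonomy invariance, and the flag $V_i(x)$); item (ii) is Theorem \ref{T:VA}(ii), with $F_x\subset\Omega_x$ the full-$W_x$-measure set furnished there; item (iii) is equalities (\ref{e:Lyapunov_geometric})--(\ref{e:Lyapunov_geometric_max}), with $G_x\subset[0,1)$ the full-Lebesgue-measure set of leaf-directions furnished there; and item (iv) is equality (\ref{e:Lyapunov_geometric_hard}). I do not anticipate any substantial obstacle: the only point requiring genuine verification is the extremality of $T=1$ above, everything else being a direct specialization of Corollary \ref{C:main} in which the full-measure leafwise saturated set is forced to coincide with the whole surface.
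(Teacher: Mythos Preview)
Your proposal is correct and follows exactly the paper's approach: the proof is the discussion immediately preceding the proposition, which verifies the hypotheses of Corollary~\ref{C:main} and applies it. Your additions---spelling out why $T=1$ is extremal and why the leafwise saturated full-measure set $Y$ must equal $S$ (since $S$ is a single leaf)---are useful clarifications that the paper leaves implicit but are precisely what makes the conclusions hold for \emph{every} $x\in S$.
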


The following decomposition  at  each  fiber of the  local  system $H\to S$
$$
H_x:=\oplus_{i=1}^m H_{i,x},  
$$
where $H_{i,x}:=\tau(x)^{-1} H_i(x),$ $x\in S,$ is called the {\it Oseledec decomposition} at $x$
of the representation $\rho.$ 
The set of numbers $\chi_m<\chi_{m-1}<\cdots
<\chi_2<\chi_1$ is called  the {\it Lyapunov spectrum} of $\rho.$  
  The decreasing  sequence  of  subspaces  of $H_x$ given by:
$$
\{0\}\equiv V_{m+1,x}\subset V_{m,x}\subset \cdots \subset V_{1,x}=H_x,
$$ 
where $V_{i,x}:=\tau(x)^{-1} V_i(x),$ $x\in S,$ 
is  called the {\it Lyapunov filtration}    at  $x$ of  $\rho.$ 
Notice  that  the compactness of $S$ and  the requirement (\ref{e:isometry}) imply that
   the Lyapunov spectrum,     the Oseledec decompositions as well as the Lyapunov filtrations  of $\rho$
are, in fact, independent of the choice of any metric $h$  as well as any   identifier $\tau.$ 

Now  we discuss  another   approach  to define  Lyapunov  exponents  of a linear representation
which has been used  by Bonatti,  G\'omez-Mont   and  many others
  (see  \cite{BGM} and the references therein).
This  approach relies on the geodesic flows. 

Let   $\rho:\ \pi_1(S)\to \GL(d,\K)$ be  a representation as  above, and $H\to S$
its  associated   local system. 
Let $T^1S$ be  the unit tangent bundle of $S$ and $\pi:\ T^1S\to S$  the natural projection. 
Each $y\in T^1S$ corresponds, in a  natural way,  to a unique  unit-speed geodesic ray $\gamma_{x,\theta},$ where $x:=\pi(y)$ and $\theta$ is the direction of $y$  at $x.$
 Endow  $T^1S$ with the Liouville measure $\mu.$ 
Under the    identification $y\equiv (x,\theta),$  $\mu$  may be written as the product  of the measure  $g_P$ for  $x\in S$ and the Lebesgue measure
  for  $\theta\in [0,1).$ This is  the product structure of the Liouville measure. 
Moreover, $\mu$ is 
 an invariant measure which is  ergodic  with respect to the  geodesic flow $(g_t)_{t\in\R^+}$ on $S.$
 Using  $\pi,$ we may  view  $H$ as a  local system over $T^1S$ whose  fiber at $y\in T^1S$ is  set to be simply the fiber $H_{\pi(y)}.$
 We make the  following   observation:
For each $y\in T^1S,$  $g_t(y):\ H_y\to H_{g_ty}$ is an invertible linear map between fibers.
Fix  an identifier $\tau$ and  a metric $h$ of $H\to  S$  as above and let   $\mathcal A$  be  the  associated  cocycle  of  the representation $\rho$ and the identifier $\tau.$  
Identifying the fibers of $H\to
T^1S\overset{\pi}{\to}  S$  with 
$\K^d$  using   $\tau,$  we get that
 \begin{equation}\label{e:coincidence}
\mathcal A(\gamma_{x,\theta},t)=g_t(y)
\end{equation}  
for  every  $t\in\R^+$ and  every unit-speed geodesic ray  $\gamma_{x,\theta}$  that represents $y\in T^1S.$  
Let  $\|g_t(y)\|$ be the norm of the linear map $g_t(y).$ 

Since $S$ is compact we see easily that
  $$
  \int_{T^1S} \sup_{t\in[-1,1]}  \|g_t(y)\|d\mu(y)<\infty.
  $$
  By the Oseledec multiplicative ergodic theorem (see \cite{KatokHasselblatt,Oseledec}), there exist numbers  $\lambda_1>\lambda_2>\cdots>\lambda_r,$
  called  {\it Lyapunov exponents}, and  a measurable  $g_t$-invariant decomposition of the bundle
\begin{equation}\label{e:decomposition_BGV}
H_y=\bigoplus_{i=1}^r H_y^{\lambda_i}
\end{equation}
such that  for $\mu$-almost every $y\in T^1S$ and for every $v\in  H_y^{\lambda_i},$ we have  the asymptotic  growth of norm
$$
\lim_{t\to\pm\infty} {1\over t} \log \|g_tv\|=\lambda_i.
$$
 This, combined  with (\ref{e:coincidence}) and  Proposition \ref{P:representation} (iii) and the product structure of the Liouville measure, implies that  
 $\{\lambda_1,\ldots,\lambda_r\}\equiv\{\chi_1,\ldots,\chi_m\}$ and the two Oseledec decompositions (namely,  
 Proposition \ref{P:representation} (i) and (\ref{e:decomposition_BGV}))  are the same. Consequently, we infer the following remarkable property. 
The  subspaces $H_y^{\lambda_i} $ in  the decomposition (\ref{e:decomposition_BGV}) depend only  on $x:=\pi(y);$
in particular, they  are independent of the  direction $\theta$  while  identifying $y$ with $(x,\theta).$

In summary, in this  particular  example,   our approach  and   the other  one 
using the  geodesic flows give the  same  Oseledec  decomposition.
 However,
 our approach   yields a stronger result. Namely,  the Oseledec decomposition is  holonomy  invariant
(see  Proposition \ref{P:representation} (i)), whereas the other  approach  only  tells us that the decomposition  (\ref{e:decomposition_BGV}) is  $g_t$-invariant.
 
We conclude the  article  with some  remarks and open questions.
\begin{remark}\rm
It  seems  interesting to relax  the conditions imposed on Theorem \ref{T:main}. More concretely, we have the
following three open questions.

\noindent {\bf Question 1.} Is  assertion (i) of Theorem \ref{T:main}  still true if the cocycle $\mathcal A$ is  H\"older of order $\alpha\geq 2$ ?

\noindent {\bf Question 2.}
Is  assertion (ii) of Theorem \ref{T:main} still valid if the  strong moderateness in  Definition   \ref{D:moderate_cocycle}  is weakened 
as  follows: a  cocycle 
$\mathcal A$ is called  {\it strongly  moderate}  if it is    leafwise $\Cc^2$-differentiable cocycle  and if there  is   a constant $c>0$ 
 such that
 for every $(x,u)\in X\times\P^{d-1}(\K),$ 
 both  $f_{x,u}$  and $\Delta f_{x,u}$ are  moderate functions  with constant $c.$  
 
\noindent {\bf Question 3.} Can one  apply the result (or at least the approach) developed in this  article  to the holonomy cocycle of the whole regular part
of a  singular holomorphic  foliation by hyperbolic Riemann surfaces ?  See  \cite{DinhNguyenSibony1,DinhNguyenSibony2,DinhNguyenSibony3,Glutsyuk,Neto,NguyenVietAnh2,NguyenVietAnh3}
for a  recent account on singular holomorphic  foliations.
 
We hope to be able to come back some of these issues  in forthcoming  works.
\end{remark}
 
\small

 \noindent
Vi{\^e}t-Anh Nguy{\^e}n,  
Universit\'e de Lille 1, 
Laboratoire de math\'ematiques Paul Painlev\'e, 
CNRS U.M.R. 8524,  
59655 Villeneuve d'Ascq Cedex, 
France.\\
{\tt Viet-Anh.Nguyen@math.univ-lille1.fr},
{\tt http://www.math.univ-lille1.fr/$\sim$vnguyen}

\end{document}